\tikzstyle{node small}=[fill=none, draw=none, shape=circle, inner sep=0.2]
\tikzstyle{bullet}=[fill=black, draw=black, shape=circle, scale=0.3]
\tikzstyle{bullet small}=[fill=black, draw=black, shape=circle, scale=0.14]
\tikzstyle{spin}=[->-]
\tikzstyle{bullet mid}=[fill=black, draw=black, shape=circle, scale=0.2]
\tikzstyle{red}=[draw=red, ->]
\tikzstyle{red 1}=[draw=red, <-]
\tikzstyle{black arrow}=[->]
\tikzstyle{dashed}=[-, draw=black, densely dotted]
\tikzstyle{Red-Thick}=[-, draw=red, thick]
\tikzstyle{spin}=[-, postaction={{decorate}}]
\tikzstyle{Blue-Thick}=[-, fill=none, draw=blue, thick]
\tikzstyle{extend-edge}=[-, dashed]
\tikzstyle{blue-double}=[-, draw=blue, thick, double]
\renewcommand{\l}[2]{\lambda_{#1#2}}
\DeclareMathOperator{\osp}{Osp}
\DeclareMathOperator{\ber}{Ber}
\DeclareMathOperator{\st}{st}
\DeclareMathOperator{\id}{id}
\newcommand{\ospmatrix}[9]{
\left(\begin{array}{cc|c}
#1&#2& #3 \\
#4 &#5& #6 
\\
\hline
#7 &#8 &#9
\end{array}
\right)
}
\newlength\friezelen 
\tikzset {->-/.style={decoration={markings, mark=at position .5 with {\arrow{latex}}}, postaction={decorate}}}
\tikzset {-->-/.style={decoration={markings, mark=at position .5 with {\arrow[scale=2]{latex}}}, postaction={decorate}}}
\newcommand{\midarrow}{\tikz \draw[-triangle 90] (0,0) -- +(.1,0);}
\patchcmd{\@settitle}{\uppercasenonmath\@title}{}{}{}
\patchcmd{\@setauthors}{\MakeUppercase}{}{}{}
\patchcmd{\section}{\scshape}{}{}{}
\@date \else {\vskip2ex 
  \centering\footnotesize\@date\par\vskip1ex}\fi
\else \@footnotetext{\@setdate}\fi}
    \DeclareFontFamily{U}{wncy}{}
    \DeclareFontShape{U}{wncy}{m}{n}{<->wncyr10}{}
    \DeclareSymbolFont{mcy}{U}{wncy}{m}{n}
    \DeclareMathSymbol{\Sha}{\mathord}{mcy}{"58}
\renewcommand{\l}[2]{\lambda_{#1,#2}}
\newcommand{\td}[3]{\bigtriangledown^{#1}_{#2,#3}}
\newcommand{\Td}[0]{\bigtriangledown}
\theoremstyle{plain}
\newtheorem{theorem}{Theorem}[section]
\newtheorem{lemma}[theorem]{Lemma}
\newtheorem{prop}[theorem]{Proposition}
\newtheorem{conj}[theorem]{Conjecture}
\newtheorem{question}[theorem]{Question}
\newtheorem{corollary}[theorem]{Corollary}
\theoremstyle{definition}
\newtheorem{remark}[theorem]{Remark}
\newtheorem{example}[theorem]{Example}
\newtheorem{definition}[theorem]{Definition}
\title[]{Super Markov Numbers and Signed Double Dimer Covers} 
\author{Gregg Musiker}
\thanks{School of Mathematics, University of Minnesota, Minneapolis, MN 55455, USA}
\thanks{\emph{Email}:
\href{mailto:musiker@umn.edu}{musiker@umn.edu}}
\date{\today}
\begin{document}

\begin{abstract}
We provide a superalgebraic analogue of Markov numbers, which are defined as the Grassmann integer solutions to the equation $x^2 + y^2 + z^2 + (xy + yz + xz)\epsilon = 3(1+\epsilon)xyz$, as well as applications to the Decorated Super Teichm\"uller spaces associated to the once-punctured torus and certain annuli.  We conclude with further directions for study.
\end{abstract}

\maketitle
\setcounter{tocdepth}{1}
\tableofcontents
\setlength{\parindent}{0em}
\setlength{\parskip}{0.618em}

\tableofcontents

\section{Introduction}

Previous work by the author, completed jointly with N. Ovenhouse and S. Zhang \cite{moz21,moz22,moz22b}, studied non-commutative deformations of formulas arising from hyperbolic geometry and low dimensional topology, focusing on the cases of triangulated polygons.  More precisely, we presented a combinatorial formula for super $\lambda$-lengths of arcs in a decorated super-Teichm\"uller space, as defined by R. Penner and A. Zeitlin \cite{pz_19}, for the case associated to a marked disk in terms of double dimer covers of snake graphs on the one hand, and in terms of associated holonomy matrices in the supergroup $\osp(1|2)$ on the other.  (In \cite{ip2018n}, I. Ip, R. Penner, and A. Zeitlin generalize the constructions of \cite{pz_19} to higher rank and provide further details on spin structures.)  This research also led to the construction of super Fibonacci numbers associated to arcs in certain annuli, which we now extend in two different directions.  

On the one hand, the sequence of every-other Fibonacci number is an instance of a Markov number, one of the three coordinates $(x,y,z)$ comprising an integer solution to the Markov equation $x^2 + y^2 + z^2 = 3xyz$.  We consider a $\mathbb{Z_2}$-graded analogue of this equation, i.e. a super version, and its solutions which includes the super Fibonacci numbers.  Such solutions, in analogy with the classical Markov numbers, can also be interpreted as super $\lambda$-lengths for arcs of rational slope on a once-punctured torus, and we use holonomy matrices to extend our previous constructions for combinatorial formulas to obtain results for super Markov numbers.  We note that to include more general cases, we no longer have exclusively manifestly positive expansion formulas but instead express our formulas as signed sums of double dimer covers of snake graphs.  

In a second direction, we extend our results on super Fibonacci numbers, which used annuli with one marked point on each boundary, to introduce a family of super-integrable systems obtained from the decorated super-Teichm\"uller space associated to a more general marked annuli which analogously admits combinatorial formulas in terms of signed double dimer covers.

Our original work providing combinatorial formulas \cite{moz21,moz22} was constrained in the case of polygons or in cases where there was a consistent way to assign a canonically defined \emph{default orientation} to a triangulation, a \emph{positive ordering} of the associated odd variables ($\mu$-invariants), and a \emph{flip sequence} so that we continue to have the default orientation and positive ordering of odd variables for intermediate triangulations as one applies flips, thus allowing inductive arguments.  However, using the holonomy matrix formulas from \cite{moz22b} allows us to consider super $\lambda$-lengths on more general triangulated surfaces even when we are not guaranteed a choice of spin structure or ordering of odd variables that would allow us to obtain positive coefficients on each term in our expansions.  It is with such matrices that we began our investigation of super $\lambda$-lengths associated to a traingulation of a once-punctured torus.  In the ordinary case, the resulting $\lambda$-lengths correspond to Markov numbers, as we detail in Section \ref{sec:Markov}, and thus we refer to the resulting super $\lambda$-lengths resulting from this analysis as Super Markov Numbers.  After describing novel combinatorial formulas for Super Markov Numbers (Theorem \ref{thm:supermarkov}), and proving our main result in Section \ref{sec:main_proof}, we present several open questions for future research in Section \ref{sec:open}.

In Section \ref{sec:Annuli}, we provide another application to the approach of holonomy matrices by considering super $\lambda$-lengths of bridging arcs on marked annuli.  This case also directly generalizes the author's previous work on Super Fibonacci numbers, but unlike that case (which corresponded to annuli with one marked point on each boundary), when there are more marked points involved, we need to work with more odd variables, and mutations induce recurrences involving more complicated square roots.  After obtaining initial computational results using holonomy matrices, we prove some associated combinatorial formulas for such super $\lambda$-lengths using double dimer covers, and leaving other cases as conjectural.  We similarly conclude Seciton \ref{sec:Annuli} with further directions.

We begin in Section \ref{sec:prelim} by recapping the set-up from the author's previous work on combinatorial formulas for decorated super Teichm\"uller space from \cite{moz21,moz22,moz22b} and the general approach provided by holonomy matrices.  This will then allow us to give the more specialized combinatorial formulas in context as we focus on the cases of the once-punctured torus and marked annuli in Sections \ref{sec:Markov} and \ref{sec:Annuli}, respectively.

{\bf Acknowledgements:} The author thanks Esther Banaian, Sophie Morier-Genoud, Nick Ovenhouse, and Sylvester Zhang for engaging exchanges during the writing of this paper, and Jim Propp, Christophe Reutenauer, and Ralf Schiffler for formative discussions on Markov numbers. 

\section{Preliminaries}
\label{sec:prelim}

We begin by considering marked surfaces (possibly with boundary) where there is a well-defined ``\emph{default orientation}'', as defined in \cite{moz21}, which is pictured in \Cref{fig:default_spin}.   This includes the case of marked disks with marked points exclusively on its boundary.   For such a surface, the maximal groupings of consecutive triangles which share a common vertex (indicated by different colors) are called ``\emph{fan segments}'', and the common vertex they share is called the ``\emph{fan center}'' (vertices labelled $c_i$). The default orientation is defined so that the edges connecting
fan centers are oriented $c_1 \to c_2 \to c_3 \to \cdots$, and the remaining edges are oriented \emph{away} from the fan centers.

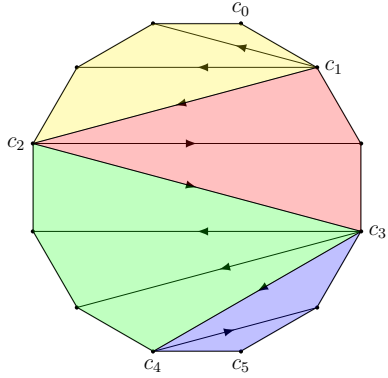
\begin{figure}[h!]
\centering
\scalebox{0.7}{
\begin{tikzpicture}[decoration={
    markings,
    mark=at position 0.5 with {\arrow[scale=1.5]{latex}}},
    scale=0.9] 

\tikzstyle{every path}=[draw] 
		\path
    node[
      regular polygon,
      regular polygon sides=12,
      draw,
      inner sep=2.2cm,
    ] (hexagon) {}
    %
    (hexagon.corner 1) node[above] {$c_0$}
    (hexagon.corner 2) node[above] {}
    (hexagon.corner 3) node[left] {}
    (hexagon.corner 4) node[left] {$c_2$}
    (hexagon.corner 5) node[below] {}
    (hexagon.corner 6) node[right] {}
    (hexagon.corner 7) node[below] {$c_4$}
    (hexagon.corner 8) node[below] {$c_5$}
    (hexagon.corner 9) node[left] {}
    (hexagon.corner 10) node[right] {$c_3$}
    (hexagon.corner 11) node[below] {}
    (hexagon.corner 12) node[right] {$c_1$}
;

\foreach \x in {1,2,...,12}{
\draw (hexagon.corner \x) node [fill,circle,scale=0.2] {};}

\draw[postaction={decorate}] (hexagon.corner 12)--(hexagon.corner 4);
\draw[postaction={decorate}] (hexagon.corner 12)--(hexagon.corner 2);
\draw[postaction={decorate}] (hexagon.corner 12)--(hexagon.corner 3);

\draw[postaction={decorate}](hexagon.corner 4)--(hexagon.corner 10);
\draw[postaction={decorate}](hexagon.corner 4)--(hexagon.corner 11);

\draw[postaction={decorate}](hexagon.corner 10)--(hexagon.corner 7);
\draw[postaction={decorate}](hexagon.corner 10)--(hexagon.corner 5);
\draw[postaction={decorate}](hexagon.corner 10)--(hexagon.corner 6);

\draw[postaction={decorate}](hexagon.corner 7)--(hexagon.corner 9);

\draw[fill=yellow, nearly transparent] (hexagon.corner 12)--(hexagon.corner 4)--(hexagon.corner 3)--(hexagon.corner 2)--(hexagon.corner 1)--cycle;
\draw[fill=red, nearly transparent] (hexagon.corner 12)--(hexagon.corner 4)--(hexagon.corner 10)--(hexagon.corner 11)--cycle;

\draw[fill=green, nearly transparent] (hexagon.corner 10)-- (hexagon.corner 4)--(hexagon.corner 5)--(hexagon.corner 6)--(hexagon.corner 7)--cycle;
\draw[fill=blue, nearly transparent] (hexagon.corner 10)--(hexagon.corner 7)--(hexagon.corner 8)--(hexagon.corner 9)--cycle;

\coordinate (m 1) at  ($0.38*(hexagon.corner 2)+0.38*(hexagon.corner 1)+0.24*(hexagon.corner 12)$);
\coordinate (m 2) at  ($0.28*(hexagon.corner 2)+0.28*(hexagon.corner 3)+0.44*(hexagon.corner 12)$);
\coordinate (m 3) at  ($0.24*(hexagon.corner 4)+0.24*(hexagon.corner 3)+0.52*(hexagon.corner 12)$);
\coordinate (m 4) at  ($0.52*(hexagon.corner 4)+0.24*(hexagon.corner 11)+0.24*(hexagon.corner 12)$);
\coordinate (m 5) at  ($0.56*(hexagon.corner 4)+0.22*(hexagon.corner 11)+0.22*(hexagon.corner 10)$);
\coordinate (m 6) at  ($0.27*(hexagon.corner 4)+0.27*(hexagon.corner 5)+0.46*(hexagon.corner 10)$);
\coordinate (m 7) at  ($0.23*(hexagon.corner 6)+0.23*(hexagon.corner 5)+0.44*(hexagon.corner 10)$);
\coordinate (m 8) at  ($0.26*(hexagon.corner 6)+0.26*(hexagon.corner 7)+0.48*(hexagon.corner 10)$);
\coordinate (m 9) at  ($0.25*(hexagon.corner 9)+0.5*(hexagon.corner 7)+0.25*(hexagon.corner 10)$);
\coordinate (m 10) at  ($0.36*(hexagon.corner 9)+0.24*(hexagon.corner 7)+0.4*(hexagon.corner 8)$);
\end{tikzpicture}
}
\caption{The default orientation of a generic triangulation where each fan segment is colored differently.}
\label{fig:default_spin}
\end{figure}

The \emph{decorated super-Teichm\"uller space} of $P$ is a superalgebra with the following generators: 
for each edge in $T$ with endpoints $i$ and $j$, an even generator $\lambda_{ij}$
(called a ``$\lambda$-length'')\footnote{The algebra is actually generated by the square rooots of the $\lambda$-lengths.}, 
and for each triangle in $T$ with vertices $i,j,k$, an odd generator $\boxed{ijk}$ (called a ``$\mu$-invariant'').

When two triangulations are related by a flip, as in \Cref{fig:super_ptolemy}, we define new elements of the algebra by the following ``\emph{super Ptolemy relations}'':
\begin{align}
    ef      &= ac+bd+\sqrt{abcd} \, \sigma\theta \label{eqn:super_ptolemy_lambda} \\
    \sigma' &= \frac{\sigma\sqrt{bd}-\theta\sqrt{ac}}{\sqrt{ac+bd}} = \frac{\sigma\sqrt{bd}-\theta\sqrt{ac}}{\sqrt{ef}} \label{eqn:super_ptolemy_mu_right} \\
    \theta' &= \frac{\theta\sqrt{bd}+\sigma \sqrt{ac}}{\sqrt{ac+bd}} = \frac{\theta\sqrt{bd}+\sigma \sqrt{ac}}{\sqrt{ef}} \label{eqn:super_ptolemy_mu_left}
\end{align}

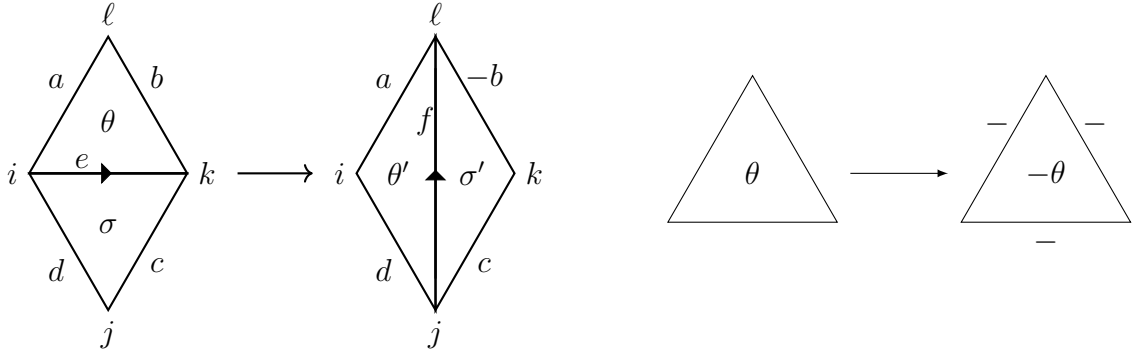
\begin{figure}[h!]
\centering
\begin{tikzpicture}[scale=0.7, baseline, thick]

    \draw (0,0)--(3,0)--(60:3)--cycle;
    \draw (0,0)--(3,0)--(-60:3)--cycle;

    \draw (0,0)--node {\midarrow} (3,0);

    \draw node[above]      at (70:1.5){$a$};
    \draw node[above]      at (30:2.8){$b$};
    \draw node[below]      at (-30:2.8){$c$};
    \draw node[below=-0.1] at (-70:1.5){$d$};
    \draw node[above] at (1,-0.12){$e$};

    \draw node[left] at (0,0) {$i$};
    \draw node[above] at (60:3) {$\ell$};
    \draw node[right] at (3,0) {$k$};
    \draw node[below] at (-60:3) {$j$};

    \draw node at (1.5,1){$\theta$};
    \draw node at (1.5,-1){$\sigma$};
\end{tikzpicture}
\begin{tikzpicture}[baseline]
    \draw[->, thick](0,0)--(1,0);
    \node[above]  at (0.5,0) {};
\end{tikzpicture}
\begin{tikzpicture}[scale=0.7, baseline, thick,every node/.style={sloped,allow upside down}]
    \draw (0,0)--(60:3)--(-60:3)--cycle;
    \draw (3,0)--(60:3)--(-60:3)--cycle;

    \draw node[above]      at (70:1.5)  {$a$};
    \draw node[above]      at (30:2.8)  {$-b$};
    \draw node[below]      at (-30:2.8) {$c$};
    \draw node[below=-0.1] at (-70:1.5) {$d$};
    \draw node[left]       at (1.7,1)   {$f$};

    \draw (1.5,-2) --node {\midarrow} (1.5,2);

    \draw node[left] at (0,0) {$i$};
    \draw node[above] at (60:3) {$\ell$};
    \draw node[right] at (3,0) {$k$};
    \draw node[below] at (-60:3) {$j$};

    \draw node at (0.8,0){$\theta'$};
    \draw node at (2.2,0){$\sigma'$};
\end{tikzpicture}  \hspace{3em} \begin {tikzpicture}[scale=1.3, baseline]
        \draw (0,1) -- (-0.866,-0.5) -- (0.866,-0.5) -- cycle;
        \draw (0,0) node {$\theta$};

        \draw[-latex] (1,0) -- (2,0);

        \begin {scope}[shift={(3,0)}]
            \draw (0,1) -- (-0.866,-0.5) -- (0.866,-0.5) -- cycle;
            \draw (0,-0.7) node{$-$};
            \draw (-0.5,0.5) node{$-$};
            \draw (0.5,0.5) node{$-$};
            \draw (0,0) node {$-\theta$};
        \end {scope}
    \end {tikzpicture}

\caption{(Left): Super Ptolemy Transformation.  (Right): Equivalence Relation.}
\label{fig:super_ptolemy}
\end{figure}
Note that in \Cref{eqn:super_ptolemy_lambda}, the order of multiplying the two odd variables $\sigma$ and $\theta$ is determined by the orientation of the 
edge being flipped (see the arrow in \Cref{fig:super_ptolemy}).

In Figure \ref{fig:super_ptolemy} (Left), the orientations of the four boundary edges 
are omitted, but the super Ptolemy transformation does change the orientation of the edge labeled $b$ (the edges $a,c,d$ keep their same orientation).

For larger triangulations, we specify a standard order to multiply together odd variables based on the orientation of interior edges.  We follow \cite[Remark 5.7]{moz21} and define the \emph{positive ordering} as follows.

\begin{definition}
\label{def:pos_ordering} 
If we label the $\mu$-invariants as $\theta_1, \theta_2, \dots, \theta_n$ in order according to the crossings by arc $\gamma$, then starting from the end we apply the following procedure.  Suppose that $\theta_{k+1}, \theta_{k+2}, \dots, \theta_n$ are already ordered.  Then if the edge separating $\theta_k$ and $\theta_{k+1}$ is oriented so that $\theta_k$ is to the right, then we declare 
$\theta_k > \theta_i$ for all $k+1 \leq i \leq n$.  On the other hand, if $\theta_k$ is to the left of $\theta_{k+1}$, then we declare $\theta_k < \theta_i$ for all $k+1 \leq i \leq n$.  Applying this first to the case $k=n-1$ and following by descent until $k=1$ yields an ordering on all $n$ $\mu$-invariants.
\end{definition}

In previous work \cite[Theorem 6.2(a)]{moz22},  we provided a combinatorial formula for expansions of super $\lambda$-lengths of arcs cutting through triangulations of polygons using \emph{double dimer covers} of \emph{snake graphs}.  We defer the definition of snake graphs to Section \ref{sec:combo} where we discuss them in detail with the specific application of Super Markov numbers in mind.  However, we note that given a graph of vertices and edges (each edge is a pair of distinct vertices), a \emph{double dimer cover} is a distinguished collection of edges (possibly with multiplicities) so that every vertex is covered exactly once.

Every double dimer cover is a disjoint union of doubled edges and cycles (a configurations of edges used singly that is homotopic to a circle).  Thus following \cite[Definition 4.4]{moz22}, it makes sense to define the weight of a double dimer cover $M$ as $$\prod_{e \in M} wt(e) \prod_{\mathrm{Cycle~}C \mathrm{~of~}M} wt(C)$$ where 
$wt(e)$ is the square-root of the label of edge $e$, and $wt(C) = \mu_i \mu_j$ where $\mu_i$ and $\mu_j$ are two $\mu$-invariants whose definition is deferred until Section \ref{sec:combo} after we have defined our special case of snake graphs. 
  
Following \cite{fg_06, mw13}, in \cite{moz22b} we defined \emph{elementary steps} on marked surfaces, and associated certain matrices from \Cref{def:holonomy_matrices}
 to them.  We begin with the special case where our marked surfaces is simply a polygon, i.e. a genus $0$ surface with a single boundary.

Fix $(S,M)$ a marked disk where $M$ is the set of marked points on the boundary and $T$ a triangulation of it.

\begin {definition} \label{def:GammaT}
    From the triangulation $T$ of the marked disk $(S,M)$, we will define a planar graph $\Gamma_T$. Inside each triangle
    of $T$, there is a hexagonal face of $\Gamma_T$ with three sides parallel to the sides of the triangle. 
    When two triangles share a side, the two vertices of $\Gamma_T$ on
    opposite sides of this edge are connected (see \Cref{fig:Gamma_T}).
\end {definition}

\begin {figure}[h]
\centering
\begin {tikzpicture}
    \draw[dashed] (-3,0) -- (0,-3) -- (3,0) -- (0,3) -- cycle;
    \draw[dashed] (0,-3) -- (0,3);

    \draw[fill=black] (-0.3, -1.5) circle (0.06);
    \draw[fill=black] (-0.3, 1.5) circle (0.06);
    \draw[fill=black] (0.3,  -1.5) circle (0.06);
    \draw[fill=black] (0.3,  1.5) circle (0.06);

    \draw[fill=black] (-0.7,-1.9) circle (0.06);
    \draw[fill=black] (0.7,-1.9) circle (0.06);
    \draw[fill=black] (-0.7,1.9) circle (0.06);
    \draw[fill=black] (0.7,1.9) circle (0.06);

    \draw[fill=black] (-2.2,-0.4) circle (0.06);
    \draw[fill=black] (-2.2,0.4) circle (0.06);
    \draw[fill=black] (2.2,-0.4) circle (0.06);
    \draw[fill=black] (2.2,0.4) circle (0.06);

    \draw (0.3,-1.5) -- (0.3,1.5) -- (-0.3,1.5) -- (-0.3,-1.5) -- cycle;
    \draw (-0.3,-1.5) -- (-0.7,-1.9) -- (-2.2,-0.4) -- (-2.2,0.4) -- (-0.7,1.9) -- (-0.3,1.5);
    \draw (0.3,-1.5) -- (0.7,-1.9) -- (2.2,-0.4) -- (2.2,0.4) -- (0.7,1.9) -- (0.3,1.5);
\end {tikzpicture}
\caption {The graph $\Gamma_T$, with $T$ in dashed lines}
\label {fig:Gamma_T}
\end {figure}
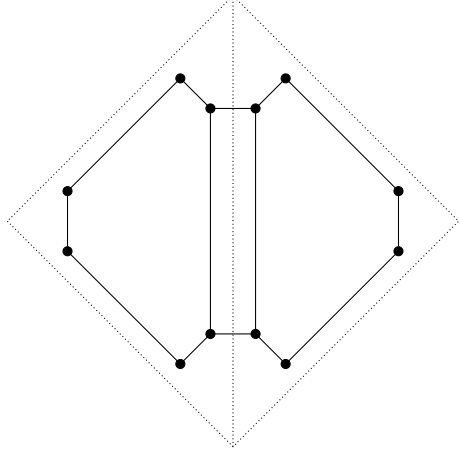

\begin {remark} \label{rmk:Gamma_T_edges_faces}
    The graph $\Gamma_T$ has 3 kinds of edges and 2 kinds of faces. The three types of edges of $\Gamma_T$ are:
    \begin {itemize}
        \item The edges parallel to arcs $\tau$ of the triangulation $T$. (If $\tau \in T$ is a boundary edge,
              then there is only one such edge of $\Gamma_T$, and if $\tau$ is an internal diagonal, then there are two such parallel edges in $\Gamma_T$.)
        \item The edges within a triangle that are \emph{not} parallel to arcs $\tau$ of $T$. (These naturally correspond to the angles
              of the triangles.)
        \item The edges which cross the arcs $\tau$ of $T$.
    \end {itemize}
  
    The two types of faces are as follows:
    \begin {itemize}
        \item Within each triangle of $T$, there is a hexagonal face of $\Gamma_T$.
        \item Surrounding each internal diagonal of $T$, there is a quadrilateral face of $\Gamma_T$.
    \end {itemize}
\end {remark}

\begin {definition}
    For a graph embedded on a surface, a \emph{graph connection} is an assignment of a matrix to each oriented edge,
    such that opposite orientations of the same edge are assigned inverse matrices. For a path in the graph,
    the \emph{holonomy} is the corresponding composition/product of matrices along the path. If the path is a loop,
    then the holonomy is also called \emph{monodromy}. A connection is called \emph{flat} if the monodromy around each contractible face
    is the identity matrix.
\end {definition}

The group $\osp(1|2)$ is defined as the set of $2|1\times 2|1$ supermatrices $g$ with\footnote{Here $\ber(g)$ denotes 
the supersymmetric analogue of the determinant of a matrix known as the Berezinian and $J$ is the super analaogue of the representation of the standard symplectic form.}
$\ber(g)=1$, and satisfying
\(g^{\st}J g=J\).
Writing $g$ as $\ospmatrix {a}{b}{\gamma} {c}{d}{\delta} {\alpha}{\beta}{e}$, these constraints can be written down explicitly in the following system of equations.
\begin{align}
	e&=1+\alpha\beta  \label{eq:osp1}\\
	e^{-1}&=ad-bc   \label{eq:osp2}\\
	\alpha &=c\gamma-a\delta \label{eq:osp3}\\
	\beta &=d\gamma-b\delta \label{eq:osp4}\\
	\gamma&=a\beta-b\alpha \label{eq:osp5}\\
	\delta&=c\beta-d\alpha \label{eq:osp6}.
\end{align}

We will now define a flat $\mathrm{Osp}(1|2)$-connection on the graph $\Gamma_T$.  

\begin{definition}[elementary steps] \label{def:holonomy_matrices}
	Given $(S,M)$ and $T$, we define the following holonomy matrices for the edges described in \Cref{rmk:Gamma_T_edges_faces}.
        They are pictured in \Cref{fig:holonomy_matrices}.
        \begin {enumerate}
            \item Inside triangle $ijk$, the counter-clockwise orientation of the edge at angle $i$ is assigned the matrix $A(h^i_{jk}|\theta) 
            = \left[\begin{array}{cc|c} 1 & 0 & 0 \\ -h & 1 & \sqrt{h} \theta \\ \hline -\sqrt{h} \theta & 0 & 1 \end{array}\right]$ where 
            $h=h^i_{jk} = \frac{\lambda_{jk}}{\lambda_{ij}\lambda_{ik}}$.
            \item Inside triangle $ijk$, the counter-clockwise orientation of the edge $ij$ is assigned the matrix $E(\lambda_{ij}) = 
            \left[\begin{array}{cc|c} 0 & -\lambda_{ij} & 0 \\ \lambda_{ij}^{-1} & 0 & 0 \\ \hline 0 & 0 & 1 \end{array}\right]$.
            \item For each internal diagonal $ij$, there are two edges of $\Gamma_T$ which cross $ij$. Supposing that the spin structure 
                  has orientation $i \to j$, the edge closer to $i$ is assigned the identity matrix $Id_{2|1}$, and the edge closer to $j$ is assigned the fermionic reflection matrix, 
                  which is given by $\rho = \left[\begin{array}{cc|c} -1 & 0 & 0 \\ 0 & -1 & 0 \\ \hline 0 & 0 & 1 \end{array}\right]$.
        \end {enumerate}
\end{definition}

\begin{figure}[h]
\centering
{
\begin{tabular}{|c|cc|cc|}
\hline &&&&\\[-1.2em]
  Type $(i)$ &
\begin{tabular}{c}
  \tikzfig{angle-counter}
  \end{tabular}&
  $A\left( h^i_{jk} | \theta\right)^{-1}$ &
  \begin{tabular}{c} 
   \tikzfig{angle-clock} \end{tabular}&
  $A\left( h^i_{jk} | \theta\right)$ \\&&&&\\[-1.2em]
  \hline &&&&\\[-1.2em]
  Type $(ii)$ &
  \begin{tabular}{c}
  \tikzfig{edge-counter}
  \end{tabular}
  &
  $E(\lambda_{ij})^{-1}$ &
  \begin{tabular}{c}\tikzfig{edge-clock}\end{tabular}
   & 
  $E(\lambda_{ij})$ \\&&&&\\[-1.2em]
  \hline&&&&\\[-1.2em]
  Type $(iii)$ &
 \begin{tabular}{c}\tikzfig{cross-1} \end{tabular} &
    $\rho$ &
    \begin{tabular}{c} \tikzfig{cross-2} \end{tabular}
    &
      $\id$ \\[1em]
  \hline
\end{tabular}
}

\caption {The three types of holonomy matrices. }
\label{fig:holonomy_matrices}
\end{figure}

\begin {prop} \cite[Prop. 3.6 ]{moz22b}
    The holonomy matrices from \Cref{def:holonomy_matrices} define a flat $\mathrm{Osp}(1|2)$-connection on $\Gamma_T$.
\end {prop}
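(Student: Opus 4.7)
The proof naturally splits into (i) verifying each holonomy matrix is an element of $\mathrm{Osp}(1|2)$ and is consistent under orientation reversal, and (ii) verifying that the monodromy around every contractible face of $\Gamma_T$ is the identity. For (i), I would plug each matrix from \Cref{def:holonomy_matrices} directly into the defining relations \eqref{eq:osp1}--\eqref{eq:osp6}. The identity and $\rho = \mathrm{diag}(-1,-1\,|\,1)$ are immediate. For $A(h|\theta)$ one reads off $(a,b,c,d)=(1,0,-h,1)$, $(\alpha,\beta,\gamma,\delta)=(-\sqrt{h}\,\theta,\,0,\,0,\,\sqrt{h}\,\theta)$, and $e=1$, so each of the six equations reduces to a trivial check (using $\theta^2=0$ where necessary). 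For $E(\lambda)$, the even $2\times 2$ block has determinant $1$ and the odd rows and columns vanish, so all six constraints hold. Computing $A(h|\theta)^{-1}$ and $E(\lambda)^{-1}$ explicitly confirms they agree with the matrices attached to the reversed orientations in \Cref{fig:holonomy_matrices}, so the assignment is a well-defined graph connection.

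For the flatness in (ii), I would examine each face type listed in \Cref{rmk:Gamma_T_edges_faces}. The quadrilateral face around an internal diagonal $ij$ has four boundary edges: two parallel to $ij$ (one in each adjacent hexagon, each labeled $\lambda_{ij}$) and two crossing $ij$ (contributing $\rho$ and $\mathrm{Id}$). Because the counter-clockwise orientations of the two parallel edges point in opposite directions relative to the quadrilateral, one is traversed in the $E(\lambda_{ij})$-direction and the other in the $E(\lambda_{ij})^{-1}$-direction, with the two crossing matrices interleaved between them. A brief matrix check gives $E(\lambda)^2=\rho$ and $[E(\lambda),\rho]=0$, and the four-fold product then collapses to $\mathrm{Id}$.

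The main obstacle is the hexagonal face inside a triangle $ijk$, where the monodromy is a six-fold product of the form
\[
A(h^i_{jk}|\theta)\, E(\lambda_{ij})\, A(h^j_{ki}|\theta)\, E(\lambda_{jk})\, A(h^k_{ij}|\theta)\, E(\lambda_{ki})
\]
(up to a cyclic starting point and to orientation-determined inversions), with $\theta = \boxed{ijk}$. I would verify this equals $\mathrm{Id}$ by direct supermatrix multiplication, separated into sectors. The even $2\times 2$ block reduces to a classical $\mathrm{SL}_2$ triangle identity using $h^a_{bc}=\lambda_{bc}/(\lambda_{ab}\lambda_{ac})$, so that factor alone gives the identity in the bosonic sector. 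The odd sector splits as a sum of three $\theta$-linear contributions, one for each $A$-factor, dressed by the $\lambda$'s coming from the interleaved $E$-matrices; since $\theta^2=0$, only these linear terms survive, and the needed cancellation amounts to one explicit algebraic identity among $\sqrt{h^i_{jk}},\sqrt{h^j_{ki}},\sqrt{h^k_{ij}}$ and $\lambda_{ij},\lambda_{jk},\lambda_{ki}$. This final bookkeeping of signs, orientations, and the order of odd factors is the subtle step, but once it is arranged, the identity holds on the nose and flatness follows.
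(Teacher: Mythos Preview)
The paper does not give its own proof of this proposition; it is quoted as \cite[Prop.~3.6]{moz22b} and used without argument. Your outline---checking that each generator lies in $\mathrm{Osp}(1|2)$ and then verifying trivial monodromy around the two face types, with the hexagonal face reducing to a classical $\mathrm{SL}_2$ triangle identity in the even block plus a $\theta$-linear cancellation in the odd sector---is the natural direct verification and is essentially what one expects the cited proof in \cite{moz22b} to do. There is nothing in the present paper to compare against beyond the citation.
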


\begin {remark}
    Since the connection is flat, the holonomy between two vertices of $\Gamma_T$ does not depend on the choice of path, and since the graph is planar,
    any two paths are homotopic (thought of as paths on the ambient surface).
\end {remark}

\begin {definition} \label{def:near}
    If vertex $i$ of a polygon is incident to $m$ triangles in $T$, then there are $2m$ vertices of $\Gamma_T$ corresponding to the angles
    of these triangles at $m$. We will say that any of these $2m$ vertices of $\Gamma_T$ are ``\emph{near}'' the vertex $m$.
\end {definition}

\begin {theorem} \cite[Theorem 3.10]{moz22b} \label{thm:12-entry}
    Suppose we have a triangulation $T$ of a polygon endowed with an orientation.  Let $i$ and $j$ be two vertices of the polygon, and $i'$ and $j'$ any vertices of $\Gamma_T$ that are near $i$ and $j$, and let $H$
    be the holonomy from $i'$ to $j'$. Then the $(1,2)$-entry of $H$ is equal to $\pm \lambda_{ij}$.
    \end {theorem}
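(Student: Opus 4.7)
The plan is to proceed by induction on the number $n$ of internal diagonals of $T$ crossed by a geodesic arc $\gamma$ from $i$ to $j$, leveraging the flatness of the $\osp(1|2)$-connection proven above together with the super Ptolemy relations.

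First I would reduce to a canonical choice of near-vertices $i'$ and $j'$. Because the connection is flat and $\Gamma_T$ is planar, $H$ depends only on its endpoints. Any two vertices of $\Gamma_T$ near the same polygon vertex are related by a local path through the fan of triangles at that vertex, and explicit computation with the matrices $A(h|\theta)$ and $E(\lambda)$ shows that such a local path yields a matrix whose action on the $(1,2)$-entry of a product is at most a sign change. It therefore suffices to verify the theorem for one convenient choice of $(i', j')$ for each arc.

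For the base case $n = 0$, the vertices $i$ and $j$ are joined by an edge of $T$, and I would take $i'$ and $j'$ to be the two endpoints of the Type $(ii)$ edge of $\Gamma_T$ parallel to that edge. Then $H = E(\lambda_{ij})^{\pm 1}$, whose $(1,2)$-entry is $\mp \lambda_{ij}$ by \Cref{def:holonomy_matrices}. For the inductive step $n \geq 1$, choose a diagonal $k\ell$ crossed by $\gamma$ and let $T'$ be the triangulation obtained by flipping $k\ell$; the arc $\gamma$ crosses strictly fewer diagonals in $T'$. The super Ptolemy relations (\Cref{eqn:super_ptolemy_lambda}--\Cref{eqn:super_ptolemy_mu_left}) describe how the $\lambda$- and $\mu$-variables of $T'$ arise from those of $T$; substituting these expressions into the matrix product computing the analogous holonomy $H'$ in $\Gamma_{T'}$ yields the matrix product computing $H$ inside the quadrilateral region of $\Gamma_T$ surrounding $k\ell$. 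This substitution is precisely the algebraic content of the flatness established in the cited proposition. Since $\lambda_{ij}$ depends only on the arc and not on the triangulation, the inductive hypothesis $H'_{12} = \pm \lambda_{ij}$ then yields $H_{12} = \pm \lambda_{ij}$.

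The main obstacle is ensuring that the $(1,2)$-entry is \emph{exactly} $\pm \lambda_{ij}$ rather than $\pm \lambda_{ij}$ plus quadratic (or higher) corrections in the $\mu$-invariants, since $H_{12}$ is an even element of the superalgebra and could a priori absorb such terms. Each $A(h|\theta)$ has odd entries only in positions $(2,3)$ and $(3,1)$, while $E(\lambda)$, $\rho$, and $\mathrm{Id}$ are purely even, so tracing through a matrix product reveals that any odd variable introduced into the third column can only return to the $(1,2)$-slot by completing a loop around a contractible region of $\Gamma_T$, where it must vanish by flatness. Formalizing this parity-plus-flatness cancellation — equivalently, verifying that each step of the induction preserves the absence of extraneous even terms in the $(1,2)$-entry — is the technical heart of the proof.
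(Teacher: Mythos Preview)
First, note that the present paper does not prove this theorem at all; it is quoted from \cite{moz22b} as a black box, so there is no in-paper argument to compare your sketch against.

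That said, your inductive step has a genuine gap. You flip a diagonal $k\ell$ and want to compare the holonomy $H$ computed in $\Gamma_T$ with the holonomy $H'$ computed in $\Gamma_{T'}$, asserting that substituting the super Ptolemy relations into the product for $H'$ recovers the product for $H$, and that ``this substitution is precisely the algebraic content of the flatness established in the cited proposition.'' But flatness (the proposition cited from \cite{moz22b}) is a statement internal to a \emph{single} graph $\Gamma_T$: monodromy around each face of $\Gamma_T$ is trivial. What your induction actually needs is a \emph{flip-equivariance} statement relating connections on two different graphs $\Gamma_T$ and $\Gamma_{T'}$ --- that the holonomy between near-vertices of $i$ and $j$ is unchanged (up to sign) when the underlying triangulation is flipped. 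That is a separate identity, not a consequence of flatness, and proving it is essentially the whole content of the theorem. (A smaller point: flipping an \emph{arbitrary} crossed diagonal need not decrease the crossing number; one has to flip the first or last diagonal along $\gamma$.)

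Your ``main obstacle'' paragraph is also off target. The super $\lambda$-length $\lambda_{ij}$ already contains degree-two contributions in the $\mu$-invariants by \eqref{eqn:super_ptolemy_lambda}, so such terms in $H_{12}$ are expected, not extraneous; the question is whether they match those of $\lambda_{ij}$ exactly. And the claim that an odd contribution can reach the $(1,2)$-slot only by ``completing a loop around a contractible region'' misreads the setup: $H$ is a product along a single open path, and products of $A$- and $E$-matrices along such a path routinely deposit even degree-two expressions in the $(1,2)$-entry with no loop traversed. There is no parity-plus-flatness shortcut here; the identification $H_{12}=\pm\lambda_{ij}$ genuinely requires either the flip-equivariance you have not established or the direct computation that leads to \Cref{thm:generic}.
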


\begin {lemma} \cite[Lemma 3.11]{moz22b} \label{lem:ij_independence}
    The result of \Cref{thm:12-entry} does not depend on the particular choices of $i'$ and $j'$.
\end {lemma}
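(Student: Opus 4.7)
The plan is to show that changing the choice of $i'$ to another near vertex $i''$ of $i$ (and independently $j'$ to $j''$) modifies the holonomy by multiplication by matrices with a special block structure that preserves the $(1,2)$-entry up to sign. By flatness and planarity of $\Gamma_T$, the holonomy from $i''$ to $j'$ factors along the path $i''\to i'\to j'$ as $G\cdot H$, where $H$ is the holonomy from $i'$ to $j'$ and $G$ is the holonomy from $i''$ to $i'$; a symmetric argument on the right handles the change of $j'$.

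First I would analyze the local structure of $\Gamma_T$ near the vertex $i$. If $i$ is incident to $m$ triangles of $T$, then the $2m$ vertices of $\Gamma_T$ near $i$ arrange themselves into a single path — a ``zigzag'' along the fan at $i$: within each triangle at the angle $i$, the two near vertices are joined by a type (i) edge, and between two adjacent triangles sharing an internal arc $ij'$, the corresponding two near vertices (one in each triangle) are joined by the type (iii) edge that crosses $ij'$ closer to $i$. Thus any two near vertices $i',i''$ of $i$ are joined by a subpath of this zigzag using only type (i) and type (iii) edges; a type (ii) edge, being parallel to a triangle side, would leave the fan at $i$.

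Next I would perform the key matrix computation. A direct calculation using $\theta^2=0$ yields
\[
A(h|\theta)^{-1}=\begin{pmatrix} 1 & 0 & 0 \\ h & 1 & -\sqrt{h}\,\theta \\ \sqrt{h}\,\theta & 0 & 1 \end{pmatrix},
\]
so both $A(h|\theta)^{\pm 1}$ have first row $(1,0,0)$ and second column $(0,1,0)^{T}$. The type (iii) matrices $\id$ and $\rho=\mathrm{diag}(-1,-1,1)$ share the analogous property up to signs $\pm 1$. Since the set of supermatrices with first row $(\pm 1,0,0)$ and second column $(0,\pm 1,0)^{T}$ is closed under multiplication, the holonomy $G$ along the zigzag path inherits these properties. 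Expanding the first row of $G$ then gives $(GH)_{1,2}=\pm H_{1,2}=\pm\lambda_{ij}$ by \Cref{thm:12-entry}, and the analogous argument — right-multiplying by the holonomy $G'$ connecting $j'$ to $j''$ and using the second column of $G'$ — yields $(HG')_{1,2}=\pm\lambda_{ij}$.

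The main obstacle is the local-structural claim that any two near vertices of a common polygon vertex are always connected through $\Gamma_T$ by a path using only type (i) and type (iii) edges; this requires a careful inspection of how the hexagonal and quadrilateral faces of $\Gamma_T$ described in \Cref{rmk:Gamma_T_edges_faces} fit together around a vertex. Once this is set up, the remainder reduces to the explicit matrix computation above, which is visible already in the form of the matrices defined in \Cref{def:holonomy_matrices}.
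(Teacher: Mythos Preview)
The paper does not supply its own proof of this lemma: it is quoted verbatim as \cite[Lemma 3.11]{moz22b} and left unproven here, so there is nothing in the present paper to compare against. That said, your argument is correct and is essentially the standard one. The two ingredients --- that any two vertices of $\Gamma_T$ near a common polygon vertex $i$ are connected by a path using only angle (type (i)) and crossing (type (iii)) edges, and that the matrices $A(h|\theta)^{\pm1}$, $\id$, $\rho$ all have first row $(\pm1,0,0)$ and second column $(0,\pm1,0)^T$ --- combine exactly as you describe to show that pre- or post-composing $H$ with the holonomy along such a path only changes $H_{1,2}$ by a sign. Your verification of $A(h|\theta)^{-1}$ and of closure under multiplication is accurate, and the local picture of the ``zigzag'' near $i$ is the right one (the type (ii) edges parallel and adjacent to an arc incident to $i$ are precisely the edges that move you from the corner at $i$ to a different corner of the same triangle, so they are correctly excluded).
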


In fact, we have geometric interpretations of all nine entries of the holonomy matrix $H$.

Let $T$ be a generic triangulation with default orientation, and with fan centers labelled as $c_i$ for $1\leq i\leq N$. Let $(a,b)$ be the longest diagonal in $T$ and denote $a=c_0$ and $b=c_{N+1}$.

\begin{definition} \label{def:H_ab}
Let $H_{a,b}$ denote the holonomy following a path from a vertex near $a=c_0$ (on the side closer to $c_1$) to a vertex near $b=c_{N+1}$ (on the side closer to $c_N$). We say that the holonomy is of type $\epsilon_a \epsilon_b$ where 
	 \begin{align*}\epsilon_a &= \begin{cases}
	 	0&\text{ if }(c_0,c_1,c_2)\text{ are oriented clockwise,}\\
	 	1&\text{ otherwise.}
	 \end{cases}\\
	 \epsilon_b &=
	 \begin{cases}
	 0&\text{ if }(c_{N-1},c_N,c_{N+1})\text{ are oriented clockwise,}\\
	 	1&\text{ otherwise.}
	 \end{cases}\end{align*}
\end{definition}

\begin{theorem}  \cite[Theorem 4.3]{moz22b} \label{thm:generic}
    Let $T$ be a generic triangulation endowed with an arbitrary orientation (based on its spin structure), 
    and with fan centers labelled as $c_i$ for $1\leq i\leq N$ and $a=c_0,b=c_{N+1}$. 
    The holonomy matrix $H_{a,b}$ of type $\epsilon_a \epsilon_b$ is given by 
    \[ 
        H_{a,b} = \ospmatrix
        {-{\lambda_{c_1,c_{N+1}}\over \lambda_{c_0,c_1}}}
        { (-1)^{\epsilon_a} \lambda_{c_0,c_{N+1}} }
        {\Td^{c_{N+1}}_{c_0,c_1} }
        {(-1)^{\epsilon_b}{\l{c_1}{c_N}\over \lambda_{c_0,c_1}\l {c_N}{c_{N+1}}}}
        {(-1)^{\epsilon_a+\epsilon_b -1}{\l{c_0}{c_N}\over\l{c_N}{c_{N+1}}}}
        {(-1)^{\epsilon_b-1}{1\over \l{c_N}{c_{N+1}}}\Td^{c_N}_{c_0,c_1}}
        {{1\over\lambda_{c_0,c_1}}\td {c_1} {c_N}{c_{N+1}}}
        {(-1)^{\epsilon_a-1}\td {c_0}{c_N}{c_{N+1}}} 
        {1+\star}
    \]

    Here the formula for the $(3,3)$-entry (i.e. $1+\star$) can be given two equivalent ways, which  
    follows from applications of combining \cref{eq:osp1,eq:osp2} to get $ad-bc=1-\alpha\beta$ and then cross multiplying  \cref{eq:osp3,eq:osp5} or \cref{eq:osp4,eq:osp6} to get 
    $\alpha\beta=\gamma\delta$.  Using these we write
    \[ 
        1 + \star = 1 + (-1)^{\epsilon_a-1}{1\over\lambda_{c_0,c_1}}\td {c_1}{c_N}{c_{N+1}}\td {c_0} {c_N}{c_{N+1}}
                  = 1 + (-1)^{\epsilon_b-1}{1\over \l{c_N}{c_{N+1}}}\Td^{c_{N+1}}_{c_0,c_1} \Td^{c_{N}}_{c_0,c_1}.
    \]
\end{theorem}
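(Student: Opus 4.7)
My plan is to prove the theorem by induction on the number $N$ of fan centers, realizing the holonomy as a product of elementary matrices from \Cref{def:holonomy_matrices} associated to traversing each fan segment in turn. By \Cref{lem:ij_independence} I may fix the specific choice of vertices near $a$ and $b$ prescribed in \Cref{def:H_ab}, and by flatness of the $\osp(1|2)$-connection I may route the holonomy path along any convenient sequence of edges of $\Gamma_T$ without changing the answer.

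For the base case (a small initial value of $N$, say $N=2$), the surface consists of just two fan segments sharing the diagonal $c_1c_2$. I fix an explicit path in $\Gamma_T$ from the chosen vertex near $c_0$ to the one near $c_3$ and multiply out the elementary matrices $A(h^i_{jk}|\theta)^{\pm 1}$ and $E(\lambda_{ij})^{\pm 1}$, together with any reflections $\rho$ dictated by \Cref{def:holonomy_matrices}(iii) applied to the fan-center edges. This yields a $2|1 \times 2|1$ supermatrix whose entries match the claimed formula; the $(1,2)$-entry offers a consistency check against \Cref{thm:12-entry}, and the triangle symbols $\Td$ and $\td$ simplify to short $\mu$-monomials in this initial case.

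For the inductive step, I append a new fan segment with apex $c_{N+1}$ and new endpoint $c_{N+2}$. Routing the path for $H_{c_0,c_{N+2}}$ so that it first reaches a vertex of $\Gamma_T$ near $c_{N+1}$ decomposes the holonomy as $H_{c_0,c_{N+1}} \cdot M$, where the first factor is supplied by the inductive hypothesis and $M$ is an explicit product of the matrices $A(h^{c_{N+1}}_{jk}|\theta)^{\pm 1}$ and $E(\lambda_{\bullet\bullet})^{\pm 1}$ for the angles and edges of the new fan (preceded by $\rho$ across the diagonal $c_{N+1}c_{N+2}$ if the spin structure requires it). After carrying out the matrix multiplication and using the positive ordering of \Cref{def:pos_ordering} to consolidate odd monomials in the $\mu$-invariants, the entries of $H_{c_0,c_{N+1}} \cdot M$ agree with the claimed formula, with the recursion for $\Td^{c_{N+1}}_{c_0,c_1}$ and $\td{c_1}{c_N}{c_{N+2}}$ following from the recursive structure of the new fan's contribution.

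The main obstacle is the bookkeeping: tracking the sign factors $(-1)^{\epsilon_a},(-1)^{\epsilon_b}$ as the spin structure forces insertions of $\rho$ when the orientation of a fan-center edge reverses, and recognizing the accumulated polynomial expressions in the $\mu$-invariants as the claimed symbols $\Td^{c_{N+1}}_{c_0,c_1}$, $\Td^{c_{N}}_{c_0,c_1}$, $\td{c_1}{c_N}{c_{N+1}}$, and $\td{c_0}{c_N}{c_{N+1}}$ (in the positive ordering of \Cref{def:pos_ordering}). Once the $2 \times 2$ even block and the four odd off-diagonal entries are verified, the two equivalent forms of the $(3,3)$-entry $1+\star$ are forced by the $\osp(1|2)$ identities \eqref{eq:osp1}--\eqref{eq:osp6}: combining \eqref{eq:osp1} with \eqref{eq:osp2} gives $ad-bc = 1-\alpha\beta$, while cross-multiplying \eqref{eq:osp3} with \eqref{eq:osp5}, or \eqref{eq:osp4} with \eqref{eq:osp6}, yields $\alpha\beta = \gamma\delta$, producing the two presentations stated in the theorem.
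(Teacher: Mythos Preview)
The paper does not actually prove this theorem: it is quoted as a preliminary result from \cite{moz22b} (specifically, their Theorem 4.3), and no argument is supplied here beyond the statement itself. So there is no ``paper's own proof'' to compare your proposal against.

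That said, your outline is the natural strategy and almost certainly tracks what \cite{moz22b} does: realize $H_{a,b}$ as a product of elementary matrices from \Cref{def:holonomy_matrices} along a path in $\Gamma_T$, and induct on the number of fan segments by peeling off one segment at a time. Two small cautions. First, watch the order of multiplication: in this paper's conventions (see e.g.\ the Markov holonomy in \Cref{sec:holonomy}, where the product is written $M_{\mathrm{fin}} M_{p+q}\cdots M_2 M_1$), elementary matrices along a path are composed right-to-left, so appending a fan segment at the $c_{N+1}$ end should introduce a left factor rather than a right factor $M$; your decomposition $H_{c_0,c_{N+1}}\cdot M$ may need to be $M\cdot H_{c_0,c_{N+1}}$ (or equivalently, peel from the $c_0$ end). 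Second, the symbols $\Td^{\bullet}_{\bullet,\bullet}$ and $\td{\bullet}{\bullet}{\bullet}$ are not defined in the present paper either; you are right that the inductive step should produce a recursion for them, but you will need to import their definition (from \cite[Definition~1.2]{moz22b}, where they are the normalized odd quantities $\Delta^i_{jk}$) to close the argument.
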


As discussed in Section 6 of \cite{moz22b}, even though Theorem \ref{thm:generic} is stated only for polygons, the result can be extending to other marked surfaces.  In particular, we can lift a given triangulation $T$ of an arbitrary marked surface to its universal cover and then for any arc $\gamma$ crossing $T$, there is a polygon in the universal cover comprised of the triangles crossed by $\gamma$.  We then appropriately identify odd elements as well as even elements and use Theorem \ref{thm:generic} to get the relevant matrix elements accordingly.  However, for a general marked surface, the lift of the orientation on $T$ to the universal cover will not typically be a manifestation of the default orientation (even modulo the orientation of arcs appearing on the boundary of this lifted polygon).  In particular, spin structures are equivalence classes of orientations up to negating $\mu$-invariants and reversing the orientation on incident edges, as in Figure \ref{fig:super_ptolemy} (Right).  Since Theorem \ref{thm:generic} describes the entries even in the presence of an arbitrary orientation, we can continue to apply our previous results in such cases.

\section{Application to Super Markov Numbers} \label{sec:Markov}

The decorated super-Teichm\"uller space of a once punctured torus was systematically studied in \cite{mcshane}.  In particular, an ideal triangulation of a once punctured torus consists of two triangles, which is most easily seen when a torus is presented as a square with its two horizontal edges identified and its two vertical edges identified.  The four corners of the square are thereby identified together as the unique puncture on the surface.  Lifting the torus to its universal cover, i.e. the Euclidean plane, we use the standard initial triangulation consisting of lines of slope $\frac{0}{1}$, $\frac{1}{0}$, and $\frac{-1}{1}$ as in Figure \ref{fig:super_torus} (Left).  Then, other arcs on the torus correspond to lines of rational slope, plus the vertical line of slope $\frac{1}{0}$, and all such arcs are reachable from the standard initial triangulation by a sequence of diagonal flips.

Up to renaming the three initial arcs, we may assume without loss of generality that our sequence begins with a flip of the initial arc of slope $\frac{-1}{1}$ to get an arc of slope $\frac{1}{1}$, followed by a flip of the initial arc of slope $\frac{1}{0}$ to get an arc of slope $\frac{1}{2}$, see Figure \ref{fig:super_torus} (Middle) and (Right), respectively.  From the triple of arcs of slopes $\{\frac{0}{1}, \frac{1}{1}, \frac{1}{2}\}$ we continue with further flips, as long as we never backtrack, via the rule   

$$\bigg\{\frac{a}{b}, \frac{c}{d}, \frac{e}{f}\bigg\} \to \bigg\{\frac{a}{b}, \frac{a+e}{b+f}, \frac{e}{f}\bigg\} \mathrm{~~or~~} \to \bigg\{\frac{c+e}{d+f}, \frac{c}{d}, \frac{e}{f} \bigg\}.$$

For example, $\{\frac{0}{1}, \frac{1}{1}, \frac{1}{2}\} \to \{\frac{0}{1}, \frac{1}{3}, \frac{1}{2}\}$ or 
$ \to \{\frac{2}{3}, \frac{1}{1}, \frac{1}{2}\}$ depending on whether we flip the arc of slope $\frac{1}{1}$ or the arc of slope $\frac{0}{1}$, respectively.  For any such sequence of flips (without backtracking), we obtain a rational fraction $\frac{p}{q}$ such that 
$p$ and $q$ are relatively prime and such that $0 < \frac{p}{q} \leq 1$.  (Note that if we had started our sequence with other flips, we would instead reach fractions that were negative or greater than $1$.)  Furthermore, every such fraction $0 < \frac{p}{q} \leq 1$ is reachable by a sequence of flips.  

In the ordinary geometry associated to the once-punctured torus, if we endow it with a metric so that the first three initial arcs have $\lambda$-lengths each equal to $1$, and we let $\{x,y,z\}$ denote the $\lambda$-lengths of any other triangulation of the once-punctured torus under the same metric, then $\{x,y,z\}$ is known in number theory as a Markov triple, and have the property that they are the triples that can be reached from $\{1,1,1\}$ by a sequence of flips known as Vieta jumping: 
$\{x,y,z\} \to \{3yz-x, y,z\} \mathrm{~or~} \to \{x, 3xz-y, z\}  \mathrm{~or~} \to \{x, y, 3xy - z\}.$

All such triples involve positive integers, and are in fact the nontrivial solutions to the Diophantine equation, known as the Markov equation:
$$x^2 + y^2 + z^2 = 3xyz.$$

\begin{remark}
Due to the Markov equation, the transformations for Vieta jumping can also be rewritten equivalently as 
$\{x,y,z\} \to \{\frac{y^2+z^2}{x}, y,z\} \mathrm{~or~} \to \{x, \frac{x^2+z^2}{y}, z\}  \mathrm{~or~} \to \{x, y, \frac{x^2+y^2}{z}\},$
which makes the relationship to the cluster algebra associated to the once-punctured torus clearer since, in this form, Vieta jumping agrees with the Ptolemy exchange relation applied to a triangulated quadrilateral in a torus.  See \cite{propp05} for a combinatorial analysis based on this latter perspective.
\end{remark}

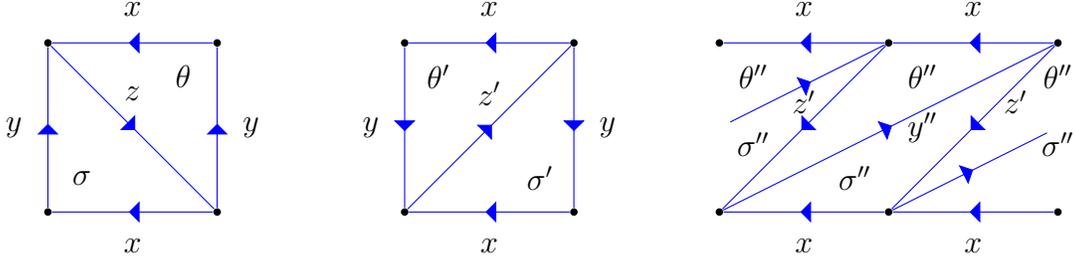
\begin{figure}
\begin{tikzpicture}[scale=0.45,every node/.style={sloped,allow upside down}]
    \node [circle,fill=black,inner sep=1pt] (0) at (-7, 3) {};
    \node [circle,fill=black,inner sep=1pt] (1) at (-7, -2) {};
    \node [circle,fill=black,inner sep=1pt] (2) at (-2, 3) {};
    \node [circle,fill=black,inner sep=1pt] (3) at (-2, -2) {};
    \node [] (4) at (-6, -1) {$\sigma$};
    \node [] (5) at (-3, 2) {$\theta$};
    
    \node [] (6) at (-4.5, -3) {$x$};
    \node [] (7) at (-4.5, 4) {$x$};
    \node [] (8) at (-8, 0.5) {$y$};
    \node [] (9) at (-1, 0.5) {$y$};
     \node [] (10) at (-4.5, 1.5) {$z$};               

    \draw [style=blue] (3) to node {\midarrow} (1);
    \draw [style=blue] (2) to node {\midarrow} (0);
    \draw [style=blue] (0) to node {\midarrow} (3);
    \draw [style=blue] (1) to node {\midarrow}  (0);
    \draw [style=blue] (3) to node {\midarrow}  (2);
\end{tikzpicture} \hspace{2em}
\begin{tikzpicture}[scale=0.45,every node/.style={sloped,allow upside down}]
    \node [circle,fill=black,inner sep=1pt] (0) at (-7, 3) {};
    \node [circle,fill=black,inner sep=1pt] (1) at (-7, -2) {};
    \node [circle,fill=black,inner sep=1pt] (2) at (-2, 3) {};
    \node [circle,fill=black,inner sep=1pt] (3) at (-2, -2) {};
    \node [] (4) at (-3, -1) {$\sigma'$};
    \node [] (5) at (-6, 2) {$\theta'$};
    
    \node [] (6) at (-4.5, -3) {$x$};
    \node [] (7) at (-4.5, 4) {$x$};
    \node [] (8) at (-8, 0.5) {$y$};
    \node [] (9) at (-1, 0.5) {$y$};
     \node [] (10) at (-4.5, 1.5) {$z'$};               

    \draw [style=blue] (3) to node {\midarrow} (1);
    \draw [style=blue] (2) to node {\midarrow} (0);
    \draw [style=blue] (1) to node {\midarrow} (2);
    \draw [style=blue] (0) to node {\midarrow}  (1);
    \draw [style=blue] (2) to node {\midarrow}  (3);
\end{tikzpicture} \hspace{2em}
\begin{tikzpicture}[scale=0.45,every node/.style={sloped,allow upside down}]
    \node [circle,fill=black,inner sep=1pt] (0) at (-7, 3) {};
    \node [circle,fill=black,inner sep=1pt] (1) at (-7, -2) {};
    \node [circle,fill=black,inner sep=1pt] (2) at (-2, 3) {};
    \node [circle,fill=black,inner sep=1pt] (3) at (-2, -2) {};
    \node [circle,fill=black,inner sep=1pt] (4) at (3, 3) {};
    \node [circle,fill=black,inner sep=1pt] (5) at (3, -2) {};    
    \node [] (6) at (-3, -1) {$\sigma''$};
    \node [] (7) at (-6, 2) {$\theta''$};
    \node [] (8) at (3, 0) {$\sigma''$};
    \node [] (9) at (-1, 2) {$\theta''$};

    \node [] (7) at (-6, 0) {$\sigma''$};
    \node [] (8) at (3, 2) {$\theta''$};
    
    \node [] (10) at (-4.5, -3) {$x$};
    \node [] (11) at (-4.5, 4) {$x$};
    \node [] (12) at (0.5, -3) {$x$};
    \node [] (13) at (0.5, 4) {$x$};
  
    \node [] (14) at (-1, 0.5) {$y''$};    
     \node [] (15) at (-4.5, 1.25) {$z'$};               
     \node [] (16) at (1.75, 1.25) {$z'$};                    

     \node [] (17) at (-7, 0.5) {};               
     \node [] (18) at (3, 0.5) {};

    \draw [style=blue] (3) to node {\midarrow} (1);
    \draw [style=blue] (2) to node {\midarrow} (0);
    \draw [style=blue] (2) to node {\midarrow} (1);
    \draw [style=blue] (5) to node {\midarrow} (3);
    \draw [style=blue] (4) to node {\midarrow} (2);
    \draw [style=blue] (4) to node {\midarrow} (3);
    \draw [style=blue] (1) to node {\midarrow} (4);    
    \draw [style=blue] (3) to node {\midarrow} (18);    
    \draw [style=blue] (17) to node {\midarrow} (2);    
    
\end{tikzpicture}

\caption{(Left): Triangulation of a once-punctured torus with its two triangles oriented cyclically.  (Middle): Result after flipping $z$ to get $z'$.  (Right): Then flipping $y$ to get $y''$.  The triangulation continues to be oriented cyclically.}
\label{fig:super_torus}
\end{figure}

In the case of decorated super-Teichm\"uller space of a once punctured torus, with a triangulation cyclically oriented as illustrated in \Cref{fig:super_torus} (Left), flipping any of the three edges results in isomorphic triangulation that is also cyclically oriented.  Consequently, the super $\lambda$-lengths satisfy a super analogue of the Markov equation \cite[Equation 26]{mcshane}:
$$ x^2 + y^2 + z^2 + (xy + yz + xz) \sigma\theta = 3 ( 1 + \sigma \theta) xy z.$$
Note that we can let $\epsilon=\sigma\theta$ and then $\epsilon^2=0$ and we can write solutions as the form $M+\hat{M}\epsilon$.
Mimicing the classical theory, and starting with an initial triangulation where all three $\lambda$-lengths equal $1$, we will refer to the super $\lambda$-lengths associated to arcs of slope $0 < \frac{p}{q} \leq 1$ on the once-punctured torus as {\bf Super Markov Numbers}.  Furthermore, by using the Super Ptolemy exchange relation in the formulas for Vieta jumping,  we can get from one triple of Super Markov Numbers to another by 
$\{x,y,z\} \to \{ (y^2 ~+~ yz\sigma\theta ~+~  z^2)x^{-1}, y,z\} \mathrm{~or~} \{x,y,z\} \to \{x, (x^2 ~+~ xz\sigma\theta ~+~  z^2)y^{-1}, z\}  \mathrm{~or~} \{x,y,z\} \to \{x, y, (x^2 ~+~ xy\sigma\theta ~+~ y^2)z^{-1}\}.$

For the set of ordinary Markov numbers, there is a unique sequence of Vieta jumps that transforms the initial Markov triple correspolnding to rational slopes $\frac{0}{1}$, $\frac{1}{0}$, and $\frac{-1}{1}$ to a Markov triple of Markov numbers corresponding to three compatible rational slopes.  (The one hundred year-old Uniqueness Conjecture \cite{aigner2015markov} goes even further and asserts that for any non-initial Markov number, there is a unique sequence of Vieta jumps that reaches a triple containing that number.)  We lift this indexing by rational slopes to our aforementioned super analogues by letting the Super Markov Triple $\{SM_{a/b}, SM_{c/d}, SM_{e/f}\}$ correspond to the triple obtained by the same sequence of Vieta jumps that takes the initial ordinary Markov triple to that corresponding to the Markov triple for rational slopes $\{a/b, c/d, e/f\}$.   
Using this, the first several Super Markov Numbers arise as triples as:
\begin{eqnarray*}
\{SM_{0/1}, SM_{1/1}, SM_{1/2}\} &=& \{1, \hspace{5em} 2+\sigma\theta, \hspace{1em} 5 + 6\sigma\theta\}, \\
\{SM_{0/1}, SM_{1/2}, SM_{1/3}\} &=& \{1, \hspace{4.5em} 5 + 6\sigma\theta, \hspace{1em} 13 + 26\sigma\theta\}, \\
\{SM_{1/1}, SM_{1/2}, SM_{2/3}\} &=& \{2+\sigma\theta, \hspace{2.5em} 5 + 6\sigma\theta, \hspace{1em} 29 + 74\sigma\theta\}, \\ 
\{SM_{1/1}, SM_{2/3}, SM_{3/4}\} &=& \{2+\sigma\theta, \hspace{1.5em} 29 + 74\sigma\theta, \hspace{1em} 169 + 668\sigma\theta\}, \\
\{SM_{1/2}, SM_{1/3}, SM_{2/5}\} &=& \{5+6\sigma\theta, \hspace{1em} 13 + 26\sigma\theta, \hspace{1em} 194 + 801\sigma\theta\}, \\
\{SM_{1/2}, SM_{2/3}, SM_{3/5}\} &=& \{5+6\sigma\theta, \hspace{1em} 29 + 74\sigma\theta, \hspace{1em} 433 + 2032\sigma\theta\}.
\end{eqnarray*}

\subsection{Matrix Approach to Super Markov Numbers}
\label{sec:holonomy}

As a first approach to computing further Super Markov Numbers, we utilize the Holonomy matrix formulas provided by Theorem \ref{thm:generic}.  To do this, we need to first introduce some notation from the Combinatorics on Words.

Given $0 < \frac{p}{q} \leq 1$ with $\gcd(p,q)=1$, we draw $\gamma$ of slope $p/q$ on the universal cover of the torus.  As described in \cite[Section 1.2]{berstel2009combinatorics}, one can use the sequence of residues $a_n = nq \mod (p+q)$ for $0 \leq n \leq p+q$ to determine the upper Christoffel word, equivalently a lattice path of north and east steps that lies just above line $\gamma$.  In particular, we construct the corresponding
upper Christoffel word of length $(p+q)$ as $w_1w_2\cdots w_{p+q}$ via the characteristic function $w_n = \chi( a_{n-1} < a_n)$.  This determines a lattice path from $(0,0)$ to $(p,q)$ so that the $n$th step is a north step $N$ whenever $a_{n-1} < a_n$ and is an east step $E$ whenever $a_{n-1} > a_n$.

To see why this construction works, for each lattice point $(i,j)$, we associate the rational number $\frac{jq - ip}{q}$.  Notice that $(i,j)$ lies strictly above the line $\gamma$ of slope $\frac{p}{q}$ if and only if the corresponding rational number $\frac{jq - ip}{q} = j - i \frac{p}{q} > 0$.  Thus under this association, and assuming that the point $(i,j)$ lies on the lattice path just above the line $\gamma$ of slope $\frac{p}{q}$, then the point $(i+1,j)$ also lies above the line $\gamma$ if and only if the quantity $jq - (i+1)p$ is still positive.  Thus the ensuing step of the lattice path will be $E$ only when we can subtract $p$ from $(jq - ip)$ and still result in a positive number.  Otherwise, the ensuing step is $N$ and we add $q$ to this quantity.   Since the difference between successive values is either $-p$ or $+q$, these quantities, i.e. the numerators of the resulting fractions, are exactly the values of $a_n = nq \mod (p+q)$.

\begin{example}
If $p/q = 3/5$, we get $(a_0,a_1,\dots, a_8) = (0,5,2,7,4,1,6,3,0)$ and hence the upper Christoffel word is $10100100$, corresponding to the lattice path given by 
$NENEENEE$, which lies just above the arc $\gamma$ of slope $3/5$, and goes through the sequence of points $(0,0), (0,1), (1,1), (1,2), (2,2), (3,2), (3,3), (4,3), (5,3)$ which are associated to the fractions $\frac{0}{5},~ \frac{5}{5},~ \frac{2}{5},~ \frac{7}{5},~ \frac{4}{5}, ~\frac{1}{5}, ~ \frac{6}{5},~ \frac{3}{5},~\frac{0}{5}$,
subtracting $\frac{3}{5}$ when possible, and adding $\frac{5}{5}$ otherwise.
\end{example}

\begin{definition} \label{def:truncated}
Given the Upper Christoffel word $w$ associated to fraction $p/q$, let $\widetilde{w}$ denote the {\bf truncated Christoffel word} obtained by removing initial $1$ and final $0$ from $w$.  Equivalently, we have removed the initial $N$ step and final $E$ step from the corresponding lattice path.
\end{definition}

\begin{remark}
\label{Rem:Truncated}
As explained in \cite[Proposition 4.2]{berstel2009combinatorics}, if we instead truncate the lower Christoffel word, equivalently a lattice path of north and east steps that lies just {\bf below} line $\gamma$, then we recover the same {\bf truncated Christoffel word}, hence why we do not need the descriptor ``upper" or ``lower'' in Definition \ref{def:truncated}.  In particular, if the upper lattice path is $N \widetilde{w} E$ and the lower lattice path is $E \widetilde{w'} N$, then we get $\widetilde{w} = \widetilde{w'}$.
\end{remark}

\begin{prop}
If $\widetilde{w}$ is the truncated Christoffel word associated to the line $\gamma$ of slope $p/q$ on the once-punctured torus, then the Holonomy matrix associated to the path given by line $\gamma$ is given as the matrix product $M_{fin}~M_{p+q} \cdots M_2 M_1$ where for $1 \leq i \leq p+q$, we have
\begin{eqnarray*}
M_i &=& 
\left[ \begin{matrix} 
0 & - x & 0  \\
-\frac{1}{x} & \frac{x^2 + y^2 + z^2}{yz} + x \frac{z + y}{yz} \sigma\theta & 
( \sqrt{ \frac{z}{xy}} - \sqrt{ \frac{y}{xz}}) \ \sigma + \sqrt{\frac{x}{yz}}\theta \\
0 &( \sqrt{ \frac{xz}{y}} - \sqrt{ \frac{xy}{z}}) \ \sigma + x \sqrt{\frac{x}{yz}}\theta & 1 
\end{matrix}\right]
 \mathrm{~~if~}\widetilde{w}_i = 0, \mathrm{~~or} \\
M_i &=& 
\left[ \begin{matrix} 
\frac{z}{x} & y & -\sqrt{\frac{yz}{x}}\sigma \\
\frac{y}{x^2} & \frac{x^2 + y^2}{xy} + \frac{y}{z}\sigma\theta & -\frac{y}{x} \sqrt{\frac{y}{xz}} \sigma + \sqrt{\frac{y}{xz}}\theta \\
- \frac{1}{x} \sqrt{\frac{yz}{x}} \theta & - \sqrt{\frac{xy}{z}} \sigma - y \sqrt{\frac{y}{xz}}\theta & 
1 - \frac{y}{x} \sigma\theta
\end{matrix}\right]
\mathrm{~~if~}\widetilde{w}_i = 1, \mathrm{~and~}
\end{eqnarray*}
\begin{equation*}
M_{fin} = 
\left[ \begin{matrix} 
\frac{y}{z} & \frac{x^2 + y^2}{z} + \frac{x y}{z} \sigma \theta & - y \sqrt{ \frac{y}{xz} } \sigma +\sqrt{\frac{ xy}{z}} \theta \\
-\frac{z}{x^2} & - \frac{y}{x} & \frac{1}{x} \sqrt{\frac{yz}{x}} \sigma \\
- \frac{1}{x}\sqrt{\frac{yz}{x}} \sigma &  - \sqrt{\frac{xy}{z} } \sigma - y \sqrt{\frac{ y}{xz}} \theta &
1 - \frac{y}{x} \sigma\theta
\end{matrix}\right].
\end{equation*}

\end{prop}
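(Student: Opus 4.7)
The plan is to prove the proposition by explicitly computing the holonomy of $\gamma$ in the universal cover, grouping the elementary matrices of Definition~\ref{def:holonomy_matrices} into blocks indexed by the letters of the Christoffel word, and then verifying that each block matches the explicit $3\times 3$ formula claimed.

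\textbf{Step 1 (Geometric setup).} Lift the triangulated torus to $\mathbb{R}^2$, so that the arc $\gamma$ becomes a Euclidean straight segment from $(0,0)$ to $(q,p)$. Because $\gcd(p,q)=1$, the open segment avoids every lattice point, and inside each unit square it traverses, $\gamma$ crosses exactly two triangle edges: first the diagonal $z$-edge of slope $-1$, and then either the top $x$-edge or the right $y$-edge. The sequence of these ``top versus right'' exits across all unit squares traversed is precisely the upper Christoffel word $w=w_1w_2\cdots w_{p+q}$, with the forced initial $N$ and final $E$ reflecting that $\gamma$ departs from and returns to the puncture. Combining this with Remark~\ref{Rem:Truncated} identifies the interior crossings with the letters of $\widetilde{w}$.

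\textbf{Step 2 (Elementary contributions).} Walking along $\gamma$ and visiting the triangles in order, assign at each triangle the corresponding type-$(i)$, type-$(ii)$, and type-$(iii)$ holonomy matrices from Definition~\ref{def:holonomy_matrices}, with orientations inherited from the cyclically oriented triangulation of Figure~\ref{fig:super_torus}~(Left). For each interior letter $\widetilde{w}_i$, collect the elementary matrices accumulated while $\gamma$ passes through the corresponding unit square (crossing one $z$-edge plus one $x$- or $y$-edge, together with the associated angle and diagonal-crossing matrices). A direct multiplication---simplifying with $\sigma^2=\theta^2=0$ and substituting the values $h^i_{jk}=\lambda_{jk}/(\lambda_{ij}\lambda_{ik})$ appropriate to $x,y,z$---should produce the matrix $M_i$ displayed in the statement, in each of the two cases $\widetilde{w}_i\in\{0,1\}$.

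\textbf{Step 3 (Boundary contribution and assembly).} Since $\gamma$ starts and ends at the puncture, the holonomy path on $\Gamma_T$ must begin at a vertex near $(0,0)$ and end at a vertex near $(q,p)$ in the sense of Definition~\ref{def:near}; by Lemma~\ref{lem:ij_independence}, the result is independent of the particular choices of these vertices. The elementary matrices corresponding to the truncated initial $N$ and final $E$ combine into the single matrix $M_{fin}$, absorbing the initial and final fragments of the path together with whatever $\rho$/$\mathrm{Id}$ transitions are needed to reconcile the chosen spin structure. Multiplying left-to-right along $\gamma$ then yields $M_{fin}\,M_{p+q}\cdots M_2 M_1$ as claimed.

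\textbf{Main obstacle.} The technical heart of the argument is the symbolic bookkeeping in Step~2. Each block $M_i$ is a product of several elementary matrices, and one must track (a) the orientation of each edge as lifted from Figure~\ref{fig:super_torus}, which determines whether $A(h^i_{jk}|\theta)$ or its inverse appears and whether $\rho$ or $\mathrm{Id}$ is inserted at each internal-diagonal crossing, and (b) a consistent resolution of the square roots $\sqrt{x},\sqrt{y},\sqrt{z}$ so that the resulting matrix entries match the closed forms given. A convenient cross-check at the end is that the total holonomy must lie in $\mathrm{Osp}(1|2)$ and have $(1,2)$-entry equal to $\pm\lambda_\gamma$, as forced by Theorem~\ref{thm:12-entry} applied in the universal cover.
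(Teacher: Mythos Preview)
Your approach is the same as the paper's: lift to the universal cover, walk along $\gamma$, and group the elementary matrices of Definition~\ref{def:holonomy_matrices} into one block per fundamental domain, with the block type determined by the truncated Christoffel word. Two issues keep the proposal from being a proof.

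\medskip
\textbf{(1) The content of the proof is the explicit factorization, and you omit it.} The paper's argument consists precisely of writing down, for each block, the ordered product of $A$, $E$, $\rho$, and $\id$ factors dictated by the canonical path (one fixed vertex of $\Gamma_T$ near the bottom--left puncture of each fundamental domain, taken as the common start/end of each block). Concretely,
\[
M_i=\begin{cases}
A^{-1}\!\big(\tfrac{z}{xy}\,\big|\,\sigma\big)\,\id\,A^{-1}\!\big(\tfrac{x}{yz}\,\big|\,\theta\big)\,\rho\,A^{-1}\!\big(\tfrac{y}{xz}\,\big|\,\sigma\big)\,E^{-1}(x), & \widetilde{w}_i=0,\\[4pt]
\rho\,A\!\big(\tfrac{y}{xz}\,\big|\,\theta\big)\,\id\,A\!\big(\tfrac{x}{yz}\,\big|\,\sigma\big)\,E(y)\,A\!\big(\tfrac{z}{xy}\,\big|\,\sigma\big), & \widetilde{w}_i=1,
\end{cases}
\]
and
\[
M_{fin}=E(x)\,A\!\big(\tfrac{y}{xz}\,\big|\,\theta\big)\,\id\,A\!\big(\tfrac{x}{yz}\,\big|\,\sigma\big)\,E(y)\,A\!\big(\tfrac{z}{xy}\,\big|\,\sigma\big).
\]
Your Step~2 says only that ``a direct multiplication \dots\ should produce the matrix $M_i$'' without naming the factors or their order; that is exactly the step to be done, and it is where the orientation bookkeeping you flag as the ``main obstacle'' gets resolved.

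\medskip
\textbf{(2) Your description of $M_{fin}$ is off.} You say $M_{fin}$ arises by combining the elementary matrices of the \emph{truncated initial $N$ and final $E$}. It does not: the canonical path is chosen so that it already starts at the standard base vertex inside the first fundamental domain, so there is no separate ``initial'' contribution to absorb. Each $M_i$ for $1\le i\le p+q-2$ carries the path through the $i$-th fundamental domain and across into the $(i{+}1)$-th (through a $y$-edge if $\widetilde{w}_i=0$, through an $x$-edge if $\widetilde{w}_i=1$); $M_{fin}$ carries the path through the \emph{last} fundamental domain alone and terminates at a vertex near the endpoint in the $\theta$-triangle. Comparing the displayed factorizations makes this visible: $M_{fin}$ differs from the $\widetilde{w}_i=1$ block only in that the leading $\rho$ (crossing into the next domain) is replaced by the edge step $E(x)$ that lands at the terminal vertex.
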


\begin{proof}

Following the elementary steps corresponding to the canonical path associated to $\gamma$, we may write out $H$ as the matrix product
$M_{fin}~M_{p+q} \cdots M_2 M_1$ where
$$M_i = A^{-1}\bigg( \frac{z}{xy} ~\vline~ \sigma\bigg) ~ Id_{2|1} ~ A^{-1}\bigg( \frac{x}{yz} ~\vline~ \theta\bigg) ~\rho ~ A^{-1}\bigg( \frac{y}{xz} ~\vline~ \sigma\bigg)
~ E^{-1}(x) \mathrm{~~if~}\widetilde{w}_i = 0, \mathrm{~~or}$$
$$M_i = \rho ~ A\bigg( \frac{y}{xz} ~\vline~ \theta\bigg) ~ Id_{2|1} ~ A\bigg( \frac{x}{yz} ~\vline~ \sigma\bigg) ~ E(y) ~ A\bigg( \frac{z}{xy} ~\vline~ \sigma\bigg) \mathrm{~~if~}\widetilde{w}_i = 1,$$
$$\mathrm{and~~~} M_{fin} = E(x) ~A\bigg( \frac{y}{xz} ~\vline~ \theta\bigg) ~Id_{2|1}~A\bigg( \frac{x}{yz} ~\vline~ \sigma\bigg) ~E(y) ~ A\bigg( \frac{z}{xy} ~\vline~ \sigma\bigg)$$
as in Figures \ref{fig:canon_paths} and \ref{fig:canon_paths0}.
In particular, suppose we begin our canonically defined path in the triangle labeled as $\sigma$ at the angle bordered by sides $x$ and $y$ but at the point closer to side $x$.  We proceed via a sequence of elementary steps so that at the end of each such sequence, we are back in the same relative position.  We multiply together from right-to-left the matrices corresponding to each sequence of elementary steps.  Finally, for any such canonical path, we end with a sequence of steps that ends in the triangle labeled as $\theta$ (again at the angle bordered by sides $x$ and $y$ but at the point closer to side $x$).  Since the places where the line $\gamma$ crosses the arc $x$ then $z$ (as opposed to $y$ then $z$) is encoded by the truncated Christoffel word, we use grouping of elementary steps of one type or the other.  We cross triangles and thus include $\mu$-invariants $\sigma$ and $\theta$ as well as the fermionic reflection $\rho$ accordingly as well.
\end{proof}

\begin{figure}
\begin{tikzpicture}[scale=0.8,every node/.style={sloped,allow upside down}]

    \node [circle,fill=black,inner sep=1pt] (0) at (-7, 3) {};
    \node [circle,fill=black,inner sep=1pt] (1) at (-7, -2) {};
    \node [circle,fill=black,inner sep=1pt] (2) at (-2, 3) {};
    \node [circle,fill=black,inner sep=1pt] (3) at (-2, -2) {};
    \node [] (4) at (-6, -1) {$\sigma$};
    \node [] (5) at (-3, 2) {$\theta$};
    
    \node [] (6) at (-4.5, -3) {$x$};
    \node [] (7) at (-4.5, 4) {$x$};
    \node [] (8) at (-8, 0.5) {$y$};
    \node [] (9) at (-1, 0.5) {$y$};
     \node [] (10) at (-4.5, 1.5) {$z$};               

    \draw [style=blue] (3) to node {\midarrow} (1);
    \draw [style=blue] (2) to node {\midarrow} (0);
    \draw [style=blue] (0) to node {\midarrow} (3);
    \draw [style=blue] (1) to node {\midarrow}  (0);
    \draw [style=blue] (3) to node {\midarrow}  (2);

    \node [circle,fill=black,inner sep=1pt] (11) at (3, 3) {};
    \node [circle,fill=black,inner sep=1pt] (12) at (3, -2) {};
    \node [] (13) at (-1, -1) {$\sigma$};
    \node [] (14) at (2, 2) {$\theta$};
    
    \node [] (15) at (0.5, -3) {$x$};
    \node [] (16) at (0.5, 4) {$x$};
    \node [] (18) at (4, 0.5) {$y$};
     \node [] (19) at (0.5, 1.5) {$z$};               

    \draw [style=blue] (12) to node {\midarrow} (3);
    \draw [style=blue] (12) to node {\midarrow} (11);
    \draw [style=blue] (2) to node {\midarrow} (12);
    \draw [style=blue] (11) to node {\midarrow} (2);            
    
    \node  [circle,fill=black,inner sep=2pt] (21) at (-5.75, -1.75) {};
    \node  [circle,fill=black,inner sep=2pt] (22) at (-6.75, -0.75) {};   
     
    \node  [circle,fill=black,inner sep=2pt] (23) at (-3.25, -1.75) {};            
    \node  [circle,fill=black,inner sep=2pt] (24) at (-2.9, -1.4) {};    
    \node  [circle,fill=black,inner sep=2pt] (25) at (-2.6, -1.1) {}; 
    \node  [circle,fill=black,inner sep=2pt] (26) at (-2.25, -0.75) {};
       
    \node  [circle,fill=black,inner sep=2pt] (27) at (-0.75, -1.75) {};    
    \node  [circle,fill=black,inner sep=2pt] (28) at (-1.75, -0.75) {};

    \draw [style=red] (21) to (23);    
    \draw [style=red] (23) to (24);    
    \draw [style=red] (24) to (25);    
    \draw [style=red] (25) to (26);    
    \draw [style=red] (26) to (28);    
    \draw [style=red] (28) to (27);

\end{tikzpicture}
\begin{tikzpicture}[scale=0.8,every node/.style={sloped,allow upside down}]

    \node [circle,fill=black,inner sep=1pt] (0) at (-7, 3) {};
    \node [circle,fill=black,inner sep=1pt] (1) at (-7, -2) {};
    \node [circle,fill=black,inner sep=1pt] (2) at (-2, 3) {};
    \node [circle,fill=black,inner sep=1pt] (3) at (-2, -2) {};
    \node [] (4) at (-6, -1) {$\sigma$};
    \node [] (5) at (-3, 2) {$\theta$};
    
    \node [] (6) at (-4.5, -3) {$x$};
    \node [] (7) at (-4.5, 4) {$x$};
    \node [] (8) at (-8, 0.5) {$y$};
    \node [] (9) at (-1, 0.5) {$y$};
     \node [] (10) at (-4.5, 1.5) {$z$};               

    \draw [style=blue] (3) to node {\midarrow} (1);
    \draw [style=blue] (2) to node {\midarrow} (0);
    \draw [style=blue] (0) to node {\midarrow} (3);
    \draw [style=blue] (1) to node {\midarrow}  (0);
    \draw [style=blue] (3) to node {\midarrow}  (2);

    \node [circle,fill=black,inner sep=1pt] (11) at (-7, 8) {};
    \node [circle,fill=black,inner sep=1pt] (12) at (-2, 8) {};
    \node [] (13) at (-6, 4) {$\sigma$};
    \node [] (14) at (-3, 7) {$\theta$};
    
    \node [] (16) at (-4.5, 9) {$x$};
    \node [] (17) at (-8, 5.5) {$y$};
    \node [] (18) at (-1, 5.5) {$y$};
     \node [] (19) at (-4.5, 6.5) {$z$};               

    \draw [style=blue] (0) to node {\midarrow} (11);
    \draw [style=blue] (12) to node {\midarrow} (11);
    \draw [style=blue] (2) to node {\midarrow} (12);
    \draw [style=blue] (11) to node {\midarrow} (2);     
    
    \node  [circle,fill=black,inner sep=2pt] (21) at (-5.75, -1.75) {};
    \node  [circle,fill=black,inner sep=2pt] (22) at (-6.75, -0.75) {};    
    \node  [circle,fill=black,inner sep=2pt] (23) at (-6.75, 1.75) {};        
    \node  [circle,fill=black,inner sep=2pt] (24) at (-6.4, 2.1) {};    
    \node  [circle,fill=black,inner sep=2pt] (25) at (-6.1, 2.4) {};    
    \node  [circle,fill=black,inner sep=2pt] (26) at (-5.75, 2.75) {};   
    
    \node  [circle,fill=black,inner sep=2pt] (29) at (-5.75, 3.25) {};    
    \node  [circle,fill=black,inner sep=2pt] (30) at (-6.75, 4.25) {};    
 
    \draw [style=red] (21) to (22);    
    \draw [style=red] (22) to (23);        
    \draw [style=red] (23) to (24);    
    \draw [style=red] (24) to (25);    
    \draw [style=red] (25) to (26);    
    \draw [style=red] (26) to (29);

\end{tikzpicture} 
\caption{Paths for $M_i$ where $\widetilde{w_i} = 0$ (on the left) or $1$ (on the right).}
\label{fig:canon_paths}
\end{figure}
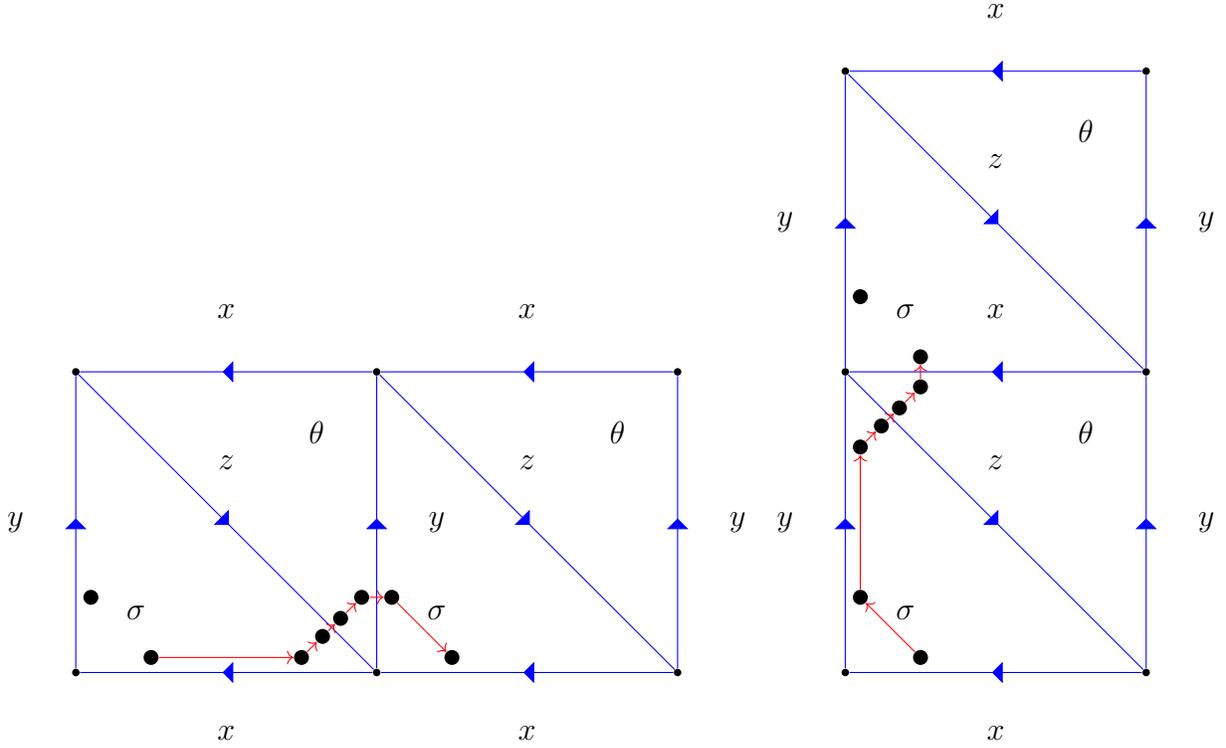

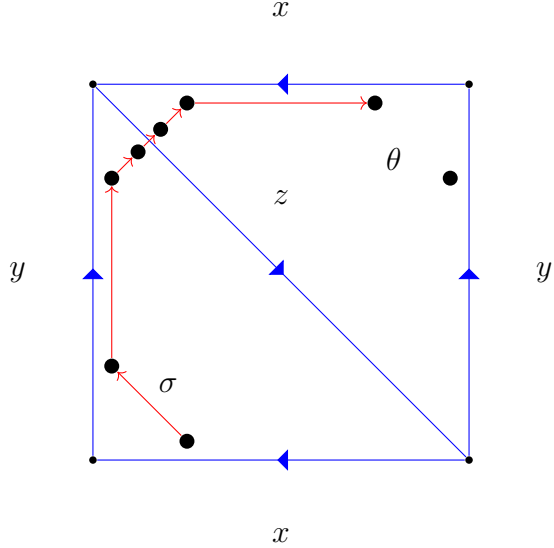
\begin{figure}
\begin{tikzpicture}[scale=1.0,every node/.style={sloped,allow upside down}]
    \node [circle,fill=black,inner sep=1pt] (0) at (-7, 3) {};
    \node [circle,fill=black,inner sep=1pt] (1) at (-7, -2) {};
    \node [circle,fill=black,inner sep=1pt] (2) at (-2, 3) {};
    \node [circle,fill=black,inner sep=1pt] (3) at (-2, -2) {};
    \node [] (4) at (-6, -1) {$\sigma$};
    \node [] (5) at (-3, 2) {$\theta$};
    
    \node [] (6) at (-4.5, -3) {$x$};
    \node [] (7) at (-4.5, 4) {$x$};
    \node [] (8) at (-8, 0.5) {$y$};
    \node [] (9) at (-1, 0.5) {$y$};
     \node [] (10) at (-4.5, 1.5) {$z$};               

    \draw [style=blue] (3) to node {\midarrow} (1);
    \draw [style=blue] (2) to node {\midarrow} (0);
    \draw [style=blue] (0) to node {\midarrow} (3);
    \draw [style=blue] (1) to node {\midarrow}  (0);
    \draw [style=blue] (3) to node {\midarrow}  (2);
    
    \node  [circle,fill=black,inner sep=2pt] (11) at (-5.75, -1.75) {};
    \node  [circle,fill=black,inner sep=2pt] (12) at (-6.75, -0.75) {};    
    \node  [circle,fill=black,inner sep=2pt] (13) at (-6.75, 1.75) {};        
    \node  [circle,fill=black,inner sep=2pt] (14) at (-6.4, 2.1) {};    
    \node  [circle,fill=black,inner sep=2pt] (15) at (-6.1, 2.4) {};    
    \node  [circle,fill=black,inner sep=2pt] (16) at (-5.75, 2.75) {};   
    \node  [circle,fill=black,inner sep=2pt] (17) at (-3.25, 2.75) {};    
    \node  [circle,fill=black,inner sep=2pt] (18) at (-2.25, 1.75) {};    
 
    \draw [style=red] (11) to (12);    
    \draw [style=red] (12) to (13);        
    \draw [style=red] (13) to (14);    
    \draw [style=red] (14) to (15);    
    \draw [style=red] (15) to (16);    
    \draw [style=red] (16) to (17);            
    
\end{tikzpicture}
\caption{Path for $M_{fin}$.}
\label{fig:canon_paths0}
\end{figure}

\begin{remark}
If we let $x=y=z=1$, as is our convention for defining Super Markov Numbers $SM_{p/q}$, in contrast to the super $\lambda$-lengths, the matrices simplify to 
$$M_i = 
\left[ \begin{matrix} 
0 & - 1 & 0  \\
-1 & 3 + 2 \sigma\theta & \theta \\
0 & \theta & 1 
\end{matrix}\right]
\mathrm{~~if~}\widetilde{w}_i = 0, \mathrm{~~or~~}
M_i = 
\left[ \begin{matrix} 
1 & 1 & -\sigma \\
1 & 2 + \sigma\theta & - \sigma + \theta \\
- \theta & - \sigma - \theta & 1 - \sigma\theta
\end{matrix}\right]
\mathrm{~~if~}\widetilde{w}_i = 1,$$
$$ \mathrm{~~and~~~} M_{fin} =
\left[ \begin{matrix} 
1& 2+ \sigma\theta & - \sigma + \theta \\
-1 & - 1 & \sigma \\
- \sigma &  - \sigma - \theta &
1 -  \sigma\theta
\end{matrix}\right].$$
\end{remark}

We then read off $SM_{p/q}$ as the $(1,2)$-entry of the holonomy matrix $H$ corresponding to the line $\gamma$ of slope $p/q$.

\begin{example}
If $p/q = 1/1$, the Upper Christoffel word is $10$ and hence the truncated Christoffel word is empty.  The corresponding Holonomy matrix is simply $M_{fin}$ and the $(1,2)$-entry is indeed $2 + \sigma\theta = SM_{1/1}$ as desired.

If $p/q = 1/2$, the Upper Christoffel word is $100$ and hence the truncated Christoffel word is $0$.  The corresponding Holonomy matrix is 
$$
\left[ \begin{matrix} 
1& 2+  \sigma\theta & - \sigma + \theta \\
-1 & - 1 & \sigma \\
- \sigma &  - \sigma - \theta &
1 -  \sigma\theta
\end{matrix}\right]
\left[ \begin{matrix} 
0 & - 1 & 0  \\
-1 & 3 + 2 \sigma\theta & \theta \\
0 & \theta & 1 
\end{matrix}\right] =
\left[\begin{matrix} 2 + \sigma\theta & 5 + 6 \sigma\theta & -\sigma + 3\theta \\ 
-1 & -2 - \sigma\theta & \sigma - \theta \\
-\sigma - \theta & -3\sigma - \theta & 1 - 2\sigma\theta \end{matrix}\right]$$
 and the $(1,2)$-entry is indeed $5 + 6\sigma\theta = SM_{1/2}$ as desired. 

If $p/q = 2/3$, the Upper Christoffel word is $10100$ and hence the truncated Christoffel word is $010$.  The corresponding Holonomy matrix is 
$$
\left[ \begin{matrix} 
1& 2+  \sigma\theta & - \sigma + \theta \\
-1 & - 1 & \sigma \\
- \sigma &  - \sigma - \theta &
1 -  \sigma\theta
\end{matrix}\right]
\left[ \begin{matrix} 
0 & - 1 & 0  \\
-1 & 3 + 2 \sigma\theta & \theta \\
0 & \theta & 1 
\end{matrix}\right]
\left[ \begin{matrix} 
1 & 1 & -\sigma \\
1 & 2 + \sigma\theta & - \sigma + \theta \\
- \theta & - \sigma - \theta & 1 - \sigma\theta
\end{matrix}\right]
\left[ \begin{matrix} 
0 & - 1 & 0  \\
-1 & 3 + 2 \sigma\theta & \theta \\
0 & \theta & 1 
\end{matrix}\right]
=$$
$$\left[\begin{matrix} 12 + 22\sigma\theta & 29 + 74 \sigma\theta & -8\sigma + 20\theta \\ 
-5 - 6\sigma\theta  & -12 - 22\sigma\theta & 4\sigma - 8\theta \\
-8\sigma - 4\theta & -20\sigma - 8\theta & 1 - 16\sigma\theta \end{matrix}\right]$$
 and the $(1,2)$-entry is indeed $29 + 74\sigma\theta = SM_{2/3}$ as desired. 

For our running example of $p/q=3/5$, we get truncated Christoffel word $010010$ and holonomy matrix
$$\left[\begin{matrix} 179 + 706\sigma\theta & 433 + 2032 \sigma\theta & -112\sigma + 303\theta \\ 
-74 - 237\sigma\theta  & -179 - 706\sigma\theta & 47\sigma - 125\theta \\
-125\sigma - 47\theta & -303\sigma - 112\theta & 1 - 241\sigma\theta \end{matrix}\right].$$
The $(1,2)$-entry is $SM_{3/5} = 433 + 2032\sigma\theta$. 
\end{example}

\subsection{Combinatorial Approach to Super Markov Numbers}

\label{sec:combo}

Our main result regarding Super Markov Numbers is a combinatorial interpretation that extends the previously known snake graph formula for classical Markov numbers.
We use the constructions of Christoffel words and associated lattice paths as in Section \ref{sec:holonomy}.  To such an upper lattice path, we may associate a {\bf snake graph} $G_{p/q}$ by the following procedure.  We first build a word we called the {\bf snake word} by applying the following substitutions.

Step 1: To account for the initial step (which must always be a $N$ step), begin the snake word with a $z$.

Step 2: For the remaining steps in our upper lattice path (except for the final step), for each $N$ step in the upper lattice path, append an $xz$ to the snake word, and for each $E$ step in the truncated lattice path, append a $yz$. 

Step 3: For the final step of the upper lattice path (which must always be an $E$ step), we leave the snake word alone.  Thus, our final snake word has length $2(p+q)-3$.

Step 4: Snake graphs are comprised of tiles, i.e. quadrilaterals or $4$-cycles, that are labeled as $X$, $Y$, or $Z$, where the label determines the corresponding edge weighting and is also used elsewhere in the expansion formulas.

If a tile is labeled $X$, its two horizontal edges have weights of $y$ and its two vertical edges have weights of $z$. 

If a tile is labeled $Y$, its two horizontal edges have weights of $z$ and its two vertical edges have weights of $x$.  

If a tile is labeled $Z$, its two horizontal edges have weights of $y$ and its two vertical edges have weights of $x$.  

Step 5: Beginning with a single tile with label $Z$ for the first letter of the snake word, add tiles to the $N$ or $E$ of the previously placed tile iteratively as follows.  If the ensuing letter is $x$, place a tile labeled $X$ to the $N$.  If the ensuing letter is $y$, place a tile labeled $Y$ to the $E$.  And if the ensuing letter is $z$, place a tile labeled $Z$ continuing the same direction as its predecessor.  The resulting snake graph consists of $2(p+q)-3$ tiles. 

Step 6: Each of these can also be triangulated by drawing an additional diagonal arrow on the NW-SE axis and for usage later, we note that every tile with a label of $Z$ also includes labels by $\mu$-invariants such that $\sigma$ appears in the lower-left triangle and $\theta$ appears in the upper right.  For the tiles with labels of $X$ or $Y$, their two triangles get labels by $\mu$-invariants matching the adjacent triangles in the tiles with a label of $Z$.  This labelling follows from the unfolding map of \cite[Sec. 4.5]{ms10}.

This procedure constructs the snake graph $G_{p/q}$ corresponding to arc $\gamma$ of slope $p/q$, as described in \cite{propp05} or \cite{MSW_11}, since tile $X$ arises whenever $\gamma$ would cross a horizontal arc in the fundamental domain, tile $Y$ arises whenever $\gamma$ would cross a vertical arc in the fundamental domain, and tile $Z$ arises whenever $\gamma$ would cross a diagonal arc in the fundamental domain.  

\begin{example}
In our continuing example of $p/q = 3/5$, corresponding to the upper lattice path $N(ENEENE)E$ (with truncated lattice path $ENEENE$) , the associated snake word is $z ~y z ~x z ~y z ~y z ~x z ~y z$.  These are illustrated in Figures \ref{fig:torus-3-5} and \ref{fig:snake-3-5}. 
\end{example}

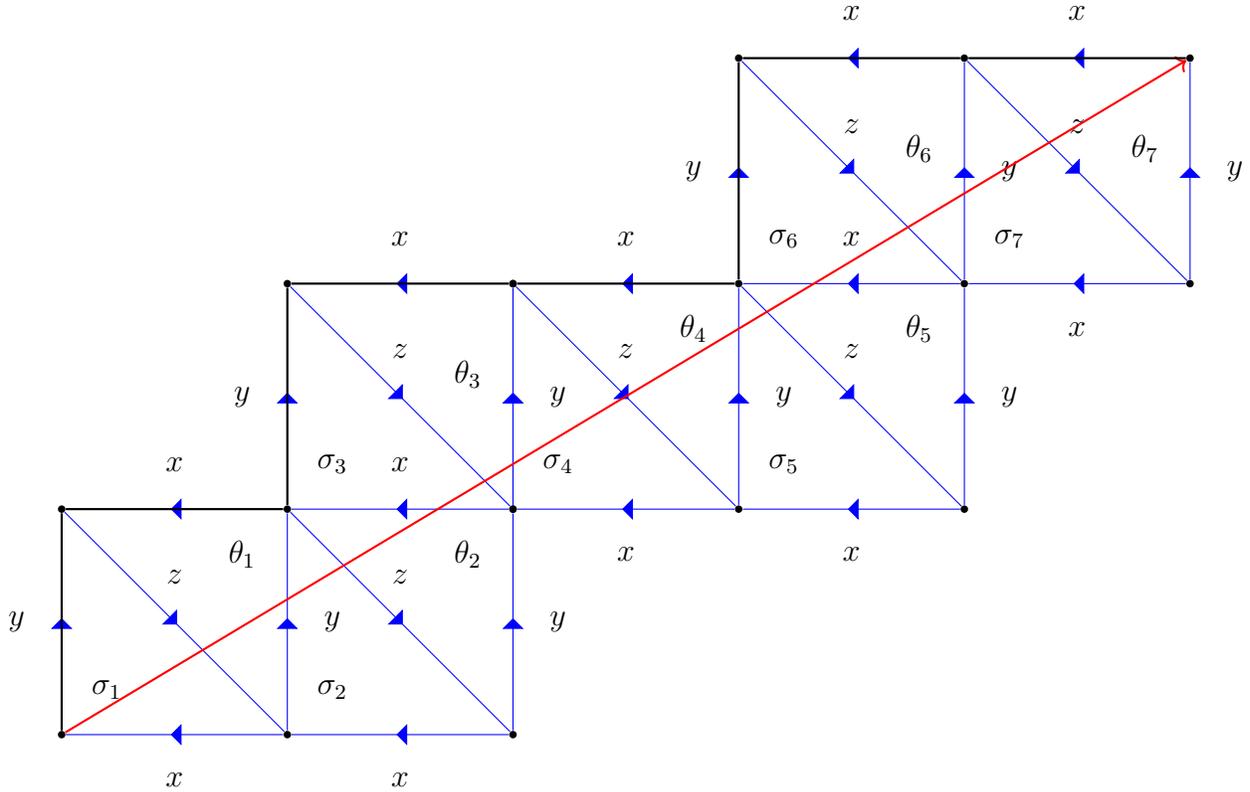
\begin{figure}
\begin{tikzpicture}[scale=0.6,every node/.style={sloped,allow upside down}]

    \node [circle,fill=black,inner sep=1pt] (0) at (-7, 3) {};
    \node [circle,fill=black,inner sep=1pt] (1) at (-7, -2) {};
    \node [circle,fill=black,inner sep=1pt] (2) at (-2, 3) {};
    \node [circle,fill=black,inner sep=1pt] (3) at (-2, -2) {};
    \node [] (4) at (-6, -1) {$\sigma_1$};
    \node [] (5) at (-3, 2) {$\theta_1$};
    
    \node [] (6) at (-4.5, -3) {$x$};
    \node [] (7) at (-4.5, 4) {$x$};
    \node [] (8) at (-8, 0.5) {$y$};
    \node [] (9) at (-1, 0.5) {$y$};
     \node [] (10) at (-4.5, 1.5) {$z$};               

    \draw [style=blue] (3) to node {\midarrow} (1);
    \draw [style=blue] (2) to node {\midarrow} (0);
    \draw [style=blue] (0) to node {\midarrow} (3);
    \draw [style=blue] (1) to node {\midarrow}  (0);
    \draw [style=blue] (3) to node {\midarrow}  (2);

    \node [circle,fill=black,inner sep=1pt] (11) at (3, 3) {};
    \node [circle,fill=black,inner sep=1pt] (12) at (3, -2) {};
    \node [] (13) at (-1, -1) {$\sigma_2$};
    \node [] (14) at (2, 2) {$\theta_2$};
    
    \node [] (15) at (0.5, -3) {$x$};
    \node [] (16) at (0.5, 4) {$x$};
    \node [] (18) at (4, 0.5) {$y$};
     \node [] (19) at (0.5, 1.5) {$z$};               

    \draw [style=blue] (12) to node {\midarrow} (3);
    \draw [style=blue] (12) to node {\midarrow} (11);
    \draw [style=blue] (2) to node {\midarrow} (12);
    \draw [style=blue] (11) to node {\midarrow} (2);      
    
    \node [circle,fill=black,inner sep=1pt] (20) at (3, 8) {};
    \node [circle,fill=black,inner sep=1pt] (21) at (-2, 8) {};
    \node [] (22) at (-1, 4) {$\sigma_3$};
    \node [] (23) at (2, 6) {$\theta_3$};
    
    \node [] (25) at (0.5, 9) {$x$};
    \node [] (26) at (-3, 5.5) {$y$};
    \node [] (27) at (4, 5.5) {$y$};
     \node [] (28) at (0.5, 6.5) {$z$};               

    \draw [style=blue] (20) to node {\midarrow} (21);
    \draw [style=blue] (11) to node {\midarrow} (20);    
    \draw [style=blue] (2) to node {\midarrow} (21);    
    \draw [style=blue] (21) to node {\midarrow} (11);    
    
    \node [circle,fill=black,inner sep=1pt] (30) at (3, 8) {};
    \node [circle,fill=black,inner sep=1pt] (31) at (3, 3) {};
    \node [circle,fill=black,inner sep=1pt] (32) at (8, 8) {};
    \node [circle,fill=black,inner sep=1pt] (33) at (8, 3) {};
    \node [] (34) at (4, 4) {$\sigma_4$};
    \node [] (35) at (7, 7) {$\theta_4$};
    
    \node [] (36) at (5.5, 2) {$x$};
    \node [] (37) at (5.5, 9) {$x$};
    \node [] (39) at (9, 5.5) {$y$};
     \node [] (40) at (5.5, 6.5) {$z$};               

    \draw [style=blue] (33) to node {\midarrow} (31);
    \draw [style=blue] (32) to node {\midarrow} (30);
    \draw [style=blue] (30) to node {\midarrow} (33);
    \draw [style=blue] (31) to node {\midarrow}  (30);
    \draw [style=blue] (33) to node {\midarrow}  (32);

    \node [circle,fill=black,inner sep=1pt] (41) at (13, 8) {};
    \node [circle,fill=black,inner sep=1pt] (42) at (13, 3) {};
    \node [] (43) at (9, 4) {$\sigma_5$};
    \node [] (44) at (12, 7) {$\theta_5$};
    
    \node [] (45) at (10.5, 2) {$x$};
    \node [] (46) at (10.5, 9) {$x$};
    \node [] (48) at (14, 5.5) {$y$};
     \node [] (49) at (10.5, 6.5) {$z$};               

    \draw [style=blue] (42) to node {\midarrow} (33);
    \draw [style=blue] (42) to node {\midarrow} (41);
    \draw [style=blue] (32) to node {\midarrow} (42);
    \draw [style=blue] (41) to node {\midarrow} (32);      
    
    \node [circle,fill=black,inner sep=1pt] (50) at (13, 13) {};
    \node [circle,fill=black,inner sep=1pt] (51) at (8, 13) {};
    \node [] (52) at (9, 9) {$\sigma_6$};
    \node [] (53) at (12, 11) {$\theta_6$};
    
    \node [] (55) at (10.5, 14) {$x$};
    \node [] (56) at (7, 10.5) {$y$};
    \node [] (57) at (14, 10.5) {$y$};
     \node [] (58) at (10.5, 11.5) {$z$};               

    \draw [style=blue] (50) to node {\midarrow} (51);
    \draw [style=blue] (41) to node {\midarrow} (50);    
    \draw [style=blue] (32) to node {\midarrow} (51);    
    \draw [style=blue] (51) to node {\midarrow} (41);              
          
    \node [circle,fill=black,inner sep=1pt] (60) at (18, 8) {};
    \node [circle,fill=black,inner sep=1pt] (61) at (18, 13) {};
    \node [] (62) at (14, 9) {$\sigma_7$};
    \node [] (63) at (17, 11) {$\theta_7$};
    
    \node [] (64) at (15.5, 7) {$x$};
    \node [] (65) at (15.5, 14) {$x$};
    \node [] (67) at (19, 10.5) {$y$};
     \node [] (68) at (15.5, 11.5) {$z$};               

    \draw [style=blue] (60) to node {\midarrow} (61);
    \draw [style=blue] (50) to node {\midarrow} (60);    
    \draw [style=blue] (61) to node {\midarrow} (50);  
    \draw [style=blue] (60) to node {\midarrow} (41);        
          
    \draw [style=red,thick] (1) to (61);    
    
    \draw [color=black, thick] (1) to (0);
    \draw [color=black, thick] (0) to (2);
    \draw [color=black, thick] (2) to (21);    
    \draw [color=black, thick] (21) to (20);
    \draw [color=black, thick] (20) to (32);            
    \draw [color=black, thick] (32) to (51);            
    \draw [color=black, thick] (51) to (50);            
    \draw [color=black, thick] (50) to (61);                      
\end{tikzpicture}
\caption{Arc $\gamma$ of slope $3/5$ (in red) crossing the triangulation of a once-punctured torus lifted to its universal cover, with upper lattice path $NENEENEE$ illustrated (in black)  For each copy of the fundamental domain, we use a different subscript to label the two associated $\mu$-invariants.}
\label{fig:torus-3-5}
\end{figure}

\begin{figure}
\begin{tikzpicture}[scale=0.3,every node/.style={sloped,allow upside down}]

    \node [circle,fill=black,inner sep=1pt] (0) at (0, 0) {};
    \node [circle,fill=black,inner sep=1pt] (1) at (5, 0) {};
    \node [circle,fill=black,inner sep=1pt] (2) at (10, 0) {};
    \node [circle,fill=black,inner sep=1pt] (3) at (15, 0) {};
    \node [circle,fill=black,inner sep=1pt] (4) at (0, 5) {};    
    \node [circle,fill=black,inner sep=1pt] (5) at (5, 5) {};
    \node [circle,fill=black,inner sep=1pt] (6) at (10, 5) {};
    \node [circle,fill=black,inner sep=1pt] (7) at (15, 5) {};
    \node [circle,fill=black,inner sep=1pt] (8) at (10, 10) {};
    \node [circle,fill=black,inner sep=1pt] (9) at (15, 10) {};    
    \node [circle,fill=black,inner sep=1pt] (10) at (20, 10) {};
    \node [circle,fill=black,inner sep=1pt] (11) at (25, 10) {};    
    \node [circle,fill=black,inner sep=1pt] (12) at (30, 10) {};
    \node [circle,fill=black,inner sep=1pt] (13) at (35, 10) {};    
    \node [circle,fill=black,inner sep=1pt] (14) at (10, 15) {};
    \node [circle,fill=black,inner sep=1pt] (15) at (15, 15) {};    
    \node [circle,fill=black,inner sep=1pt] (16) at (20, 15) {};
    \node [circle,fill=black,inner sep=1pt] (17) at (25, 15) {};    
    \node [circle,fill=black,inner sep=1pt] (18) at (30, 15) {};
    \node [circle,fill=black,inner sep=1pt] (19) at (35, 15) {};
    \node [circle,fill=black,inner sep=1pt] (20) at (30, 20) {};
    \node [circle,fill=black,inner sep=1pt] (21) at (35, 20) {};
    \node [circle,fill=black,inner sep=1pt] (22) at (40, 20) {};
    \node [circle,fill=black,inner sep=1pt] (23) at (45, 20) {};
    \node [circle,fill=black,inner sep=1pt] (24) at (30, 25) {};
    \node [circle,fill=black,inner sep=1pt] (25) at (35, 25) {};    
    \node [circle,fill=black,inner sep=1pt] (26) at (40, 25) {};
    \node [circle,fill=black,inner sep=1pt] (27) at (45, 25) {};    
    
    \draw [style=blue] (0) to (3);
    \draw [style=blue] (4) to (7);
    \draw [style=blue] (8) to (13);
    \draw [style=blue] (14) to (19);
    \draw [style=blue] (20) to (23);              
    \draw [style=blue] (24) to (27);       

    \draw [style=blue] (0) to (4);
    \draw [style=blue] (1) to (5);       
    \draw [style=blue] (2) to (14);   
    \draw [style=blue] (3) to (15);           
    \draw [style=blue] (10) to (16);
    \draw [style=blue] (11) to (17);          
    \draw [style=blue] (12) to (24);
    \draw [style=blue] (13) to (25);
    \draw [style=blue] (22) to (26);
    \draw [style=blue] (23) to (27);

    \node [] (28) at (2.5, 2.5) {$Z$};
    \node [] (29) at (7.5, 2.5) {$Y$};
    \node [] (30) at (12.5, 2.5) {$Z$};
    \node [] (31) at (12.5, 7.5) {$X$};
    \node [] (32) at (12.5, 12.5) {$Z$};
    \node [] (33) at (17.5, 12.5) {$Y$};
    \node [] (34) at (22.5, 12.5) {$Z$};
    \node [] (35) at (27.5, 12.5) {$Y$};
    \node [] (36) at (32.5, 12.5) {$Z$};
    \node [] (37) at (32.5, 17.5) {$X$};
    \node [] (38) at (32.5, 22.5) {$Z$};
    \node [] (39) at (37.5, 22.5) {$Y$};
    \node [] (40) at (42.5, 22.5) {$Z$};

    \draw [style=blue,dotted] (1) to (4);
    \draw [style=blue,dotted] (2) to (5);
    \draw [style=blue,dotted] (3) to (6);
    \draw [style=blue,dotted] (7) to (8);
    \draw [style=blue,dotted] (9) to (14);              
    \draw [style=blue,dotted] (10) to (15);       
    \draw [style=blue,dotted] (11) to (16);
    \draw [style=blue,dotted] (12) to (17);
    \draw [style=blue,dotted] (13) to (18);
    \draw [style=blue,dotted] (19) to (20);
    \draw [style=blue,dotted] (21) to (24);              
    \draw [style=blue,dotted] (22) to (25);       
    \draw [style=blue,dotted] (23) to (26);
          
    \node [] (28) at (1, 1) {$\sigma_1$};
    \node [] (29) at (6, 1) {$\theta_1$};
    \node [] (30) at (11, 1) {$\sigma_2$};
    \node [] (31) at (11, 6) {$\theta_2$};
    \node [] (32) at (11, 11) {$\sigma_3$};
    \node [] (33) at (16, 11) {$\theta_3$};
    \node [] (34) at (21, 11) {$\sigma_4$};
    \node [] (35) at (26, 11) {$\theta_4$};
    \node [] (36) at (31, 11) {$\sigma_5$};
    \node [] (37) at (31, 16) {$\theta_5$};
    \node [] (38) at (31, 21) {$\sigma_6$};
    \node [] (39) at (36, 21) {$\theta_6$};
    \node [] (40) at (41, 21) {$\sigma_7$};

    \node [] (28) at (4, 4) {$\theta_1$};
    \node [] (29) at (9, 4) {$\sigma_2$};
    \node [] (30) at (14, 4) {$\theta_2$};
    \node [] (31) at (14, 9) {$\sigma_3$};
    \node [] (32) at (14, 14) {$\theta_3$};
    \node [] (33) at (19, 14) {$\sigma_4$};
    \node [] (34) at (24, 14) {$\theta_4$};
    \node [] (35) at (29, 14) {$\sigma_5$};
    \node [] (36) at (34, 14) {$\theta_5$};
    \node [] (37) at (34, 19) {$\sigma_{6}$};
    \node [] (38) at (34, 24) {$\theta_6$};
    \node [] (39) at (39, 24) {$\sigma_7$};
    \node [] (40) at (44, 24) {$\theta_7$};

\end{tikzpicture}
\caption{Snake Graph corresponding to the arc $\gamma$ of slope $3/5$ with tiles corresponding to crossings of $\gamma$ with the lifted triangulation as in \Cref{fig:torus-3-5}.}
\label{fig:snake-3-5}
\end{figure}
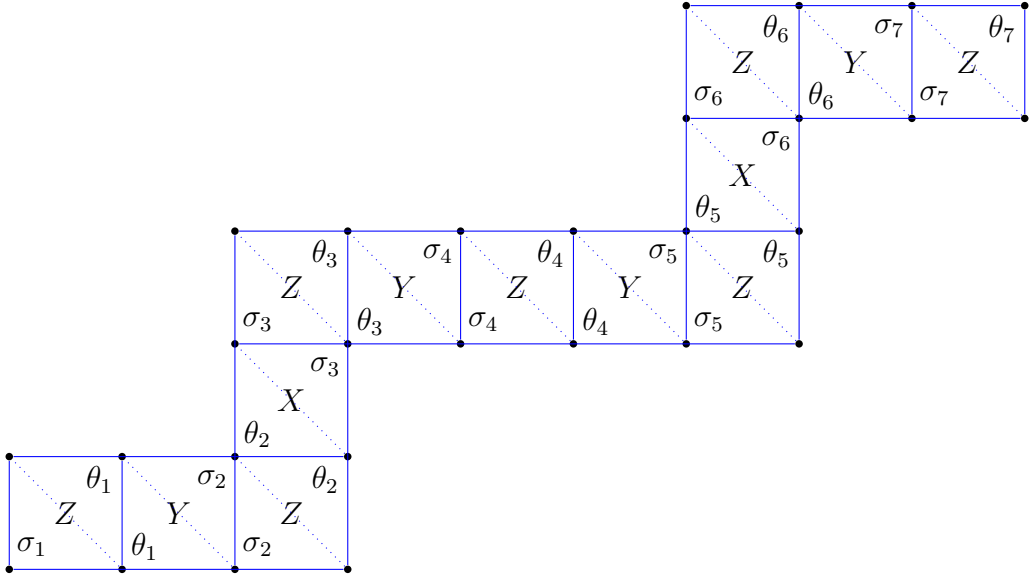

\begin{remark}
For our present application of expansion formulas for super $\lambda$-lengths, we build a snake graph $G_{p/q}$ as above, but instead of utilizing only the two $\mu$-invariants $\sigma$ and $\theta$ throughout the entire graph, we consider the alphabet of $\mu$-invariants $ \sigma_1,\sigma_2, \dots, \sigma_{p+q-1},~~$  $\theta_1, \theta_2, \dots, \theta_{p+q-1}$ so that reading from left-to-right, the $i$th iteration of the tile with label $Z$ uses $\mu$-invariants $\sigma_i$ and $\theta_i$.  We will later identify all $\sigma_i$'s as equivalent and all $\theta_i$'s as equivalent, so that in particular $\sigma_i \sigma_j = 0$ and $\theta_i \theta_j = 0$ for all choices of $i$ and $j$.  However, this notation distinguishing between $\mu$-invariants will be useful during our exposition.
\end{remark}

Since the once-punctured torus has no boundary, we also have less freedom in choosing an orientation that corresponds to our spin structure.  We pick an orientation that is cyclically oriented on the two triangles in our standard triangulation, as we previously illustrated in \Cref{fig:super_torus}.  In particular, we orient triangles clockwise on the lower left triangle (corresponding to $\mu$-invariant $\sigma$) and orient triangles counter-clockwise on the upper right triangle (corresponding to $\mu$-invariant $\theta$).  An equivalent way of describing this orientation is upward on vertical arcs, leftward on horizontal arcs, and southeastern on diagonal arcs, and this orientation is consistent when identifying opposite sides of the torus to each other.

We recall from \cite{propp05} that the classical Markov number $M_{p/q}$ counts the number of perfect matching (a.k.a. dimer covers) of the snake graph $G_{p/q}$ constructed from $p/q$ as above.  (See also the construction of the snake graph $G_{p/q}$ from \cite{rabideau2020continued} where it is built by placing tiles to the right in positions corresponding to $0$'s in the Christoffel word and placing tiles on the top in position corresponding to $1$'s in the Christoffel word.)  Note also that each tile of $G_{p/q}$ can be associated with either initial arc $x$, $y$, or $z$, determined by the order in which arcs of the initial triangulation are crossed by the arc of slope $\frac{p}{q}$.

We now extend this definition to instead enumerate double dimer covers (which are multisets of edges such that every vertex is incident to exactly two distinguished edges) of the snake graph $G_{p/q}$.  Every ordinary perfect matching of $G_{p/q}$ corresponds to a double dimer cover by simply doubling every utilized edge.  We additionally get other double dimer covers which contain cycles (where each edge along the cycle is utilized once) as well as (possibly) doubled edges.  With snake graphs now defined, we can complete our definition of double dimer cover weights, as based on \cite[Definition 4.4]{moz22}.  Given double dimer cover $M$, we note that a cycle $C$ as a component of $M$ always encircles a number of triangles in the snake graph as embedded inside the universal cover.  Thus, there is a triangle in the lower-left of such a cycle $C$ and a triangle in the upper-right of such as cycle $C$.  We thus define $wt(C) = \mu_i \mu_j$ where $\mu_i$ is the $\mu$-invariant of the lower left and $\mu_j$ is the $\mu$-invariant of the upper right.  Then
$$wt(M) = \prod_{e \in M} wt(e) \prod_{\mathrm{Cycle~}C \mathrm{~of~}M} wt(C).$$ We lastly note that the product $\prod_{\mathrm{Cycle~}C \mathrm{~of~}M} wt(C)$ is re-ordered so that it is expressed according to the positive ordering defined in Section \ref{sec:prelim}.

As a consequence of 
the way that odd elements (i.e. $\mu$-invariants) are defined for the once-punctured torus, not all possible double dimer covers contribute to our enumeration. Any double dimer cover containing two or more cycles contributes zero to this enumeration and double dimers containing a single cycle are weighted as described in \Cref{def:markov_soul}.

\begin{definition}
\label{def:markov_soul}
The value of $\hat{M}_{p/q}$ is the signed enumeration of double dimer covers on $G_{p/q}$ containing a single cycle of odd length as follows:

(\# containing a single cycle whose leftmost tile is $X$ and whose rightmost tile is also $X$)  

+ (\# containing a single cycle whose leftmost tile is $Y$ and whose rightmost tile is also $Y$)  

+ (\# containing a single cycle whose leftmost tile is $Z$ and whose rightmost tile is also $Z$)  

- (\# containing a single cycle whose leftmost tile is $X$ but whose rightmost tile is $Y$)  

- (\# containing a single cycle whose leftmost tile is $Y$ but whose rightmost tile is $X$).  
\end{definition}

We deduce the follow combinatorial interpretation, in terms of double dimer covers, of Super Markov numbers accordingly.

\begin{theorem}
\label{thm:supermarkov}
Let $\gamma_{p/q}$ be an arc of slope $\frac{p}{q}$, such that $0 < \frac{p}{q} \leq 1$, on the once-punctured torus, where we have chosen the standard initial triangulation corresponding to $\{\frac{0}{1}, \frac{1}{0}, \frac{-1}{1}\}$ and set the (super) $\lambda$-lengths of these three initial arcs all equal to $1$.  Then the super $\lambda$-length of $\gamma_{p/q}$ 
is given by the Super Markov number $SM_{p/q} = M_{p/q} + \hat{M}_{p/q} \sigma \theta$ where $M_{p/q}$ is the classical Markov 
number associated to the arc of slope $\frac{p}{q}$ and $\hat{M}_{p/q}$ is the signed enumeration given in \Cref{def:markov_soul}.
\end{theorem}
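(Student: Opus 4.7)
The plan is to extract $SM_{p/q}$ as the $(1,2)$-entry of the holonomy matrix product $H = M_{\mathrm{fin}}\,M_{p+q}\cdots M_2\,M_1$ computed in Section~\ref{sec:holonomy}, and then identify each surviving monomial of this entry with a double dimer cover of the snake graph $G_{p/q}$. Writing the expansion as $SM_{p/q} = M_{p/q} + \hat M_{p/q}\,\sigma\theta$, the body part is obtained by setting $\sigma = \theta = 0$ in every $M_i$: the resulting $2 \times 2$ transfer matrices are exactly the ones appearing in Propp's snake-graph treatment of classical Markov numbers \cite{propp05} and in \cite{MSW_11}, whose $(1,2)$-entry counts perfect matchings of $G_{p/q}$ and at $x=y=z=1$ recovers the classical Markov number $M_{p/q}$.

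For the soul $\hat M_{p/q}$, I would expand $H$ while using the indexed $\mu$-invariants $\sigma_i,\theta_i$ from the remark preceding the theorem statement. Since $\sigma_i\sigma_j=0$ and $\theta_i\theta_j=0$ after the eventual identifications, any surviving monomial in the $\sigma\theta$-coefficient must either pick a $\sigma\theta$ directly from a single factor, or pick one $\sigma_i$ from a factor $M_i$ and one $\theta_j$ from a factor $M_j$, with all other factors contributing $\sigma\theta$-free entries. This already forces double dimer covers containing two or more cycles to contribute nothing, as claimed.

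The key geometric step is a bijection, extending the one in \cite[Theorem 6.2]{moz22}, between these surviving monomials and double dimer covers of $G_{p/q}$ containing exactly one cycle. In each tile, the choice of matrix entry corresponds, via the unfolding map of \cite[Sec.~4.5]{ms10}, to a local configuration of edges; the tile where $\sigma_i$ is selected becomes the lower-left extreme of the cycle, the tile where $\theta_j$ is selected becomes its upper-right extreme, and the $\sigma\theta$-free entries from the intermediate factors fill in the interior of the cycle together with a perfect matching of the complementary region. Because the reflection matrix $\rho$ contributes a sign across each internal edge crossed, and because of the way each $A(h^i_{jk}|\theta)$ distributes its odd entries, the resulting cycle is forced to be of odd length.

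The main obstacle is tracking signs, which enter from three sources: the $-1$'s inside the fermionic reflection $\rho$; the cyclic orientation on the torus shown in Figure~\ref{fig:super_torus}, which dictates the positive ordering of Definition~\ref{def:pos_ordering}; and the anticommutation $\sigma_i\theta_j = -\theta_j\sigma_i$ required to re-express each product in positive order. A local analysis over the three boundary tile types $X$, $Y$, $Z$ must be carried out to show that these signs combine exactly into the pattern of Definition~\ref{def:markov_soul}, with same-type endpoints contributing $+1$ and mixed $X$/$Y$ endpoints contributing $-1$. An alternative route, which may make the sign bookkeeping cleaner, is an induction on the length of the Vieta flip sequence using the super Ptolemy relation~\eqref{eqn:super_ptolemy_lambda} together with the snake-graph exchange relations of \cite{MSW_11}; here the base cases $p/q \in \{1/1, 1/2, 2/3\}$ are checked against the explicit matrix computations in Section~\ref{sec:holonomy}, and the inductive step mirrors how the snake graph $G_{p/q}$ grows under a flip.
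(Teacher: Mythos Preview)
Your plan diverges from the paper's proof in a substantive way, and the divergence is exactly where the real difficulty lies.

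The paper does \emph{not} expand the holonomy product $M_{\mathrm{fin}}M_{p+q}\cdots M_1$ directly. Instead it lifts $\gamma_{p/q}$ to the universal cover, cuts out the polygon $T_\gamma$ of triangles it crosses, and invokes the already-established double-dimer formula \cite[Theorem~5.2]{moz22b} for super $\lambda$-lengths in a polygon equipped with the \emph{default} orientation. The entire content of the proof is then concentrated in three lemmas (Lemmas~\ref{lem:snake_word}--\ref{lem:pos_ordering} and Corollary~\ref{cor:mult_pairs}) that analyze how the cyclic orientation on the torus, when lifted to $T_\gamma$, differs from the default orientation: one must negate precisely the $\sigma_i$ for which $i$ is the position of a non-initial $1$ in the upper Christoffel word. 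This negation reshuffles the positive ordering in a specific way, and Corollary~\ref{cor:mult_pairs} then shows that a product $\sigma_j\theta_k$ projects to $+\sigma\theta$ or $-\sigma\theta$ according to whether the endpoints of the cycle sit in tiles of matching type ($X$--$X$, $Y$--$Y$, $Z$--$Z$) or mismatched type ($X$--$Y$, $Y$--$X$). That is the mechanism producing the signed enumeration of Definition~\ref{def:markov_soul}.

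Your proposal misses this mechanism. You invoke ``the cyclic orientation \dots\ which dictates the positive ordering of Definition~\ref{def:pos_ordering},'' but the double-dimer formula you want to match against is only proved for the default orientation, and the lifted cyclic orientation is \emph{not} default on $T_\gamma$. Without Lemma~\ref{lem:negations} (identifying which $\sigma_i$'s get negated, governed by the Christoffel word) and Lemma~\ref{lem:pos_ordering} (the resulting shuffle $(\Sigma_y\,\Sha\,\Theta_x) > (\Theta_y\,\Sha\,-\Sigma_x)$), the phrase ``a local analysis over the three boundary tile types $X,Y,Z$ must be carried out'' is not a step but a restatement of the problem. Similarly, your claim that ``the resulting cycle is forced to be of odd length'' because of $\rho$ and the $A$-matrices is not how the paper obtains it: the parity restriction comes from the identification $\sigma_i\sigma_j=0=\theta_i\theta_j$, which kills exactly the cycles running $Z$-to-$X$, $Z$-to-$Y$, or their reverses---and these are precisely the even-length ones in this snake graph. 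Your alternative inductive route via Vieta jumping is plausible in principle but would require controlling how $\hat M_{p/q}$ transforms under the super Ptolemy relation at the level of signed double-dimer enumerations, which is no easier than the direct argument.
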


We prove this result in \Cref{sec:main_proof} breaking it down into a series of lemmas.

\section{Proof of \Cref{thm:supermarkov}}
\label{sec:main_proof}

We begin by describing how the snake words constructed in Steps 1 - 3 of \Cref{sec:combo} allow us to identify a polygon and cut it out of the universal cover of the triangulated once-punctured torus.

\begin{lemma}
\label{lem:snake_word}
Given the upper lattice path defined by fraction $p/q$ (where $0 < p/q \leq 1$), the snake word defined by $N \to xz$ and $E \to yz$ (with special rules of $z$ for the first step of $N$ and no letter for the final step of $E$) describes the sequence of initial arcs crossed by $\gamma_{p/q}$, the line of slope $p/q$ in the universal cover of the triangulated once-punctured torus.
\end{lemma}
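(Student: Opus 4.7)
My plan is to argue directly that as $\gamma_{p/q}$ is traversed from $(0,0)$ to $(q,p)$ in the universal cover, the triangulation edges it crosses appear in exactly the order prescribed by the snake word. I will first count, then track the arc through its sequence of unit squares, then identify this sequence with the steps of the upper Christoffel word.

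Since $\gcd(p,q)=1$, the arc $\gamma_{p/q}$ meets no interior lattice points; it crosses $p-1$ horizontal edges (all $x$-arcs), $q-1$ vertical edges (all $y$-arcs), and passes through exactly $p+q-1$ unit squares, crossing the anti-diagonal ($z$-arc) in each. The total of $2(p+q)-3$ crossings matches both the length of the snake word and the multiplicities of its three letters. Because $0 < p/q \leq 1$, inside each unit square $S_k$ the arc enters through the closed lower-left triangle, crosses the $z$-diagonal once, then exits through the upper-right triangle. Consecutive squares $S_k,\, S_{k+1}$ share exactly one edge, which is vertical (a $y$-arc) if $S_{k+1}$ lies to the right of $S_k$, and horizontal (an $x$-arc) if $S_{k+1}$ lies above. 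Hence the full crossing sequence has the shape $z,\, m_1,\, z,\, m_2,\, z,\, \ldots,\, m_{p+q-2},\, z$, where each $m_k \in \{x,y\}$ encodes the direction of the inter-square move from $S_k$ to $S_{k+1}$.

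It remains to identify $m_1,\ldots,m_{p+q-2}$ with the truncated Christoffel word. Let $h(i,j):=jq-ip$, so $(i,j)$ is above $\gamma_{p/q}$ exactly when $h(i,j)>0$; note that $h$ evaluated at the $m$-th lattice-path vertex equals $a_m$. I will prove by induction on $m$ that, for $1 \leq m \leq p+q-1$, the lattice path after $m$ steps is at the upper-left corner of $S_m$. The base case $m=1$ is immediate: the first step is forced to be $N$, moving $(0,0)\mapsto(0,1)$, the upper-left corner of $S_1=[0,1]^2$. For the inductive step, write $S_m=[i_m,i_m+1]\times[j_m-1,j_m]$ with upper-left at $(i_m,j_m)$, so the upper-right corner of $S_m$ has height $a_m-p$. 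The Christoffel rule says step $m+1$ is $E$ iff $a_m \geq p$; on the geometric side, the arc exits $S_m$ to the right iff its upper-right corner lies above the arc, i.e.\ $a_m-p>0$ (the equality $a_m=p$ is excluded by $\gcd(p,q)=1$). These two conditions agree, and a right-exit places $S_{m+1}$'s upper-left corner at $(i_m+1,j_m)$, exactly the new lattice-path position; the $N$-step / up-exit case is symmetric.

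With this claim in hand the rest is assembly. Step 1 of the snake-word construction contributes $z$, matching the first diagonal crossing (in $S_1$). For $m=2,\ldots,p+q-1$, the inductive claim says the $m$-th letter of the Christoffel word is $E$ iff the move $S_{m-1}\to S_m$ is rightward (a $y$-arc crossing) and $N$ iff it is upward (an $x$-arc crossing); Step 2 of the construction then appends $yz$ or $xz$, which is precisely the inter-square crossing followed by the diagonal crossing inside $S_m$. The final $E$ step is discarded (Step 3), consistent with $\gamma_{p/q}$ terminating at the corner $(q,p)$ of $S_{p+q-1}$ just after its last $z$-crossing. The main obstacle will be the inductive step above, which requires simultaneously reading off the Christoffel rule and the geometric exit direction via the height function; once this agreement is verified, the remaining bookkeeping is routine.
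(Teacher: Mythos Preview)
Your proof is correct and follows essentially the same approach as the paper's: both track the arc through its sequence of unit squares and identify the $m$-th lattice-path vertex with the upper-left corner of the $m$-th square visited, so that the Christoffel step matches the arc's inter-square move. Your version is more explicit, carrying out via the height function $h(i,j)=jq-ip$ and a clean induction what the paper does by introducing the points $b_i$ on $\gamma$ and arguing narratively.
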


\begin{proof}
Let $a_0,a_1,\dots, a_{p+q}$ denote the points incident to adjacent steps of the upper lattice paths, as described in the beginning of \Cref{sec:holonomy}.  We then define points $b_0, b_1, \dots, b_{p+q}$ on the line segment $\gamma_{p/q}$ by setting $b_0=a_0$, $b_{p+q}=a_{p+q}$ and for each $1 \leq i \leq p+q-1$, we set $b_i$ to be a point in the interior of the upper right triangle of the triangulated fundamental domain of the once-punctured torus incident to $a_i$.  See \Cref{fig:snake_word_proof}.

Since the slope of $\gamma_{p/q}$ is assumed to be positive, the initial arc first crossed by $\gamma_{p/q}$ is necessarily $z$.  If the slope is $\frac{1}{1}$, this is in fact the only arc crossed by $\gamma$ and we verify that the snake word of $z$ indeed corresponds to the upper lattice path of $NE$.  Next, in addition to assuming that the slope of $\gamma_{p/q}$ is positive, we assume that it is bounded between $0$ and $1$.  
As a consequence, the second and third of the initial arcs crossed by $\gamma$ are necessarily $y$ then $z$ (in that order) and the first two steps of the upper lattice path must be $NE$.  We observe that after following along line segment $\gamma_{p/q}$ so that we have crossed $z$ and $y$, we are now at the point $b_2$.   We see that the prefix of such snake words is $zyz$, which matches the construction from an upper lattice path with prefix $NE$.  

Following $\gamma_{p/q}$ to the point $b_3$, the next arcs crossed by $\gamma$ are then either $zx$ or $zy$ (in that order) depending on whether the upper lattice path's next step is $N$ or $E$, respectively.  We continue applying this rule iteratively until we reach the point $b_{p+q-1}$.  Each time, the steps of the upper lattice path exactly matches the next two letters of the constructed snake word.  Since $b_{p+q-1}$ already lies in the interior of the upper right triangle of the fundamental domain, following $\gamma_{p/q}$ from $b_{p+q-1}$ to $b_{p+q}$, the endpoint of $\gamma_{p/q}$ does not contribute any further crossed arcs or elements to the snake word.  At the same time, the final step of the upper lattice path from $a_{p+q-1}$ to $a_{p+q}$ must be an $E$ step.
\end{proof}

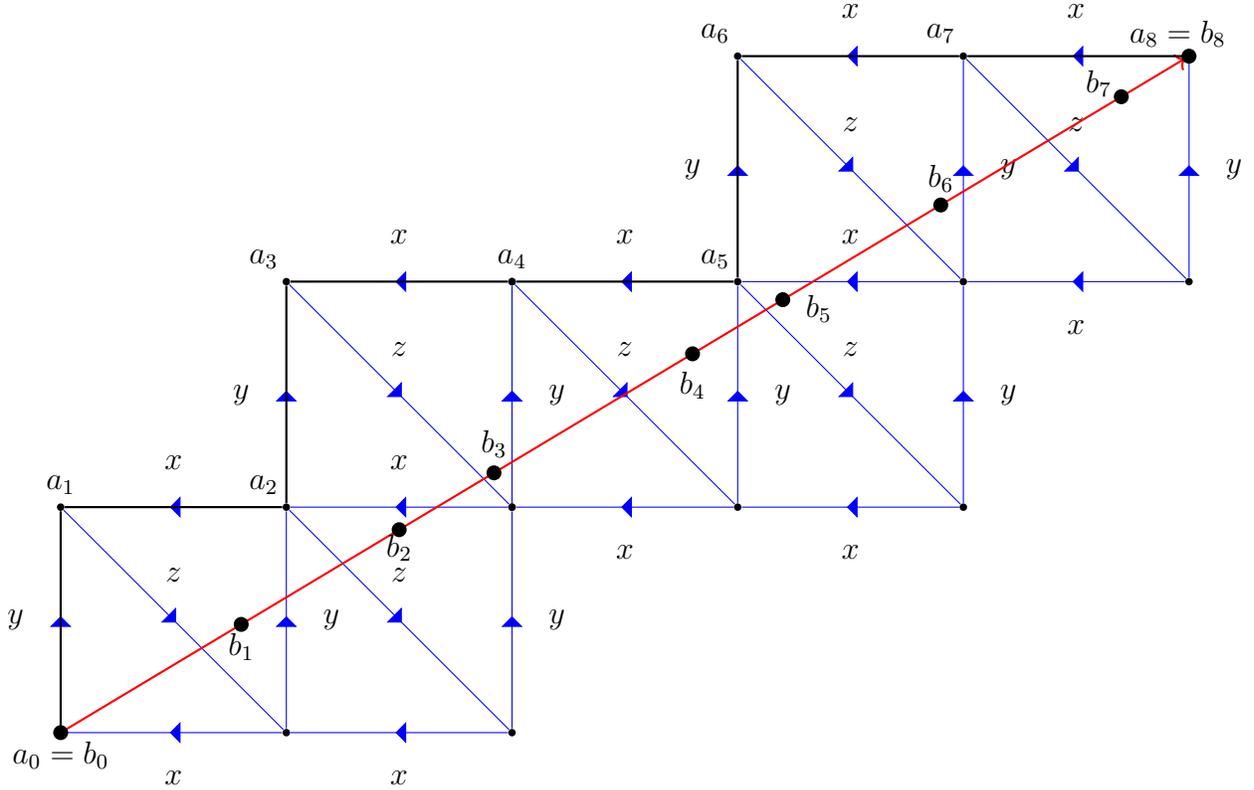
\begin{figure}
\begin{tikzpicture}[scale=0.6,every node/.style={sloped,allow upside down}]

    \node [circle,fill=black,inner sep=1pt] (0) at (-7, 3) {};
    \node [circle,fill=black,inner sep=1pt] (1) at (-7, -2) {};
    \node [circle,fill=black,inner sep=1pt] (2) at (-2, 3) {};
    \node [circle,fill=black,inner sep=1pt] (3) at (-2, -2) {};
    
    \node [] (6) at (-4.5, -3) {$x$};
    \node [] (7) at (-4.5, 4) {$x$};
    \node [] (8) at (-8, 0.5) {$y$};
    \node [] (9) at (-1, 0.5) {$y$};
     \node [] (10) at (-4.5, 1.5) {$z$};               

    \draw [style=blue] (3) to node {\midarrow} (1);
    \draw [style=blue] (2) to node {\midarrow} (0);
    \draw [style=blue] (0) to node {\midarrow} (3);
    \draw [style=blue] (1) to node {\midarrow}  (0);
    \draw [style=blue] (3) to node {\midarrow}  (2);

    \node [circle,fill=black,inner sep=1pt] (11) at (3, 3) {};
    \node [circle,fill=black,inner sep=1pt] (12) at (3, -2) {};
    
    \node [] (15) at (0.5, -3) {$x$};
    \node [] (16) at (0.5, 4) {$x$};
    \node [] (18) at (4, 0.5) {$y$};
     \node [] (19) at (0.5, 1.5) {$z$};               

    \draw [style=blue] (12) to node {\midarrow} (3);
    \draw [style=blue] (12) to node {\midarrow} (11);
    \draw [style=blue] (2) to node {\midarrow} (12);
    \draw [style=blue] (11) to node {\midarrow} (2);      
    
    \node [circle,fill=black,inner sep=1pt] (20) at (3, 8) {};
    \node [circle,fill=black,inner sep=1pt] (21) at (-2, 8) {};
    
    \node [] (25) at (0.5, 9) {$x$};
    \node [] (26) at (-3, 5.5) {$y$};
    \node [] (27) at (4, 5.5) {$y$};
     \node [] (28) at (0.5, 6.5) {$z$};               

    \draw [style=blue] (20) to node {\midarrow} (21);
    \draw [style=blue] (11) to node {\midarrow} (20);    
    \draw [style=blue] (2) to node {\midarrow} (21);    
    \draw [style=blue] (21) to node {\midarrow} (11);    
    
    \node [circle,fill=black,inner sep=1pt] (30) at (3, 8) {};
    \node [circle,fill=black,inner sep=1pt] (31) at (3, 3) {};
    \node [circle,fill=black,inner sep=1pt] (32) at (8, 8) {};
    \node [circle,fill=black,inner sep=1pt] (33) at (8, 3) {};
    
    \node [] (36) at (5.5, 2) {$x$};
    \node [] (37) at (5.5, 9) {$x$};
    \node [] (39) at (9, 5.5) {$y$};
     \node [] (40) at (5.5, 6.5) {$z$};               

    \draw [style=blue] (33) to node {\midarrow} (31);
    \draw [style=blue] (32) to node {\midarrow} (30);
    \draw [style=blue] (30) to node {\midarrow} (33);
    \draw [style=blue] (31) to node {\midarrow}  (30);
    \draw [style=blue] (33) to node {\midarrow}  (32);

    \node [circle,fill=black,inner sep=1pt] (41) at (13, 8) {};
    \node [circle,fill=black,inner sep=1pt] (42) at (13, 3) {};
    
    \node [] (45) at (10.5, 2) {$x$};
    \node [] (46) at (10.5, 9) {$x$};
    \node [] (48) at (14, 5.5) {$y$};
     \node [] (49) at (10.5, 6.5) {$z$};               

    \draw [style=blue] (42) to node {\midarrow} (33);
    \draw [style=blue] (42) to node {\midarrow} (41);
    \draw [style=blue] (32) to node {\midarrow} (42);
    \draw [style=blue] (41) to node {\midarrow} (32);      
    
    \node [circle,fill=black,inner sep=1pt] (50) at (13, 13) {};
    \node [circle,fill=black,inner sep=1pt] (51) at (8, 13) {};
    
    \node [] (55) at (10.5, 14) {$x$};
    \node [] (56) at (7, 10.5) {$y$};
    \node [] (57) at (14, 10.5) {$y$};
     \node [] (58) at (10.5, 11.5) {$z$};               

    \draw [style=blue] (50) to node {\midarrow} (51);
    \draw [style=blue] (41) to node {\midarrow} (50);    
    \draw [style=blue] (32) to node {\midarrow} (51);    
    \draw [style=blue] (51) to node {\midarrow} (41);              
          
    \node [circle,fill=black,inner sep=1pt] (60) at (18, 8) {};
    \node [circle,fill=black,inner sep=1pt] (61) at (18, 13) {};
    
    \node [] (64) at (15.5, 7) {$x$};
    \node [] (65) at (15.5, 14) {$x$};
    \node [] (67) at (19, 10.5) {$y$};
     \node [] (68) at (15.5, 11.5) {$z$};               

    \draw [style=blue] (60) to node {\midarrow} (61);
    \draw [style=blue] (50) to node {\midarrow} (60);    
    \draw [style=blue] (61) to node {\midarrow} (50);  
    \draw [style=blue] (60) to node {\midarrow} (41);        
          
    \draw [style=red,thick] (1) to (61);    
    
    \draw [color=black, thick] (1) to (0);
    \draw [color=black, thick] (0) to (2);
    \draw [color=black, thick] (2) to (21);    
    \draw [color=black, thick] (21) to (20);
    \draw [color=black, thick] (20) to (32);            
    \draw [color=black, thick] (32) to (51);            
    \draw [color=black, thick] (51) to (50);            
    \draw [color=black, thick] (50) to (61);                      
    
    \node  [circle,fill=black,inner sep=2pt] (100) at (-7, -2) {};
    \node  [] (200) at (-7, -2.5) {$a_0=b_0$};

    \node  [circle,fill=black,inner sep=2pt] (101) at (-3, 0.4) {};
    \node  [] (201) at (-3, -0.1) {$b_1$};
    \node  [] (301) at (-7, 3.5) {$a_1$};

    \node  [circle,fill=black,inner sep=2pt] (102) at (0.5, 2.5) {};
    \node  [] (202) at (0.5, 2.1) {$b_2$};
    \node  [] (302) at (-2.5, 3.5) {$a_2$};

    \node  [circle,fill=black,inner sep=2pt] (103) at (2.6, 3.76) {};
    \node  [] (203) at (2.6, 4.4) {$b_3$};
    \node  [] (303) at (-2.5, 8.5) {$a_3$};

    \node  [circle,fill=black,inner sep=2pt] (104) at (7, 6.4) {};
    \node  [] (204) at (7, 5.7) {$b_4$};
    \node  [] (304) at (3, 8.5) {$a_4$};

    \node  [circle,fill=black,inner sep=2pt] (105) at (9, 7.6) {};
    \node  [] (205) at (9.8, 7.4) {$b_5$};
    \node  [] (305) at (7.5, 8.5) {$a_5$};

    \node  [circle,fill=black,inner sep=2pt] (106) at (12.5, 9.7) {};
    \node  [] (206) at (12.5, 10.3) {$b_6$};
    \node  [] (305) at (7.5, 13.5) {$a_6$};

    \node  [circle,fill=black,inner sep=2pt] (107) at (16.5, 12.1) {};
    \node  [] (207) at (16, 12.4) {$b_7$};
    \node  [] (307) at (12.5, 13.5) {$a_7$};

    \node  [circle,fill=black,inner sep=2pt] (108) at (18, 13) {};
    \node  [] (308) at (17.75, 13.5) {$a_8=b_8$};
    
\end{tikzpicture}
\caption{Points $b_0, b_1, b_2, \dots, b_8$ on arc $\gamma_{3/5}$.}
\label{fig:snake_word_proof}
\end{figure}

As a consequence of \Cref{lem:snake_word}, we let $T_\gamma$ denote the polygon (cut out of the universal cover of the triangulated once-punctured torus) inscribing the sequence of arcs crossed by $\gamma=\gamma_{p/q}$.  This polygon, $T_\gamma$, can equivalently be defined as the boundary between the upper lattice path and the lower lattice path.  The cyclic orientation of the two triangles making up the fundamental domain of the triangulated once-punctured torus, as in \Cref{fig:super_torus} (Left) induces an orientation of $T_\gamma$ so that every arc labeled $x$ is oriented to the left, every arc labeled $y$ is oriented upwards, and every arc labeled $z$ is oriented southeastern.  To transform this orientation into the default orientation on the interior of $T_\gamma$, i.e. not including boundary edges, we negate $\mu$-invariants as described in the next result.

\begin{lemma}
\label{lem:negations}
If the upper Christoffel word for the line $\gamma$ of slope $p/q$ has $1$'s in positions $i_1=1, i_2,\dots, i_p$, then on the interior edges of $T_\gamma$, the default orientation differs from the lift of the cyclic orientation by negating $\sigma_{i_2}, \sigma_{i_3},\dots, \sigma_{i_p}$.
\end{lemma}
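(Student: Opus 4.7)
The plan is to compare the default and cyclic orientations on each interior edge of $T_\gamma$, using the fact that each application of the equivalence relation of Figure~\ref{fig:super_ptolemy} (Right) to $\sigma_{i_k}$ reverses the orientations of exactly its three incident edges. Since each interior edge of $T_\gamma$ is adjacent to exactly one $\sigma$-triangle, the cumulative effect of the negations $\sigma_{i_2}, \ldots, \sigma_{i_p}$ is to flip precisely the interior edges belonging to these triangles. For $k \geq 2$, the triangle $\sigma_{i_k}$ has exactly two interior edges in $T_\gamma$: the diagonal of domain $i_k$, and the horizontal $x$-arc by which $\gamma$ entered domain $i_k$; its left $y$-edge lies on the upper lattice path (since step $i_k$ is $N$) and hence is a boundary edge of $T_\gamma$.

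To identify the default orientation on the interior edges, I will use a canonical indexing $c_0, c_1, \ldots, c_{N+1}$ of the vertices of $T_\gamma$ by order of first appearance: start with $c_0 = a$ (the initial vertex of $\gamma$), follow $\gamma$ through the triangles $\sigma_1, \theta_1, \sigma_2, \theta_2, \ldots, \theta_{p+q-1}$, and list each vertex the first time it appears. The initial triangle $\sigma_1$ contributes its two other vertices in clockwise order (matching its cyclic orientation), and each subsequent triangle contributes the single new vertex not shared with the previously visited triangle. Every vertex of $T_\gamma$ lies on its boundary and every interior edge has both of its endpoints among the $c_i$, so the default orientation on each interior edge is given by the rule ``from the lower-indexed endpoint to the higher-indexed one''.

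The case analysis then yields the main comparison. For the diagonal of domain $k$: if step $k$ is $N$ with $k \geq 2$, the new vertex contributed by $\sigma_k$ is the upper-left corner of the domain, so the default orientation points from the (already-seen) lower-right corner to the (newly-seen) upper-left corner, reversing the cyclic southeast orientation; if step $k$ is $E$ or $k = 1$, the new vertex is the lower-right corner instead, and the default agrees with cyclic. For an $x$-arc entering a step-$N$ domain $k \geq 2$, both endpoints are vertices of $\theta_{k-1}$ whose new vertex is the right endpoint, so the default points rightward, opposite to the cyclic leftward orientation. For a $y$-arc entering a step-$E$ domain $k$, the new vertex of $\theta_{k-1}$ is the upper endpoint, so default and cyclic both point upward. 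Hence the interior edges on which the default and cyclic orientations disagree are exactly the diagonals and $x$-arcs associated with the step-$N$ domains $i_2, \ldots, i_p$, matching the effect of the negations and yielding the lemma.

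The main obstacle I anticipate is rigorously justifying that the first-appearance indexing above coincides with the canonical fan-center ordering that defines the default orientation on $T_\gamma$, especially at the reflex vertices of $T_\gamma$ (the $EN$-corners of the upper lattice path and the $NE$-corners of the lower, where $T_\gamma$ has interior angle $3\pi/2$). At each such vertex, a large fan is centered, and one must check that the first-appearance indexing correctly enumerates the triangles of that fan in the order $\gamma$ visits them; once this combinatorial identification is established, the three-case analysis above completes the proof.
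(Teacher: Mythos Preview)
Your edge-by-edge case analysis is sound and the conclusion is correct; the route, however, is genuinely different from the paper's. Rather than introducing a first-appearance vertex indexing, the paper works directly with the fan-segment decomposition of $T_\gamma$: it observes that every fan segment is either a single triangle or a two-triangle quadrilateral, the quadrilaterals occurring exactly at the two-letter subwords $01$ of the upper Christoffel word, and then checks that the only interior edges on which the default and cyclic orientations disagree are the $x$- and $z$-edges bordering $\sigma_i$ for each non-initial $1$ in position $i$. Your approach trades this structural identification for a concrete ``lower-to-higher'' rule and a three-case check, which is more elementary once the rule is granted.

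The gap you flag is real, and closing it essentially forces you back onto the paper's fan analysis. Note first that your first-appearance indexing does not literally coincide with the fan-center labeling: the latter omits the non-fan-center vertices (the convex $\pi/2$-corners sitting opposite each reflex vertex inside a quadrilateral segment), so what you actually need is the weaker statement that on every interior edge the default orientation points from the endpoint of smaller first-appearance index to the larger. The quickest justification uses exactly the structure the paper isolates: (i) each fan segment in $T_\gamma$ has at most two triangles; (ii) the fan center $c_{i+1}$ is the new vertex of the \emph{last} triangle of segment $i$, so $c_1 < c_2 < \cdots$ in first-appearance order; and (iii) in a two-triangle segment the non-center endpoint of the internal diagonal is the new vertex of the \emph{first} triangle of that segment, hence appears just after its center. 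One small correction to your description of the obstacle: the reflex vertices you single out are not apices of ``large'' fans but precisely the centers of the two-triangle segments; although up to four triangles of $T_\gamma$ meet at such a vertex, the canonical fan decomposition (whose centers alternate sides of $\gamma$) assigns only two of them to the segment centered there.
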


\begin{proof}
Since the polygon $T_\gamma$ is triangulated based on the fundamental domain as in \Cref{fig:super_torus} (Left), we get a configuration where
each fan segment is either a single triangle or a quadrilateral, depending on the upper Christoffel word.  In particular, every entry of the upper Christoffel word, besides the final entry, corresponds to either two triangular fan segments or a single quadrilateral fan segment.  If an entry is the prefix or suffix of an instance of the two-letter subword $01$, it corresponds to a quadrilateral fan segment.  All other entries correspond to two triangular fan segments.

Of these fan segments, only the quadrilateral fan segments corresponding to entry $1$ (in position $i$) lead to edge orientations in the default orientation that differ from the cyclic orientation, and the difference corresponds to the orientation on the interior edges $x$ and $z$, which borders $\sigma_i$ exactly as stated. 
\end{proof}

The last piece of theory we need before proving our main theorem is the positive ordering determined by the default orientation on $T_\gamma$.

If we label the $\mu$-invariants as $\theta_1, \theta_2, \dots, \theta_n$ in order according to the crossings by arc $\gamma$, then starting from the end we apply the following procedure.  Suppose that $\theta_{k+1}, \theta_{k+2}, \dots, \theta_n$ are already ordered.  Then if the edge separating $\theta_k$ and $\theta_{k+1}$ is oriented so that $\theta_k$ is to the right, then we declare $\theta_k > \theta_i$ for all $k+1 \leq i \leq n$.  On the other hand, if $\theta_k$ is to the left of $\theta_{k+1}$, then we declare $\theta_k < \theta_i$ for all $k+1 \leq i \leq n$.  Applying this first to the case $k=n-1$ and following by descent until $k=1$ yields an ordering on all $n$ $\mu$-invariants.

\begin{lemma}
\label{lem:pos_ordering}
Given the same hypotheses as in \Cref{lem:negations}, the positive ordering associated to the associated cyclic orientation (after negating $\sigma_i$'s for $i$'s indexing positions of non-initial $1$'s in the upper Christoffel word) is given by the following two step process.

Step 1: Initialize the ordering as $\sigma_1 > \sigma_2 > \cdots \sigma_n > \theta_n > \theta_{n-1} > \dots > \theta_1.$

Step 2: For every instance of $01$ in the upper Christoffel word, where $i$ denotes the position of this non-initial $1$, switch the placements of $\sigma_{i}$ with $\theta_{i-1}$ and negate $\sigma_i$.
\end{lemma}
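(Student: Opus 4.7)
The plan is to apply \Cref{def:pos_ordering} directly to the polygon $T_\gamma$ equipped with the default orientation prescribed by \Cref{lem:negations}, running the backwards inductive construction from the last crossed $\mu$-invariant back to the first.

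First I would enumerate the $\mu$-invariants along $\gamma$ in the order of crossing. Using the snake word from \Cref{lem:snake_word}, the arc $\gamma$ enters the $\sigma_i$-triangle (lower-left) of the $i$-th lifted fundamental domain, crosses the $z$-diagonal into the $\theta_i$-triangle, and then exits through either the lifted $x$-arc or $y$-arc according to whether the next step of the upper Christoffel word is $N$ or $E$. This yields the crossing sequence $\sigma_1, \theta_1, \sigma_2, \theta_2, \ldots, \sigma_n, \theta_n$ (with $n = p+q-1$), together with a list of separating interior arcs of $T_\gamma$ whose type ($x$, $y$, or $z$) is determined locally by the Christoffel word.

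Second, I would read the default orientation of each separating edge directly off \Cref{lem:negations}: outside the $\sigma_{i_k}$-triangles (with $i_k$ indexing a non-initial $1$) all edges retain the cyclic orientation, which sends $x$-arcs leftward, $y$-arcs upward, and $z$-arcs to the southeast. Translating each oriented edge into the ``$\theta_k$ to the left/right of $\theta_{k+1}$'' comparison of \Cref{def:pos_ordering} and running the backwards iteration in the baseline case (no non-initial $1$'s, i.e.\ $p=1$ and the word consists of a single $1$ followed by $q$ zeros), I expect a direct repetitive computation to recover exactly the Step~1 ordering $\sigma_1 > \sigma_2 > \cdots > \sigma_n > \theta_n > \theta_{n-1} > \cdots > \theta_1$.

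Finally, I would induct on the number of non-initial $1$'s. For each non-initial $1$ at position $i$, the orientation reversal of \Cref{lem:negations} affects the two edges bordering the $\sigma_i$-triangle, exactly one of which is the interior separating edge of $T_\gamma$ lying between $\theta_{i-1}$ and $\sigma_i$ in the crossing sequence; a short case check using \Cref{def:pos_ordering} should show that this single flip swaps the placements of $\sigma_i$ and $\theta_{i-1}$ in the ordering, while the sign change on $\sigma_i$ is inherited from \Cref{lem:negations}. The main obstacle I anticipate is showing that successive non-initial $1$'s produce independent swaps rather than cascading through one another; since the Christoffel word for $0 < p/q \leq 1$ has no $11$ subword (except in the degenerate case $p=q=1$), consecutive non-initial $1$'s are separated by at least one $0$, and I expect this gap to guarantee that each swap operates on disjoint positions of the evolving ordering. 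Verifying non-interference carefully, and confirming that the swap description correctly captures how the backward induction moves $\sigma_i$ past $\theta_{i-1}$ without disturbing the relative order of the other $\mu$-invariants, is where I expect the bulk of the technical work to lie.
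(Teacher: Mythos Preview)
Your overall strategy matches the paper's: run \Cref{def:pos_ordering} backward along the crossing sequence, establish the cyclic baseline giving Step~1, and then analyze the perturbation at each non-initial $1$. The paper does exactly this, building ordered blocks $B_k$ as it walks back.

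There is, however, a genuine gap in your local analysis at a non-initial $1$ in position $i$. You write that the negation of $\sigma_i$ ``affects the two edges bordering the $\sigma_i$-triangle, exactly one of which is the interior separating edge of $T_\gamma$ lying between $\theta_{i-1}$ and $\sigma_i$,'' and that ``this single flip'' produces the swap. In fact \emph{both} of those two edges are interior separating edges of $T_\gamma$: the $x$-edge separates $\theta_{i-1}$ from $\sigma_i$, and the $z$-edge separates $\sigma_i$ from $\theta_i$. Both reversals are essential. In the backward induction, the $z$-edge reversal is what makes $(-\sigma_i)$ insert as \emph{minimal} relative to everything already placed (instead of maximal), while the $x$-edge reversal is what makes $\theta_{i-1}$ insert as \emph{maximal} (instead of minimal). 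Only the combination turns $\sigma_i > B_i > \theta_{i-1}$ into $\theta_{i-1} > B_i > (-\sigma_i)$, which is the claimed swap. If you track only the $x$-edge flip, $\sigma_i$ would still land at the top and you would not get a swap but rather $\sigma_i$ and $\theta_{i-1}$ both sitting above $B_i$.

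On non-interference: your concern is well-placed but the fix is simpler than an induction on the number of non-initial $1$'s. The paper just runs the backward procedure once, noting that between consecutive occurrences of $01$ the insertions proceed exactly as in the cyclic case; the two edge reversals at each $01$ are local to the pair $(\theta_{i-1},\sigma_i)$ in the crossing sequence, so the absence of a $11$ subword (which you correctly identified) guarantees the affected insertion steps for distinct $01$'s are disjoint. Running the procedure continuously rather than re-doing it after each $01$ avoids having to argue that earlier swaps remain undisturbed by later ones.
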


\begin{proof}
Following \Cref{def:pos_ordering}, the cyclic orientation of $T_\gamma$ induces the positive ordering
$$ \sigma_1 > \sigma_2 > \cdots \sigma_n > \theta_n > \theta_{n-1} > \dots > \theta_1,$$ which leads to well-defined formulas when we project down to the original torus with only two $\mu$-invariants (by letting $\sigma_1 = \sigma_2 = \cdots = \sigma_n = \sigma$ and $\theta_1 = \theta_2 = \cdots = \theta_n = \theta$) since $\sigma_j > \theta_k$ for all $j$ and $k$.

However, after negating the values of $\sigma_i$'s based on the positions of non-initial $1$'s in the upper Christoffel word, the positive ordering associated to the resulting default orientation may have the sequences of $\pm \sigma_j$'s and $\theta_k$'s interweaved. 

In particular, starting from the last quadrilateral, we begin determining our ordering by looking at the edge between the $\mu$-invariants $\theta_{p+q-1}$ and $\pm \sigma_{p+q-1}$ and using \Cref{def:pos_ordering} to determine if $\theta_{p+q-1} > \pm \sigma_{p+q-1}$ or  $\theta_{p+q-1} < \pm \sigma_{p+q-1}$.  We then move along $\gamma$ backwards, assuming we have previously constructed a positive ordering to the set of $\mu$-invariants 
$\bigg\{\pm \sigma_{k+1}, \pm \sigma_{k+2}, \dots, \pm \sigma_{p+q-1}, \theta_{p+q-1}, \theta_{p+q-2}, \dots, \theta_{k+2}, \theta_{k+1}, \theta_k\bigg\}$.  We will denote this ordered block as $B_k$.  We observe that if no $1$ appears to right of position $k$, then the ordering of $B_k$ would be $
\sigma_{k+1} > \sigma_{k+2} > \cdots > \sigma_n > \theta_n > \theta_{n-1} > \dots > \theta_{k}$, which matches the ordering expected from the cyclic orientation.  

The first occurrence of an edge which is oriented differently is the edge $z$ between $\sigma_i$ and $\theta_i$ such that the $i$th position of the upper Christoffel word is $1$.  Consequently, instead of $\sigma_i$ being greater than every $\mu$-invariant coming after it in the triangulation (which is the case in the cyclic triangulation), we instead get that $(-\sigma_i)$ is less than every $\mu$-invariant coming afterwards.

We next see the edge $x$ between $\theta_{i-1}$ and $\sigma_i$ is oriented oppositely.  Thus instead of $\theta_{i-1}$ being minimal among all $\mu$-invariants coming afterwards, we get that $\theta_{i-1}$ is maximal.  Thus if we originally had 
$$\sigma_i > B_i > \theta_{i-1}$$ and we have replaced this with $$\theta_{i-1} > B_i > (-\sigma_i).$$
We continue inserting $\theta_j$ and $\sigma_k$'s in order as in the cyclic triangulation case until the next time we see an occurrence of $01$ earlier in the upper Christoffel word.

Iterating this process, we get the positive ordering defined by the two step process as desired.
\end{proof}

\begin{example}
For arc $\gamma = \gamma_{3/5}$, the default orientation of $T_\gamma$ is determined by the fan centers $c_0,c_1,\dots, c_{11}$ as in \Cref{fig:default-3-5}.  The upper Christoffel word for this example is $10100100$, which has non-initial $1$'s in positions $3$ and $6$. (The quadrilateral fan segments correspond to the subwords $01$ in the $2$nd and $3$rd entries as well as the $5$th and $6$th entries.)  Negating $\sigma_3$ and $\sigma_6$ in the cyclic orientation of \Cref{fig:torus-3-5} results in this default orientation by reversing the direction of the red arrows incident to $c_4$ and $c_8$, respectively.  The positive ordering associated to this default orientation is 
$$\sigma_1 > \sigma_2 > \theta_2 > \sigma_4 > \sigma_5 > \theta_5 > \sigma_7 > \theta_7 > \theta_6 > (-\sigma_6) > \theta_4 > \theta_3 > (-\sigma_3) > \theta_1.$$ 
\end{example}

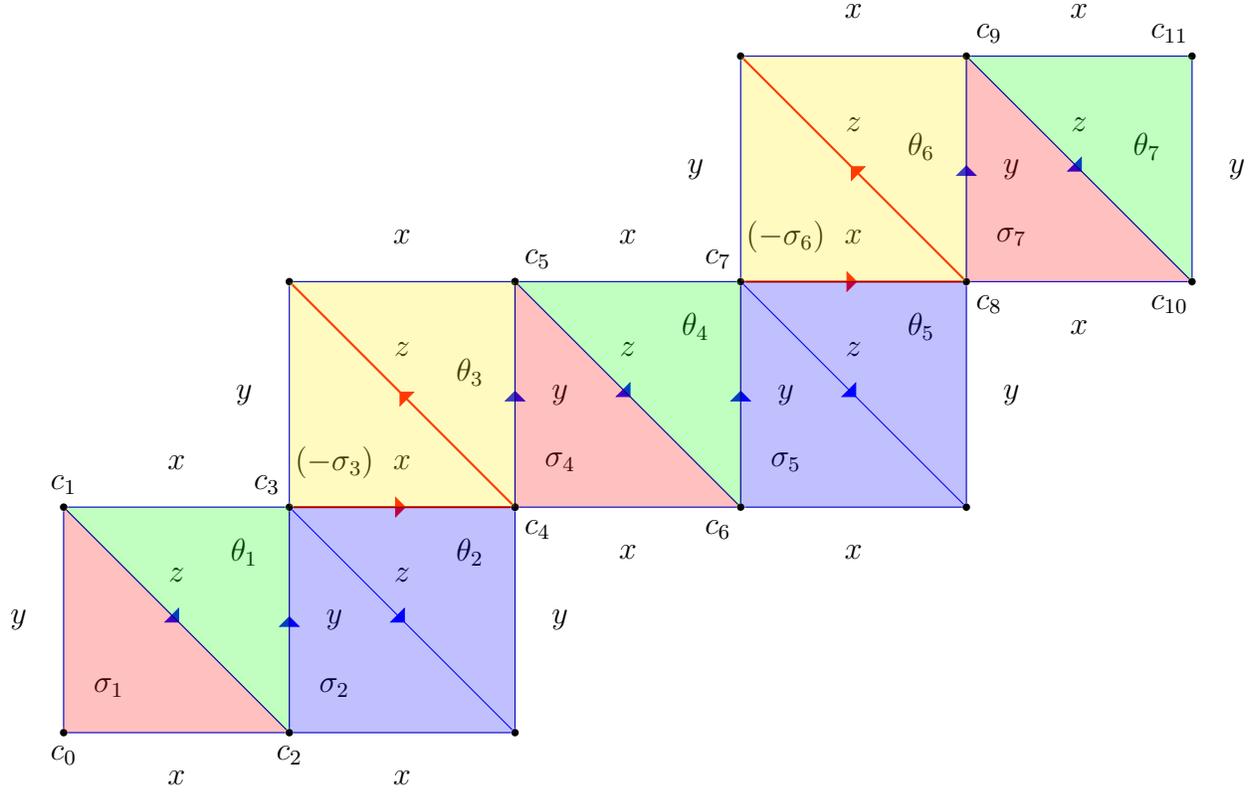
\begin{figure}
\begin{tikzpicture}[scale=0.6,every node/.style={sloped,allow upside down}]

    \node [circle,fill=black,inner sep=1pt] (0) at (-7, 3) {};
    \node [circle,fill=black,inner sep=1pt] (1) at (-7, -2) {};
    \node [circle,fill=black,inner sep=1pt] (2) at (-2, 3) {};
    \node [circle,fill=black,inner sep=1pt] (3) at (-2, -2) {};
    \node [] (4) at (-6, -1) {$\sigma_1$};
    \node [] (5) at (-3, 2) {$\theta_1$};
    
    \node [] (6) at (-4.5, -3) {$x$};
    \node [] (7) at (-4.5, 4) {$x$};
    \node [] (8) at (-8, 0.5) {$y$};
    \node [] (9) at (-1, 0.5) {$y$};
     \node [] (10) at (-4.5, 1.5) {$z$};               

    \draw [style=blue] (3) to (1);
    \draw [style=blue] (2) to (0);
    \draw [style=blue] (0) to node {\midarrow} (3);
    \draw [style=blue] (1) to  (0);
    \draw [style=blue] (3) to node {\midarrow}  (2);

    \node [circle,fill=black,inner sep=1pt] (11) at (3, 3) {};
    \node [circle,fill=black,inner sep=1pt] (12) at (3, -2) {};
    \node [] (13) at (-1, -1) {$\sigma_2$};
    \node [] (14) at (2, 2) {$\theta_2$};
    
    \node [] (15) at (0.5, -3) {$x$};
    \node [] (16) at (0.5, 4) {$x$};
    \node [] (18) at (4, 0.5) {$y$};
     \node [] (19) at (0.5, 1.5) {$z$};               

    \draw [style=blue] (12) to  (3);
    \draw [style=blue] (12) to (11);
    \draw [style=blue] (2) to node {\midarrow} (12);
    \draw [color=red,thick] (2) to node {\midarrow} (11);      
    
    \node [circle,fill=black,inner sep=1pt] (20) at (3, 8) {};
    \node [circle,fill=black,inner sep=1pt] (21) at (-2, 8) {};
    \node [] (22) at (-1, 4) {$(-\sigma_3)$};
    \node [] (23) at (2, 6) {$\theta_3$};
    
    \node [] (25) at (0.5, 9) {$x$};
    \node [] (26) at (-3, 5.5) {$y$};
    \node [] (27) at (4, 5.5) {$y$};
     \node [] (28) at (0.5, 6.5) {$z$};               

    \draw [style=blue] (20) to (21);
    \draw [style=blue] (11) to  (20);    
    \draw [style=blue] (2) to  (21);    
    \draw [color=red,thick] (11) to node {\midarrow} (21);    
    
    \node [circle,fill=black,inner sep=1pt] (30) at (3, 8) {};
    \node [circle,fill=black,inner sep=1pt] (31) at (3, 3) {};
    \node [circle,fill=black,inner sep=1pt] (32) at (8, 8) {};
    \node [circle,fill=black,inner sep=1pt] (33) at (8, 3) {};
    \node [] (34) at (4, 4) {$\sigma_4$};
    \node [] (35) at (7, 7) {$\theta_4$};
    
    \node [] (36) at (5.5, 2) {$x$};
    \node [] (37) at (5.5, 9) {$x$};
    \node [] (39) at (9, 5.5) {$y$};
     \node [] (40) at (5.5, 6.5) {$z$};               

    \draw [style=blue] (33) to  (31);
    \draw [style=blue] (32) to  (30);
    \draw [style=blue] (30) to node {\midarrow} (33);
    \draw [style=blue] (31) to node {\midarrow}  (30);
    \draw [style=blue] (33) to node {\midarrow}  (32);

    \node [circle,fill=black,inner sep=1pt] (41) at (13, 8) {};
    \node [circle,fill=black,inner sep=1pt] (42) at (13, 3) {};
    \node [] (43) at (9, 4) {$\sigma_5$};
    \node [] (44) at (12, 7) {$\theta_5$};
    
    \node [] (45) at (10.5, 2) {$x$};
    \node [] (46) at (10.5, 9) {$x$};
    \node [] (48) at (14, 5.5) {$y$};
     \node [] (49) at (10.5, 6.5) {$z$};               

    \draw [style=blue] (42) to  (33);
    \draw [style=blue] (42) to (41);
    \draw [style=blue] (32) to node {\midarrow} (42);
    \draw [color=red,thick] (32) to node {\midarrow} (41);      
    
    \node [circle,fill=black,inner sep=1pt] (50) at (13, 13) {};
    \node [circle,fill=black,inner sep=1pt] (51) at (8, 13) {};
    \node [] (52) at (9, 9) {$(-\sigma_6)$};
    \node [] (53) at (12, 11) {$\theta_6$};
    
    \node [] (55) at (10.5, 14) {$x$};
    \node [] (56) at (7, 10.5) {$y$};
    \node [] (57) at (14, 10.5) {$y$};
     \node [] (58) at (10.5, 11.5) {$z$};               

    \draw [style=blue] (50) to  (51);
    \draw [style=blue] (41) to  node {\midarrow}  (50);    
    \draw [style=blue] (32) to   (51);    
    \draw [color=red,thick] (41) to node {\midarrow} (51);              
          
    \node [circle,fill=black,inner sep=1pt] (60) at (18, 8) {};
    \node [circle,fill=black,inner sep=1pt] (61) at (18, 13) {};
    \node [] (62) at (14, 9) {$\sigma_7$};
    \node [] (63) at (17, 11) {$\theta_7$};
    
    \node [] (64) at (15.5, 7) {$x$};
    \node [] (65) at (15.5, 14) {$x$};
    \node [] (67) at (19, 10.5) {$y$};
     \node [] (68) at (15.5, 11.5) {$z$};               

    \draw [style=blue] (60) to  (61);
    \draw [style=blue] (50) to node {\midarrow} (60);    
    \draw [style=blue] (61) to  (50);  
    \draw [style=blue] (60) to (41);        
          
     \node [] at (-7, -2.5) {$c_0$};
     \node [] at (-7, 3.5) {$c_1$};
     \node [] at (-2, -2.5) {$c_2$};
     \node [] at (-2.5, 3.5) {$c_3$};
     \node [] at (3.5, 2.5) {$c_4$};
     \node [] at (3.5, 8.5) {$c_5$};
     \node [] at (7.5, 2.5) {$c_6$};
     \node [] at (7.5, 8.5) {$c_7$};
     \node [] at (13.5, 7.5) {$c_8$};
     \node [] at (13.5, 13.5) {$c_9$};
     \node [] at (17.5, 7.5) {$c_{10}$};
     \node [] at (17.5, 13.5) {$c_{11}$};
                    
     \draw[fill=red, nearly transparent] (-7,-2) -- (-2,-2) -- (-7,3) -- cycle;
     \draw[fill=green, nearly transparent] (-7,3) -- (-2,-2) -- (-2,3) -- cycle;
     \draw[fill=blue, nearly transparent] (-2,3) -- (-2,-2) -- (3,-2) -- (3,3) -- cycle;
     \draw[fill=yellow, nearly transparent] (3,3) -- (-2,3) -- (-2,8) -- (3,8) -- cycle;

     \draw[fill=red, nearly transparent] (3,3) -- (8,3) -- (3,8) -- cycle;
     \draw[fill=green, nearly transparent] (3,8) -- (8,3) -- (8,8) -- cycle;
     \draw[fill=blue, nearly transparent] (8,8) -- (8,3) -- (13,3) -- (13,8) -- cycle;
     \draw[fill=yellow, nearly transparent] (13,8) -- (8,8) -- (8,13) -- (13,13) -- cycle;

     \draw[fill=red, nearly transparent] (13,8) -- (18,8) -- (13,13) -- cycle;
     \draw[fill=green, nearly transparent] (13,13) -- (18,8) -- (18,13) -- cycle;
                     
\end{tikzpicture}
\caption{Default Orientation for the polygon $T_\gamma$ cut out by the arc $\gamma$ of slope $3/5$; with labels of fan centers $c_0, c_1, \dots , c_{11}$ and shading of fan segments.}
\label{fig:default-3-5}
\end{figure}

As a consequence of \Cref{lem:pos_ordering}, we get the following result.

\begin{corollary}
\label{cor:mult_pairs}
Given polygon $T_\gamma$, and using negations to get from the cyclic orientation to the default orientation and then the positive ordering, we recover the following multiplications involving pairs of $\mu$-invariants.

Firstly, $\theta_j \theta_k = \sigma_j \sigma_k = 0$ for any indices $j$ and $k$ since after we identify all $\theta_j$'s to each other and all $\sigma_k$'s to each other, the resulting multiplicative expressions square to zero.

Secondly, define $\Sigma_x$ (resp. $\Theta_x$) to be the set of indices $i$ (resp. $(i-1)$) such that $i$ is the position of a non-initial occurrence of $1$ in the upper Christoffel word.  Let $\Sigma_y$ and $\Theta_y$ denote the complements $( \Sigma_x)^c = \{1,2,\dots, p+q-1\} \setminus \Sigma_x$ and $( \Theta_x)^c = \{1,2,\dots, p+q-1\} \setminus \Theta_x$, respectively.

Then for $1 \leq j , k \leq p+q-1$, the product $\sigma_j \theta_k$ is positive in this ordering if and only if one of the following hold:

1) $j \in \Sigma_y$ and $k \in \Theta_y$.

2) $j \in \Sigma_y$, $k \in \Theta_x$ and $j \leq k$.

3) $j \in \Sigma_x$ and $k \in \Theta_x$.

4) $j \in \Sigma_x$, $k \in \Theta_y$ and $j \leq k$.

Otherwise, this product contributes a negative sign.  In particular, we have negative signs for $\sigma_j \theta_k$ for 
 $j \in \Sigma_y, k \in \Theta_x$ and $j > k$ or $j \in \Sigma_x, k \in \Theta_y$ and $j > k$. 
\end{corollary}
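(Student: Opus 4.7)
The plan is to derive both assertions directly from Lemma~\ref{lem:pos_ordering}, which gives an explicit description of the positive ordering after the swaps of $\sigma_i$ with $\theta_{i-1}$ (and the attached negations) indexed by non-initial $1$'s of the upper Christoffel word. The first claim $\theta_j\theta_k = \sigma_j\sigma_k = 0$ requires no case analysis: once we descend to the torus by identifying all $\sigma_i$ with a single $\sigma$ and all $\theta_j$ with a single $\theta$, each such product contains $\sigma^2$ or $\theta^2$ and hence vanishes because the $\mu$-invariants are odd.

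For the signs of $\sigma_j\theta_k$, I would tabulate the exact position in the ordering of each of the four kinds of generators that Lemma~\ref{lem:pos_ordering} produces, and then reduce each of the four positive cases (and verify each of the two negative cases) to a comparison of those positions. Writing $n=p+q-1$ and viewing the ordering as a sequence of $2n$ slots (smaller slot index = greater), the swaps leave $\sigma_j$ in slot $j$ if $j\in\Sigma_y$ and move it to slot $2n-j+2$ (as $-\sigma_j$) if $j\in\Sigma_x$; similarly, $\theta_k$ stays in slot $2n-k+1$ if $k\in\Theta_y$ and moves to slot $k+1$ if $k\in\Theta_x$. With this table, each case becomes a one-line numerical inequality: Case~1 is automatic because the two generators sit in different halves of the ordering; Case~2 reduces to $j<k+1$ inside the $\sigma$-half; Case~3 is automatic because $\theta_k$ now sits in the $\sigma$-half while $-\sigma_j$ sits in the $\theta$-half; and Case~4 reduces to $2n-k+1<2n-j+2$ inside the $\theta$-half, i.e.\ $j\leq k$. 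The two negative cases correspond to the opposite strict inequality, and the borderline $j=k+1$ is impossible because $\Sigma_y$ and $\Sigma_x$ (respectively $\Theta_y$ and $\Theta_x$) are disjoint.

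The main pitfall, and essentially the only nontrivial bookkeeping, is reconciling the explicit negation $\sigma_i\mapsto -\sigma_i$ imposed for $i\in\Sigma_x$ with the anticommutation $\sigma_j\theta_k = -\theta_k\sigma_j$: these two sources of signs can conspire to cancel or to reinforce. For example, in Cases~3 and~4 the canonical (decreasing-order) product is $\theta_k\cdot(-\sigma_j)$, and the two minus signs cancel to give $\sigma_j\theta_k$, which is therefore positive. By contrast, in the negative cases only one of the two negations is present, so exactly one minus sign survives and $\sigma_j\theta_k$ is negative. To execute this safely I would, in every case, first write down the positively ordered monomial and then express $\sigma_j\theta_k$ relative to it in a single rewrite, so both kinds of sign contributions are tracked together rather than piecewise. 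Beyond this, the verification is routine and mechanical.
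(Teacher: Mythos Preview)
Your proposal is correct and follows essentially the same approach as the paper: both derive the result directly from Lemma~\ref{lem:pos_ordering} by tracking where each $\sigma_j$ and $\theta_k$ lands after the prescribed swaps and negations, and then reading off the sign of $\sigma_j\theta_k$ in each case. The paper phrases the resulting ordering qualitatively as a shuffle $(\Sigma_y \,\Sha\, \Theta_x) > (\Theta_y \,\Sha\, -\Sigma_x)$, whereas you make this quantitative by assigning explicit slot indices; your version is a bit more careful on the sign bookkeeping (separating the anticommutation sign from the explicit $-\sigma_j$ negation), but the underlying argument is the same.
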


\begin{proof}
Using Lemma \ref{lem:pos_ordering} and letting $~\Sha~$ denote a shuffle or interspersing of two subsets, the positive ordering  can be summarized as $(\Sigma_y ~\Sha~ \Theta_x) >  (\Theta_y ~\Sha~ -\Sigma_x)$.

In more detail, the first portion of the positive ordering associated to the default orientation is the union of $\Sigma_y$ and $\Theta_x$, with each subset written with respect to subscripts in increasing order and so that the elements of $\Theta_x$ fit into the gaps left due to the elements of $\Sigma_x = (\Sigma_y)^c$.  Note in particular that there is a bijection between $\Sigma_x$ and $\Theta_x$ by $i \to (i-1)$.  The second portion of the positive ordering is then the union of $\Theta_y$ and $-\Sigma_x$ with respect to subscripts in decreasing order, analogously interspersed, and with negative signs multiplying elements of $\Sigma_x$.

Consequently, any element of $\Sigma_y$ precedes any element of $\Theta_y$, and any element of $\Theta_x$ precedes an element of $-\Sigma_x$.  However, the latter comes with a negative sign and so we reverse the order to account for this.  We thus have properties (1) and (3) as above.  We similarly obtain properties (2) and (4) but due to the nature of the shuffles, as indicated by $\Sha$, noting that the relative order of subscripts $j$ and $k$ matter in these two cases.
\end{proof}

\begin{remark}
\label{rem:tilesXY}
Using the notation of Corollary \ref{cor:mult_pairs}, for $2 \leq j \leq p+q-1$, $j \in \Sigma_x$ (resp. $\Sigma_y$) if and only if $\sigma_j$ is the upper right entry of a tile of the snake graph whose diagonal is labeled as $X$ (resp. $Y$).
Furthermore, for $1 \leq k \leq p+q-2$ $k \in \Theta_x$ (resp. $\Theta_y$) if and only if $\theta_j$ is the lower left entry of a tile of the snake graph whose diagonal is labeled as $X$ (resp. $Y$).  

By convention, we say that $1 \in \Sigma_y$ and $(p+q-1) \in \Theta_y$ although since $1 = j \leq k$ for all $k$ (and $j \leq k = p+q-1$ for all $j$), we are guaranteed that $\sigma_j \theta_k$ projects down to $ + \sigma\theta$ in such cases regardless of whether we say $1 \in \Sigma_y$ or in $\Sigma_x$ (and analogous if we say $(p+q-1) \in \Theta_y$ or in $\Theta_x$.

Finally, the remaining tiles have diagonals labeled with $Z$'s and $\sigma_i$ in the lower left and $\theta_i$ in the upper right, where $i$ runs from $1$ to $(p+q-1)$ as we run through the snake graph in order from beginning to end.
\end{remark}

\begin{example}
Continuing with our running example where arc $\gamma = \gamma_{3/5}$ with default orientation 
$$\sigma_1 > \sigma_2 > \theta_2 > \sigma_4 > \sigma_5 > \theta_5 > \sigma_7 > \theta_7 > \theta_6 > (-\sigma_6) > \theta_4 > \theta_3 > (-\sigma_3) > \theta_1,$$
we have $\Sigma_x = \{3,6\}$, $\Sigma_y = \{1,2,4,5,7\}$, $\Theta_x = \{2,5\}$, and $\Theta_y = \{1,3,4,6,7\}$.  We get positive products

$\sigma_1 \theta_k$ for $1 \leq k \leq 7$, ~ $\sigma_2 \theta_k$ for $1 \leq k \leq 7$, ~ $\sigma_3 \theta_k$ for $2 \leq k \leq 7$, ~ $\sigma_4 \theta_k$ for $k \in \{1,3,4,5,6,7\}$, 

$\sigma_5 \theta_k$ for $k \in \{1,3,4,5,6,7\}$, ~~~~~~~~~~~~ $\sigma_6 \theta_k$ for $k \in \{2,5,6,7\}$, ~ $\sigma_7 \theta_k$ for $k \in \{1,3,4,6,7\}$.

Hence, $\sigma_3 \theta_1$, ~ $\sigma_4 \theta_2$, ~ $\sigma_5 \theta_2$, ~ $\sigma_6 \theta_1$, ~ $\sigma_6 \theta_3$, ~ $\sigma_6 \theta_4$, ~ $\sigma_7 \theta_2$, ~ $\sigma_7\theta_5$ all equate to $-\sigma\theta$ instead.

See the snake graph of Figure \ref{fig:snake-3-5} for comparison.
\end{example}

We now put all of this together to prove our main theorem.

\begin{proof} [Proof of \Cref{thm:supermarkov}]

We use \cite[Theorem 5.2]{moz22b} which complements Theorems \ref{thm:12-entry} and \ref{thm:generic} from above to give a combinatorial formula (in terms of double dimer covers) for the super $\lambda$-length of $\gamma = \gamma_{p/q}$ relative to the default orientation of the polygon $T_\gamma$.  Lemma \ref{lem:snake_word} ensures that the snake graph associated to $T_\gamma$ indeed has the same pattern of tiles as in the ordinary Markov case of \cite{propp05}, and Lemma \ref{lem:negations} describes how to insert negative signs into certain $\mu$-invariants of $T_\gamma$ to stay in the equivalence class of the cyclic orientation on the triangulated once-punctured torus in the fundamental domain.  Lemma \ref{lem:pos_ordering} and Corollary \ref{cor:mult_pairs} then records which products of the form $\sigma_j \theta_k$ project down to $\sigma \theta$ versus its negation $-\sigma \theta$ with respect to the positive ordering.

We use Remark \ref{rem:tilesXY} to conclude that the only cycles that survive this enumeration as $+ \sigma \theta$ are those corresponding to cycles from tile $X$ to tile $X$, tile $Y$ to tile $Y$, or tile $Z$ to tile $Z$.  Similarly, the only cycles that survive as $-\sigma \theta$ are those from tile $X$ to tile $Y$ or from tile $Y$ to tile $X$. 

In particular, going from tile $Z$ to tile $Z$ has $\sigma_j$ to $\theta_k$ where $j \leq k$.  Going from $X$ to $X$ has $\theta_k$ to $\sigma_j$ where $k < j$ but $k \in \Theta_x$ and $j \in \Sigma_x$.  Thus we negate twice to see $\theta_k\sigma_j$ projects down to $ - \theta\sigma = +\sigma\theta$.  Similarly, going from $Y$ to $Y$ has $\theta_k$ to $\sigma_j$ where $k < j$ but $k \in \Theta_y$ and $j \in \Sigma_y$, yielding $+\sigma\theta$ after two negations again.  

If we instead go from $X$ to $Y$, we obtain $\theta_k \sigma_j$ where $k < j$ and $k \in \Theta_x$ but $j \in \Sigma_y$.  We thus can only negate once in this case and project down to $+\theta\sigma = - \sigma\theta$ in this case.  Analogously, going from $Y$ to $X$ yields $\theta_k \sigma_j$ where $k < j$, $k \in \Theta_y$ and $j \in \Sigma_x$, and we project down to $+\theta\sigma = - \sigma\theta$.

Finally, going from tile $Z$ to $X$ or $Y$, or the reverse, will yield $\sigma_j \sigma_k = 0$ or $\theta_j \theta_k = 0$, respectively.

Any double dimer cover that includes either more than one cycle or a cycle of even length will lead to a $\sigma^2=0$ or $\theta^2=0$ and hence does not contribute either.  Also the double dimer covers with no cycles are in bijection with ordinary perfect matchings with doubled edges used, and hence were already included in the enumeration counted by the ordinary Markov number $M_{p/q}$.
\end{proof}

\begin{figure}
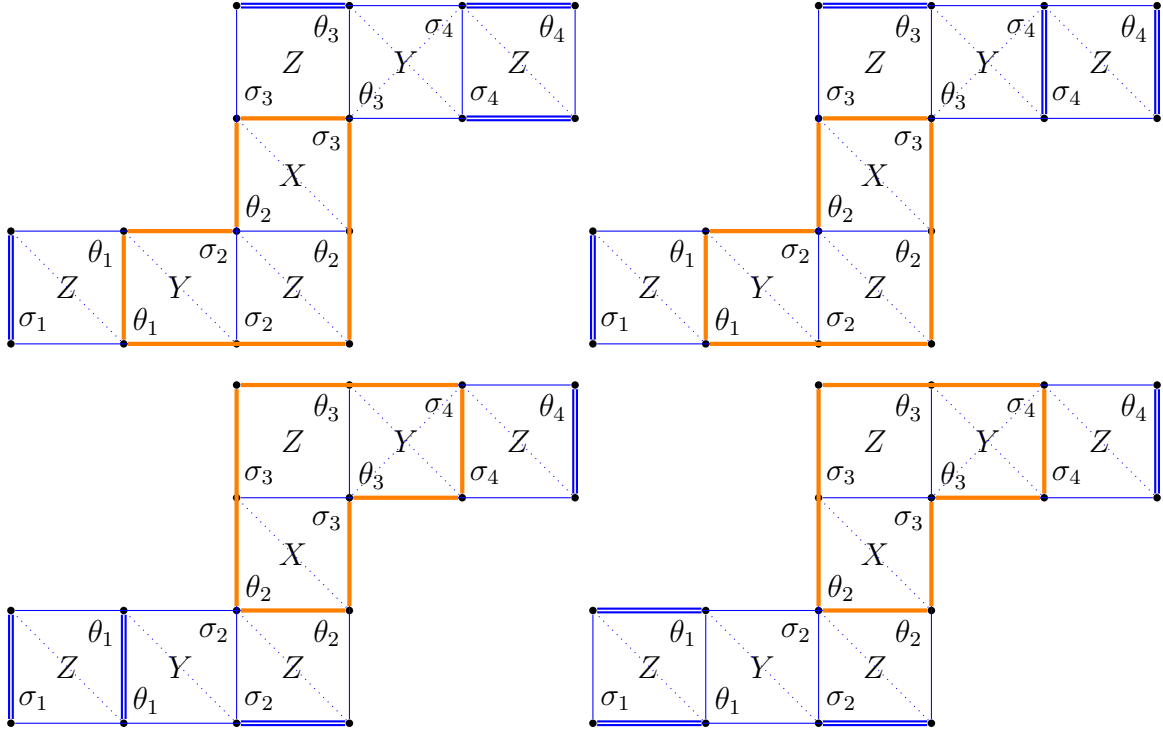



\caption{The four double dimer covers contributing negatively to $SM_{2/3} = M_{2/3} + \hat{M}_{2/3}\sigma\theta = 29 + 74 \sigma\theta = 29 + (78 - 4)\sigma\theta$.}
\label{fig:DDexamplesNeg}
\end{figure}

\begin{example}

See Figures \ref{fig:DDexamplesPos}, \ref{fig:DDexamplesPos2}, and \ref{fig:DDexamplesNeg} for double dimer covers counted by $\hat{M}_{2/3}$, i.e. the soul of $SM_{2/3} = M_{2/3} + \hat{M}_{2/3}\sigma\theta = 29 + 74 \sigma\theta = 29 + (78 - 4) \sigma\theta$.  In Figures \ref{fig:DDexamplesPos} and \ref{fig:DDexamplesPos2}, the numbers next to each representative signify the number of double dimer covers containing this particular cycle of odd length in the snake graph, while Figure \ref{fig:DDexamplesNeg} shows all four double dimer covers corresponding to the $-4\sigma\theta$.

Double dimer covers consisting entirely of doubled edges biject with ordinary perfect matchings (or dimer covers) of the snake graph $G_{2/3}$ which is counted by the ordinary Markov number $29$.  Cycles of single edges are indicated in orange and contribute $\pm \sigma\theta$ accordingly.
\end{example}

\begin{remark}
If we let the $\lambda$-lengths of the three arcs $x$, $y$, and $z$ in the initial triangulation be variables instead of setting them to be $1$, the argument still goes through in the same way and we get formulas in $\mathbb{Z}[x^{\pm 1/2}, y^{\pm 1/2}, z^{\pm 1/2}]\langle \sigma\theta \rangle$ instead.
\end{remark}

\section{Open questions related to Super Markov numbers}
\label{sec:open}

Despite the signed generating functions appearing in the proof and the provided formulas for super Markov numbers, as well as their weighted version, 
in practice, all examples we have computed resulted in expressions with positive signs only.  This leads us to the following conjecture and question.

\begin{conj} \label{conj:no-signs}
Assuming the spin-structure on the once-punctured torus induced from the cyclic orientation of the two triangles in the fundamental domain (clockwise on one of them, counter-clockwise on the other), the formulas for super $\lambda$-lengths of arcs in the associated decorated super Teichm\"uller space can all be expressed using only positive coefficients.
\end{conj}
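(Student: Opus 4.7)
The plan is to prove the conjecture by constructing a sign-reversing involution on the set of single-cycle double dimer covers of the snake graph $G_{p/q}$. By Corollary \ref{cor:mult_pairs} and Remark \ref{rem:tilesXY}, negative contributions in the signed expansion come exclusively from covers whose unique odd cycle has its leftmost tile of type $X$ and rightmost tile of type $Y$, or vice versa; all other single-cycle covers, as well as all cycle-free double dimer covers (which biject with ordinary perfect matchings), contribute positively. The goal is to match each $XY$- or $YX$-cycle cover with a $YY$- or $XX$-cycle cover carrying the same monomial in $x^{\pm 1/2}, y^{\pm 1/2}, z^{\pm 1/2}$, so that the two cancel and the net signed sum is manifestly non-negative.

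First, I would analyze the local geometry of an $XY$-cycle. Between its endpoints, the snake graph $G_{p/q}$ must contain at least one ``turn,'' i.e.\ a $Z$-tile at which a vertical run of $X$-tiles meets a horizontal run of $Y$-tiles; this is forced by the alternation of $N$ and $E$ steps in the truncated Christoffel word. Second, I would define the involution as a local modification of the cycle at a canonically chosen turn (say, the leftmost one in the cycle), exchanging a short arc of the cycle with an alternate arc around the adjacent $Z$-tile so that exactly one endpoint tile changes type. Third, I would verify that the involution is well-defined, genuinely involutive, and pairs every negatively signed single-cycle cover with a positively signed one, which would establish the conjecture for $SM_{p/q}$ and, by the same argument with generic $\{x,y,z\}$, for the full weighted expansion.

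The main obstacle, and the reason this is presented as a conjecture, is verifying that the local swap preserves the full weighted contribution. Tiles of types $X$, $Y$, and $Z$ carry different edge weights under Step 4 of the snake-graph construction (horizontal edges have weight $y$ for $X$- and $Z$-tiles but weight $z$ for $Y$-tiles, and similarly for vertical edges), so a priori the two cycles produced by the involution may carry different monomials in $x, y, z$, breaking the cancellation. Making the involution work will likely require either enlarging the local move to simultaneously adjust doubled edges neighboring the swapped arc, or proving that the positioning of turns dictated by the Christoffel word forces the monomials before and after the swap to coincide.

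As a backup, should a purely combinatorial involution prove elusive, I would attempt an inductive proof along the Stern--Brocot tree using the super Ptolemy exchange derived from the super Markov equation. Specifically, assuming each entry of a super Markov triple $\{x, y, z\}$ has non-negative soul, the mutated entry $(y^2 + yz\sigma\theta + z^2)x^{-1}$ must be shown to have non-negative soul as well; the body divides exactly by the classical Markov identity, and one would then argue that the remaining soul coefficient is a non-negative combination of the input souls. Chaining this inductive step along any path in the Stern--Brocot tree from the base triple $\{1,1,1\}$ would give the conjecture, at least for the numerical super Markov numbers, and an analogous induction with symbolic $\{x,y,z\}$ could address the weighted case.
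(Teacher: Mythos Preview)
The statement you are addressing is \Cref{conj:no-signs}, which the paper explicitly records as an \emph{open conjecture} in Section~\ref{sec:open}; there is no proof in the paper to compare your proposal against. The paper only offers computational evidence (Tables~\ref{table:Ovsienko} and~\ref{table:MoreSM}) and follows the conjecture with a question asking precisely for the kind of sign-cancelling combinatorial argument you sketch.

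On the substance of your two proposed attacks: both are reasonable starting points, but neither is yet a proof. For the sign-reversing involution, the weight-preservation obstacle you flag is genuine and is exactly why the paper leaves this open; you would need to exhibit an explicit local move and verify it on, say, $G_{2/3}$ and $G_{3/5}$ before the plan becomes credible. For the inductive approach along the Stern--Brocot tree, note that the soul of the mutated entry $(y^{2}+yz\sigma\theta+z^{2})x^{-1}$ works out to
\[
\frac{2M_y\hat M_y + 2M_z\hat M_z + M_yM_z - M_{x'}\hat M_x}{M_x},
\]
so positivity of the inputs does \emph{not} immediately give positivity of the output: the term $-M_{x'}\hat M_x$ can be large, and you would need a quantitative invariant (e.g.\ a bound of the form $\hat M \le c\cdot M\log M$ or similar) propagated along the tree, not merely the sign. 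As written, neither route closes the gap, which is consistent with the statement's status as a conjecture.
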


\begin{question}
Assuming the correctness of Conjecture \ref{conj:no-signs}, is there an adaptation of the above combinatorial interpretation, \Cref{def:markov_soul}, which uses a signed enumeration of double dimer covers, requiring only a positive expansion formula?  In particular, can one further restrict which double dimer covers contribute to the partition function (similar to how Lindstrom-Gessel-Viennot often allows one to use the signed expansion formula of a matrix determinant as the positive generating function for non-intersecting lattice paths) therefore manifesting a solely positive enumeration with no cancellations of signed terms needed?
\end{question}

As another direction, we note that V. Osienko \cite{ovsienko2023shadow} considers a shadow sequence related to Markov numbers.  However, his values differ from the ones we obtain as coefficients of the $\varepsilon = \theta\sigma$ in $SM_{p/q}$'s.  

\begin{question}
What is the relationship between our family of Super Markov numbers and the shadow sequences, and shadow Markov equation, that Ovsienko obtains by deforming the relevant difference equations, see Table \ref{table:Ovsienko}?
\end{question}

\begin{table}
\begin{tabular}{cc}
Ovsienko Shadow Markov Number & Super Markov Number \\
$2+ \textcolor{red}{4\epsilon}$ & $SM_{1/1} = 2 + \textcolor{blue}{\sigma\theta}$ \\
$5+ \textcolor{red}{13\epsilon}$ & $SM_{1/2} = 5 + \textcolor{blue}{6 \sigma\theta}$ \\
$13+ \textcolor{red}{40\epsilon}$ & $SM_{1/3} = 13 + \textcolor{blue}{26 \sigma\theta}$ \\
$34+ \textcolor{red}{120\epsilon}$ & $SM_{1/4} = 34 + \textcolor{blue}{97 \sigma\theta}$\\
$89+ \textcolor{red}{354\epsilon}$ & $SM_{1/5} = 89 + \textcolor{blue}{332 \sigma\theta}$\\
$29+ \textcolor{red}{117\epsilon}$ & $SM_{2/3} = 29 + \textcolor{blue}{74 \sigma\theta}$\\
$194+ \textcolor{red}{976\epsilon}$ & $SM_{2/5} = 194 + \textcolor{blue}{801 \sigma\theta}$\\
$169+ \textcolor{red}{921\epsilon}$ & $SM_{3/4} = 169 + \textcolor{blue}{688 \sigma\theta}$\\
$433+ \textcolor{red}{2592\epsilon}$ & $SM_{3/5} = 433 + \textcolor{blue}{2032 \sigma\theta}$
\end{tabular}
\caption{Ovsienko's Shadow Markov Numbers versus Super Markov Number.}
\label{table:Ovsienko}
\end{table}

We note that in the case of Super Fibonacci numbers, which coincide with Super Markov numbers of the form $SM_{1/q}$, Ovsienko similarly studied a related, but different, shadow sequence of Fibonacci numbers.  However, in this case, the relationship between his work and ours has been clarified, as we indicated in \cite[Footnote 8]{moz22}.

\begin{table}
\begin{tabular}{cccccccccccccc}
$p / q$ & 2 & 3 & 4 & 5 & 6 & 7 & 8 & 9 & 10 & 11 & 12 & 13 \\
1 & 6 & 26 & 97 & 332 & 	1076 & 3361 & 10226 & 30510 & 89665 & 260376 & 748776 & 2136001 \\
2 & ~ & 74 & ~ & 801 & ~ & 7714 & ~& 68718 & ~ & 581249 & ~ & 4743966 \\
3 & ~ & ~ & 668 & 2032 & ~ & 18192 & 53724 & ~ & 454436 & 1306168 & ~ & 10567054 \\
4 & ~ & ~ & ~ & 5284 & ~ & 44257 & ~ & 364778 & ~ & 2971498 & ~ & 23705633 \\
5 & ~ & ~ & ~ & ~ & 38914 & 110138 & 310113 & 868730 & ~ & 6837825 & 19232192 & 53829172 \\
6 & ~ & ~ & ~ & ~ & ~ & 274126 & ~ & ~ & ~ & 16076512 & ~ & 122875168 \\
\end{tabular}
\caption{Coefficients of $\theta\sigma$ in $SM_{p/q}$, for small and relatively prime values of $p$ and $q$, where columns are $q$ and rows are $p$.}
\label{table:MoreSM}
\end{table}

Continuing from Table \ref{table:Ovsienko}, coefficients of $\theta\sigma$ in additional Super Markov Numbers grow as in Table \ref{table:MoreSM}.  This data motivates the following question, which the author thanks E. Banaian for bringing to his attention:
\begin{question}
Is the analogue of Aigner's mononicity conjecture \cite{aigner2015markov} true for the $\hat{M}_{p/q}$'s, i.e. the  souls of Super Markov numbers?  In other words, if we let $p_1, p_2 < q_1, q_2$, with $\gcd(p_i, q_i) = 1$ in all cases, do we have the three inequalities
\begin{eqnarray} \hat{M}_{p_1/q_1} &<&  \hat{M}_{p_2/q_1} \mathrm{~~if~~} p_1 < p_2? \\
\hat{M}_{p_1/q_1} &<&  \hat{M}_{p_1/q_2} \mathrm{~~if~~} q_1 < q_2? \\
\hat{M}_{p_1/q_1} &<&  \hat{M}_{p_2/q_2} \mathrm{~~if~~} p_1 + q_1 = p_2 = q_2 \mathrm{~and~} q_1 < q_2?
\end{eqnarray}
\end{question}

Note that Rabideau-Schiffler \cite{rabideau2020continued} proved the second of these inequalities for ordinary Markov numbers using cluster algebras and snake graphs, and McShane \cite{mcshane2021convexity} used convexity to prove all three of these inequalities for ordinary Markov numbers.

\section{Applications to Super Annuli}
\label{sec:Annuli}

As a generalization of the case of Super Fibonacci numbers studied in \cite[Sec. 11]{moz22} and \cite[Sec. 6]{moz22b}, instead of considering the decorated super Teichm\"uller space associated to an annulus with one marked point on each boundary, we can instead consider the decorated super Teichm\"uller space associated to an annulus with $p$ marked points on one boundary and $q$ marked points on the other.

We begin by focusing our investigation on marked annuli where $p=1$ (without loss of generality assume this is the inner boundary) and such that the initial triangulation is assumed to consist entirely of bridging arcs with one arc emanating from $(q-1)$ of the marked points on the outer boundary and two arcs emanating from the $q$th marked point on the outer boundary.  We label the arcs from $1$ to $(q+1)$ in clockwise order (with respect to the inner boundary) beginning with one of the two arcs bridging the unique marked point on the inner boundary to the $q$th marked point, and ending with the other arc bridging the unique marked point on the inner boundary to the $q$th marked point.  Let $x_1, x_2, \dots, x_{q+1}$ denote the super $\lambda$-lengths of these initial arcs accordingly.  For $1 \leq i \leq q$, let  $\theta_i$ denote the $\mu$-invariant defined by the triangle defined by the arcs $x_i$, $x_{i+1}$, as well as part of the outer boundary, and let $\sigma_{q+1}$ denote the $\mu$-invariant defined by the triangle defined by the arcs $x_{q+1}$, $x_1$, and the inner boundary.  We also choose a spin structure so that arc $1$ is oriented towards the inner boundary point while the arcs, $2$, $3$, $\dots$, $q$ are oriented away from the marked point on the inner boundary.  This orientation ensures that when we mutate arc $1$, then $\theta_1$ lies to its left and $\sigma_{q+1}$ lies to its right, mimicking the configuration of Figure \ref{fig:super_ptolemy} (Left).  See Figure \ref{fig:annulus} for an example when $q=2$.

\begin{figure}
\begin{tikzpicture}[scale=0.7]
    \node () at (-1,1.5) {$\theta_1$};
    \node () at (1,1.5) {$\theta_2$};    
    \node () at (0,-1.75) {$\sigma_3$};

    \draw (0,0) circle (1);
    \draw (0,0) circle (2.5);

    \draw [-->-, blue] (0,1) -- (0,2.5);

 \draw[-->-, color=red, domain=1.0:0.5, samples=100] plot ({(-2.5+1.5*\x)*cos(2*pi*\x r - pi/2 r}, {(-1)*(4.0-3.0*\x)*sin(2*pi*\x r - pi/2 r)});

    \draw[-->-, color=black, domain=0.5:1, samples=100] plot ({(2.5-1.5*\x)*cos(2*pi*\x r - pi/2 r}, {(-1)*(4.0-3.0*\x)*sin(2*pi*\x r - pi/2 r)});

\node () at (-1.75,0.75) {$x_1$};
\node () at (-0.35,2) {$x_2$};
\node () at (1.75,0.75) {$x_3$};

\node () at (0, 0.5) {$p=1$};
\node () at (0, 3.0) {$1$};
\node () at (0, -3.0) {$q=2$};

\end{tikzpicture}
\begin{tikzpicture}[scale=0.6,every node/.style={sloped,allow upside down}]
    \draw (0,-2.5) node {};

    \draw (-6.5,2)  -- (10.5,2);
    \draw (-6.5,-2) -- (10.5,-2);

    \foreach \x in {-6, 2}
        \draw (\x+4.25, -0.5) node {$\sigma_3$};

    \foreach \x in {-2, 6}
        \draw (\x-2.25, +1.5) node {$\theta_2$};

   \foreach \x in {2,10}
        \draw (\x-1.25, +1.5) node {$\theta_1$};

    \draw (-6.75, 0) node {$\cdots$};
    \draw (11.25,  0)  node {$\cdots$};

    \foreach \x in {-10, -2, 6}
        \draw [style=blue] (\x+4,-2) to node {\midarrow}  (\x+4,2);        
    \foreach \x in {-10, -2, 6}        
        \node () at (\x+4.55, 0.5) {$x_2$};        

    \foreach \x in {-2, 6}
        \draw [style=red] (\x,2) to node {\midarrow}  (\x+4,-2);
    \foreach \x in {-2, 6}        
        \node () at (\x+2.55, 0) {$x_1$};
                
    \foreach \x in {-6, 2}
        \draw [color=red] (\x,-2) -- node {\midarrow} (\x+4,2);
    \foreach \x in {-6, 2}        
        \node () at (\x+1.45, 0) {$x_3$};

    \foreach \x in {-6, 2, 10} {
        \draw [fill=black] (\x,-2) circle (0.05);
        \node () at (\x, -3) {$p=1$};
        \node () at (\x, 3) {$1$};
        \node () at (-2, 3) {$q=2$};
        \node () at (6, 3) {$q=2$};
        
    }
    \foreach \x in {-6, -2, 2, 6, 10} {        
        \draw [fill=black] (\x,2) circle (0.05);

    }

\end{tikzpicture}
\caption{(Left): Triangulation of an annulus when $q=2$. (Right): its universal cover.}
\label{fig:annulus}
\end{figure}
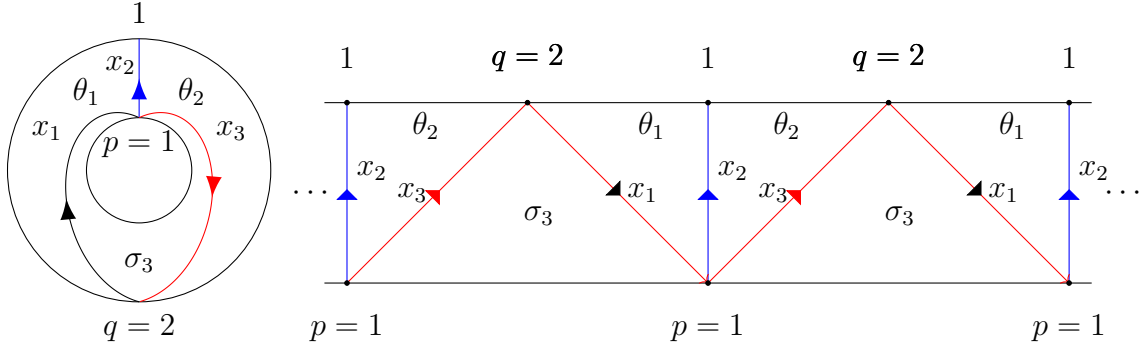
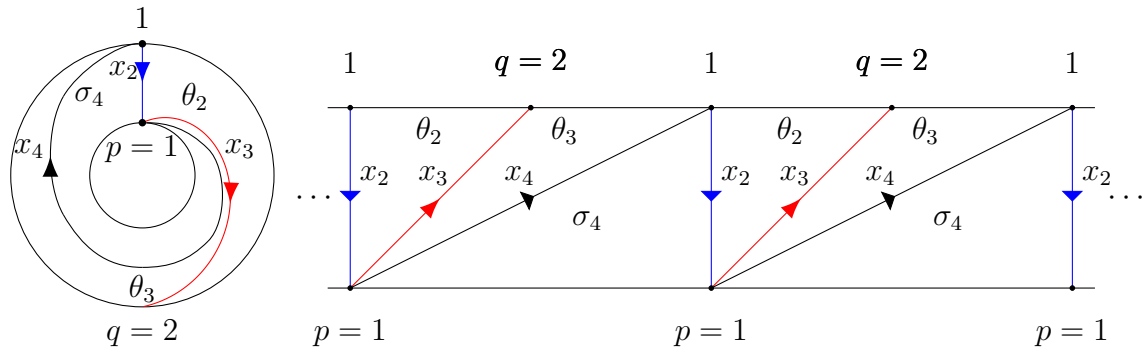
\begin{figure}
\begin{tikzpicture}[scale=0.7]
    \node () at (-1,1.5) {$\sigma_4$};
    \node () at (1,1.5) {$\theta_2$};    
    \node () at (0,-2.15) {$\theta_3$};

    \draw (0,0) circle (1);
    \draw (0,0) circle (2.5);

    \draw [-->-, blue] (0,2.5) -- (0,1);

 \draw[-->-, color=red, domain=1.0:0.5, samples=100] plot ({(-3.5+2.5*\x)*cos(2*pi*\x r - pi/2 r}, {(-1)*(4.0-3.0*\x)*sin(2*pi*\x r - pi/2 r)});

\begin{scope}[xscale= -1]
		\node [circle,fill=black,inner sep=1pt] (1) at (0, 1) {};
		\node [ ] (8) at (1.5, 1.5) {};
		\node [ ] (9) at (1.5, -0.75) {};
		\node [ ] (10) at (0, -1.75) {};
		\node [ ] (11) at (-1.25, -1.25) {};
		\node [ ] (13) at (-1.25, 0.5) {};
		\node [circle,fill=black,inner sep=1pt] (17) at (0, 2.5) {};

		\draw [color=black, in=45, out=180] (1) to (13.center);
		\draw [color=black, in=135, out=-135, looseness=0.75] (13.center) to (11.center);
		\draw [color=black, in=-180, out=-45] (11.center) to (10.center);
		\draw [color=black, in=-120, out=0] (10.center) to (9.center);
		\draw [-->-, color=black, in=-60, out=60, looseness=0.75] (9.center) to (8.center);
		\draw [color=black, in=0, out=120, looseness=0.75] (8.center) to (17);
\end{scope}

\node () at (-2.15,0.5) {$x_4$};
\node () at (-0.35,2) {$x_2$};
\node () at (1.85,0.5) {$x_3$};

\node () at (0, 0.5) {$p=1$};
\node () at (0, 3.0) {$1$};
\node () at (0, -3.0) {$q=2$};

\end{tikzpicture}
\begin{tikzpicture}[scale=0.6,every node/.style={sloped,allow upside down}]
    \draw (0,-2.5) node {};

    \draw (-6.5,2)  -- (10.5,2);
    \draw (-6.5,-2) -- (10.5,-2);

    \foreach \x in {-6, 2}
        \draw (\x+1.75, 2-0.5) node {$\theta_2$};

    \foreach \x in {-2, 6}
        \draw (\x+1.25, -2+1.5) node {$\sigma_4$};

   \foreach \x in {2,10}
        \draw (\x-3.25, +1.5) node {$\theta_3$};

    \draw (-6.75, 0) node {$\cdots$};
    \draw (11.25,  0)  node {$\cdots$};

    \foreach \x in {-10, -2, 6}
        \draw [style=blue] (\x+4,2) to node {\midarrow}  (\x+4,-2);
    \foreach \x in {-10, -2, 6}        
        \node () at (\x+4.55, 0.5) {$x_2$};

    \foreach \x in {-2, 6}
        \draw [-->-,color=red] (\x-4,-2) to (\x,2);
    \foreach \x in {-2, 6}        
        \node () at (\x-2.15, 0.5) {$x_3$};
                
    \foreach \x in {2,10}
        \draw [color=black] (\x-8,-2) -- node {\midarrow} (\x,2);
    \foreach \x in {-6, 2}        
        \node () at (\x+3.75, 0.5) {$x_4$};

    \foreach \x in {-6, 2, 10} {
        \draw [fill=black] (\x,-2) circle (0.05);
        \node () at (\x, -3) {$p=1$};
        \node () at (\x, 3) {$1$};
        \node () at (-2, 3) {$q=2$};
        \node () at (6, 3) {$q=2$};
        
    }
    \foreach \x in {-6, -2, 2, 6, 10} {        
        \draw [fill=black] (\x,2) circle (0.05);
    }

\end{tikzpicture}
\caption{Mutation of the triangulation of the annulus from Figure \ref{fig:annulus}.}
\label{fig:annulus2}
\end{figure}

Mutating $1, 2, 3, \dots, (q+1)$ induces a Dehn twist of the annulus and repeatedly applying this mutating sequence yields a one-parameter sequence of super $\lambda$-lengths $x_{q+2}, x_{q+3}, \dots$.  Allowing mutations along the reverse of this sequence, i.e. $(q+1), q, \dots, 2, 1, \dots$, analogously yields $x_0, x_{-1}, \dots$, and thus we obtain super $\lambda$-lengths $x_n$, for all integers $n$, associated to all bridging arcs in this annulus.  We also construct $\mu$-invariants $\sigma_n$ for all integers $n$ by increasing (resp. decreasing) indices by one as we apply mutation (resp. backwards mutation), and invariants 
$\theta_n$ for all integers $n$ by increasing (resp. decreasing) indices by $q$ as we apply mutation (resp. backwards mutation).

Due to the reversal of orientation, see Figure \ref{fig:super_ptolemy} (Left), observe that mutating arc $1$ leads to the reversal of orientation of arc $2$.  Additionally, the newly created arc $(q+2)$ is oriented away from the inner boundary.  Consequently, by adding one to every index, we recover the same local configuration as the original triangulation (up to winding or shearing).  Iterating this process, the local oriented triangulation is also preserved under Dehn twists.

Applying the Super Ptolemy Relation yields the defining recurrences for this super-integrable system:
\begin{eqnarray}
\label{eq:x} x_n x_{n-q-1} &=& x_{n-1}x_{n-q} + 1 + \sqrt{x_{n-1}x_{n-q}} \sigma_{n-1} \theta_{n-q-1} \\
\label{eq:sig} \sigma_n &=& \left(\sigma_{n-1}\sqrt{x_{n-1}x_{n-q}} - \theta_{n-q-1}\right) / \sqrt{x_{n-q-1}x_n} \\
\label{eq:thet} \theta_{n-1} &=& \left(\theta_{n-q-1}\sqrt{x_{n-1}x_{n-q}} + \sigma_{n-1}\right) / \sqrt{x_{n-q-1}x_n}
\end{eqnarray}

If we then multiply Equations (\ref{eq:sig}) and (\ref{eq:thet}) together, we get 
$\sigma_n \theta_{n-1} = \frac{x_{n-1}x_{n-q} + 1}{x_n x_{n-q-1}} \sigma_{n-1} \theta_{n-q-1}$, which we can use 
$(\sigma_{n-1}\theta_{n-q-1})^2 =0$ to obtain the expression
$$\sigma_n \theta_{n-1} = \frac{x_{n-1}x_{n-q} + 1 + \sqrt{x_{n-1}x_{n-q}} \sigma_{n-1}\theta_{n-q-1}}{x_n x_{n-q-1}} \sigma_{n-1} \theta_{n-q-1} = \sigma_{n-1} \theta_{n-q-1}.$$
In particular, the product of $\mu$-invariants corresponding to the quadrilateral where we mutated is a constant of this mutation.

\begin{example}
For $q=1$ and if we initialize $x_1=x_2=1$ and
$\theta_1 = \theta$, $\sigma_2 = \sigma$, then we recover $\sigma_n \theta_{n-1} = \sigma_{n-1}\theta_{n-2}$ for all $n$.  Letting $\sigma\theta$ denote this constant value for $\sigma_n \theta_{n-1} $ (even as $n$ varies), we 
thereby recover $$x_n x_{n-2} = x_{n-1}^2 + 1 + x_{n-1} \sigma \theta$$ for all $n$ and get the Super Fibonacci numbers as studied in \cite[Sec 11]{moz22}.
\end{example}

Additionally, similar to the use of modified $\mu$-invariants in \cite[Appendix A]{moz21} or the $\Delta_{jk}^i$'s as defined in \cite[Definition 1.2]{moz22b}, we define
normalized $\mu$-invariants by multiplying each $\mu$-invariant by $\sqrt{\frac{c}{ab}}$ when the $\mu$-invariant is associated to the angle of the triangle $(a,b,c)$ opposite of the side of weight $c$.  Using the angles opposite boundary edges (of weight $1$), we get the following normalized $\mu$-invariants for all $n \in \mathbb{Z}$:
$$\widetilde{\sigma_n} = \sigma_n / \sqrt{x_n x_{n-q}} \mathrm{~~~and~~~} \widetilde{\theta_{n-1}} = \theta_{n-1} / \sqrt{x_n x_{n-1}}.$$
Using the notation of normalized $\mu$-invariants, Equations (\ref{eq:x})-(\ref{eq:thet}) can be rewritten as the recurrences 
\begin{eqnarray}
\label{eq:x2} x_n x_{n-q-1} &=& x_{n-1}x_{n-q} + 1 + x_{n-1}x_{n-q} x_{n-q-1} \widetilde{\sigma_{n-1}} \widetilde{\theta_{n-q-1}}\\
\nonumber &=& x_{n-1}x_{n-q}\left(1 + x_{n-q-1} \widetilde{\sigma_{n-1}} \widetilde{\theta_{n-q-1}} \right) + 1 \\
\label{eq:sig2} \widetilde{\sigma_n} &=& \frac{x_{n-1}}{x_{n}} \widetilde{\sigma_{n-1}} - \frac{1}{x_{n}}\widetilde{\theta_{n-q-1}} \\
\label{eq:thet2} \widetilde{\theta_{n-1}} &=& \frac{x_{n-q}}{x_{n}}  \widetilde{\theta_{n-q-1}} + \frac{1}{x_{n}}\widetilde{\sigma_{n-1}}
\end{eqnarray}

\begin{example}
If $q=2$ and we initalize $x_1=x_2=x_3=1$, then 
$\widetilde{\theta_1}=\theta_1$, $\widetilde{\theta_2}=\theta_2$, $\widetilde{\sigma_3}= \sigma_3$, and we obtain
\begin{eqnarray*}
x_4 &=&  2 ~~~ + \hspace{3em}  \sigma_3 \theta_1\\
\widetilde{\theta_3} &=& ~~\theta_1 + \hspace{3em}  \sigma_3 \\ 
\widetilde{\sigma_4} &=& - \theta_1 +\hspace{3em}  \sigma_3 \\ ~ \\
x_5 &=& 3 + \theta_2 \theta_1 + \sigma_3 \theta_1 + \sigma_3 \theta_2 \\
\widetilde{\theta_4} &=& \hspace{2em} - \theta_1 + 2 \theta_2 +  \sigma_3 - \sigma_3 \theta_2 \theta_1 \\ 
\widetilde{\sigma_5} &=& \hspace{2em}  - \theta_1 - \theta_2 +  \sigma_3  \\ ~ \\
x_6 &=& 7 + \theta_2 \theta_1 + 7 \sigma_3 \theta_1 + 3 \sigma_3 \theta_2 \\
\widetilde{\theta_5} &=& \hspace{1.5em} ~~ 2 \theta_1 - \theta_2 +  4 \sigma_3 + 2 \sigma_3 \theta_2 \theta_1 \\  
\widetilde{\sigma_6} &=& \hspace{1.5em} - 3 \theta_1 - 2 \theta_2 +  \sigma_3 - \sigma_3 \theta_2 \theta_1 \\  ~ \\
x_7 &=& 11 + 7 \theta_2 \theta_1 + 9 \sigma_3 \theta_1 + 9 \sigma_3 \theta_2 \\
x_8 &=& 26 + 10 \theta_2 \theta_1 + 40 \sigma_3 \theta_1 + 25 \sigma_3 \theta_2 \\
x_9 &=& 41 + 38 \theta_2 \theta_1 + 56 \sigma_3 \theta_1 + 56 \sigma_3 \theta_2 \\
x_{10} &=& 97 + 64 \theta_2 \theta_1 + 204 \sigma_3 \theta_1 + 148 \sigma_3 \theta_2  \\
x_{11} &=& 153 + 186 \theta_2 \theta_1 + 296 \sigma_3 \theta_1 + 296 \sigma_3 \theta_2 \\
x_{12} &=& 362 + 342 \theta_2 \theta_1 + 969 \sigma_3 \theta_1 + 760 \sigma_3 \theta_2 
\end{eqnarray*}
as the first several resulting expressions from applying this recurrence.  We will provide a combinatorial interpretation for the weighted versions of the super $\lambda$-lengths $x_n$ in terms of signed double dimer covers as our next result. 
\end{example}

\begin{prop}
Given an annulus with $p=1$ marked point on boundary and $q=2$ marked points on the other, the super $lambda$-lengths $x_n$ as defined above have the following combinatorial interpretation as a signed enumeration:

Consider the affine cluster algebra of type $\widetilde{A}_{1,2}$, which has an initial seed given by the quiver 

$\xymatrix{
1 \ar@/^1pc/[rr]  \ar@/_/[r] & 2 \ar@/_/[r] & 3
}$.  Let $G_n(1,2)$ denote the snake graph associated to the ordinary $\lambda$-length (equiv. cluster variable) $x_n$ for this cluster algebra, with tiles labeled as $1$, $2$, or $3$, following the construction of \cite{MSW_11}.  See Figure \ref{fig:Ann21}.

Then $x_n$ is the signed enumeration of double dimer covers on $G_n(1,2)$ as follows: 

Weight of all double dimer covers consisting entirely of doubled edges

+ \bigg(Weight of double dimer covers containing a single cycle whose leftmost tile is $1$ and whose rightmost tile is also $1$

+ Weight of double dimer covers containing a single cycle whose leftmost tile is $2$ and whose rightmost tile is $3$ \bigg) 
$\sigma_3 \theta_1$

+ \bigg(Weight of double dimer covers containing a single cycle whose leftmost tile is $2$ and whose rightmost tile is also $2$

- Weight of double dimer covers containing a single cycle whose leftmost tile is $3$ and whose rightmost tile is $1$ \bigg) 
$\theta_2\theta_1$

+ \bigg(Weight of double dimer covers containing a single cycle whose leftmost tile is $3$ and whose rightmost tile is also $3$

+ Weight of double dimer covers containing a single cycle whose leftmost tile is $1$ and whose rightmost tile is $2$ \bigg) 
$\sigma_3\theta_2$
\end{prop}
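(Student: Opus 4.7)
The plan is to mirror the strategy of the proof of \Cref{thm:supermarkov} in \Cref{sec:main_proof}, adapting each step to the annulus setting. First I would identify the polygon $T_n$ cut out of the universal cover of the triangulated annulus by the bridging arc $\gamma_n$ corresponding to $x_n$; the universal cover is an infinite strip, and after applying successive Dehn twists the arc $\gamma_n$ traces out a snake-like polygon whose sequence of triangles matches the $\widetilde{A}_{1,2}$ snake graph $G_n(1,2)$ of \cite{MSW_11}. Lifting the spin structure (arc $1$ oriented toward the inner boundary, arcs $2$ and $3$ oriented away) to this polygon and comparing with the default orientation on $T_n$ will give an analogue of \Cref{lem:negations}: certain $\mu$-invariants must be negated in order for the lifted orientation to represent the same spin class as the default orientation.

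Next I would compute the positive ordering on the three families of $\mu$-invariants $\{\sigma_{3+3k}\}$, $\{\theta_{1+3k}\}$, $\{\theta_{2+3k}\}$ as they appear in $T_n$, in the style of \Cref{lem:pos_ordering}, by walking from the tail of the polygon back to its head and applying \Cref{def:pos_ordering} at each interior edge. This yields a shuffle-type ordering partitioned according to which of the three tile labels $1$, $2$, $3$ the local $\mu$-invariant sits in. With this ordering in hand, I would then establish the annulus analogue of \Cref{cor:mult_pairs}: for each pair of indices producing a cycle in a double dimer cover, compute whether $\sigma_j \theta_k$ (after quotienting by the identifications $\sigma_{3+3k}\mapsto \sigma_3$, $\theta_{1+3k}\mapsto\theta_1$, $\theta_{2+3k}\mapsto\theta_2$) projects to $+\sigma_3\theta_1$, $-\sigma_3\theta_1$, $+\theta_2\theta_1$, $-\theta_2\theta_1$, $+\sigma_3\theta_2$, or $-\sigma_3\theta_2$. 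The five sign patterns in the statement should fall out as exactly the five endpoint-label combinations (leftmost, rightmost) $\in\{(1,1),(2,3)\}$, $(2,2),(3,1)$, $(3,3),(1,2)$, with the negative signs arising in the two cases $(3,1)$ and $(2,3)$ where the relative order in the positive ordering forces a transposition.

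With the sign analysis in place, I would invoke \cite[Theorem 5.2]{moz22b} (the general signed double dimer expansion formula for super $\lambda$-lengths relative to the default orientation), exactly as in the closing paragraph of the proof of \Cref{thm:supermarkov}. The double dimer covers with no cycle correspond to ordinary dimer covers and reproduce the classical $x_n$ by the Musiker–Schiffler–Williams formula applied to $G_n(1,2)$; covers with a single odd cycle give the $\sigma_3\theta_1$-, $\theta_2\theta_1$-, and $\sigma_3\theta_2$-coefficients, with signs as above; covers with an even cycle or with more than one cycle contribute zero because the resulting monomial in $\mu$-invariants contains a square $\theta_i^2=0$ or $\sigma_j^2=0$ after projection.

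The main obstacle I expect is the sign bookkeeping: unlike the once-punctured torus, where there were only two odd generators and the default–versus–cyclic discrepancy was captured by a single rule tied to occurrences of $01$ in the Christoffel word, the annulus with $q=2$ has three distinct classes of odd generators and the universal cover carries infinitely many copies, all identified via the Dehn twist. Keeping track of which copies get negated in the passage to the default orientation, and then verifying that the induced positive ordering yields exactly the endpoint-of-cycle sign rule stated (and in particular that cycles with one endpoint on a tile labeled $3$ and the other on a tile labeled $2$ or $1$ yield $+\sigma_3\theta_2$ or $-\theta_2\theta_1$ respectively, rather than a vanishing $\sigma\sigma$ or $\theta\theta$), is the delicate combinatorial step that the proof will hinge on.
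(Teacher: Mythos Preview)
Your overall strategy—lift to the universal cover, cut out the polygon $T_n$, determine the positive ordering, and invoke \cite[Theorem 5.2]{moz22b}—is exactly what the paper does. However, you are making the argument harder than it is in one place and slightly misstate the sign pattern in another.

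First, there is no analogue of \Cref{lem:negations} needed here. The paper observes that for the orientation of \Cref{fig:annulus} (arc $1$ toward the inner boundary, arcs $2$ and $3$ away), the lift to the strip is \emph{already} a default orientation on the polygon crossed by $\gamma_n$. So the whole ``which $\mu$-invariants get negated'' step that drove the torus argument simply does not appear; you go directly from the default orientation to the positive ordering via \Cref{def:pos_ordering}. This is why the paper's proof is much shorter than the proof of \Cref{thm:supermarkov}: the only work is reading off the positive ordering
\[
\cdots > \sigma_3 > \sigma_3 > \cdots > \sigma_3 \;>\; \theta_* > \theta_* > \cdots > \theta_*
\]
(with $\theta_1,\theta_2$ alternating at the tail) and then doing a three-line case check on the leftmost tile of a cycle.

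Second, your prediction that ``the negative signs arise in the two cases $(3,1)$ and $(2,3)$'' does not match the statement: only $(3,1)$ carries a minus sign in the final formula. In the paper's ordering every copy of $\sigma_3$ precedes every copy of $\theta_1$ and $\theta_2$. A cycle from Tile $2$ to Tile $3$ has lower-left label $\theta_1$ and upper-right label $\sigma_3$; the transposition gives $-\theta_1\sigma_3 = +\sigma_3\theta_1$, so the sign is absorbed. The genuinely surviving minus is the $(3,1)$ case, where the lower-left is $\theta_2$ and the upper-right is $\theta_1$, and the ordering among the $\theta$'s (the relevant $\theta_1$ comes \emph{after} this $\theta_2$) leaves $-\theta_2\theta_1$ with no compensating swap. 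Your final paragraph about multi-cycle covers vanishing because there are only three odd generators is correct and is exactly how the paper closes the argument.
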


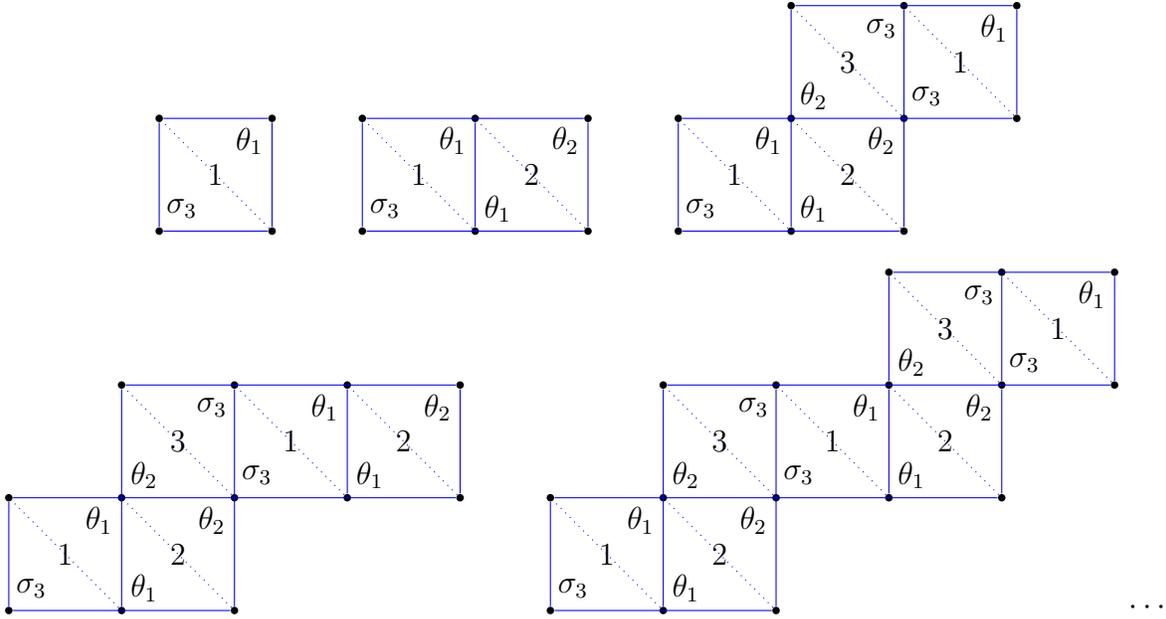
\begin{figure}

\begin{tikzpicture}[scale=0.3,every node/.style={sloped,allow upside down}]
    \node [circle,fill=black,inner sep=1pt] (0) at (0, 0) {};
    \node [circle,fill=black,inner sep=1pt] (1) at (5, 0) {};
    \node [circle,fill=black,inner sep=1pt] (4) at (0, 5) {};    
    \node [circle,fill=black,inner sep=1pt] (5) at (5, 5) {};
     
    \draw [style=blue] (0) to (1);
    \draw [style=blue] (1) to (5);
    \draw [style=blue] (4) to (5);
    \draw [style=blue] (0) to (4);

    \node [] (28) at (2.5, 2.5) {$1$};

    \draw [style=blue,dotted] (1) to (4);
          
    \node [] (28) at (1, 1) {$\sigma_3$};

    \node [] (28) at (4, 4) {$\theta_1$};

\end{tikzpicture}  \hspace{2em}
\begin{tikzpicture}[scale=0.3,every node/.style={sloped,allow upside down}]
    \node [circle,fill=black,inner sep=1pt] (0) at (0, 0) {};
    \node [circle,fill=black,inner sep=1pt] (1) at (5, 0) {};
    \node [circle,fill=black,inner sep=1pt] (2) at (10, 0) {};
    \node [circle,fill=black,inner sep=1pt] (4) at (0, 5) {};    
    \node [circle,fill=black,inner sep=1pt] (5) at (5, 5) {};
    \node [circle,fill=black,inner sep=1pt] (6) at (10, 5) {};
     
    \draw [style=blue] (0) to (2);
    \draw [style=blue] (4) to (6);

    \draw [style=blue] (0) to (4);
    \draw [style=blue] (1) to (5);       
    \draw [style=blue] (2) to (6);   

    \node [] (28) at (2.5, 2.5) {$1$};
    \node [] (29) at (7.5, 2.5) {$2$};

    \draw [style=blue,dotted] (1) to (4);
    \draw [style=blue,dotted] (2) to (5);
          
    \node [] (28) at (1, 1) {$\sigma_3$};
    \node [] (29) at (6, 1) {$\theta_1$};

    \node [] (28) at (4, 4) {$\theta_1$};
    \node [] (29) at (9, 4) {$\theta_2$};

\end{tikzpicture} \hspace{2em}
\begin{tikzpicture}[scale=0.3,every node/.style={sloped,allow upside down}]
    \node [circle,fill=black,inner sep=1pt] (0) at (0, 0) {};
    \node [circle,fill=black,inner sep=1pt] (1) at (5, 0) {};
    \node [circle,fill=black,inner sep=1pt] (2) at (10, 0) {};
    \node [circle,fill=black,inner sep=1pt] (3) at (0, 5) {};    
    \node [circle,fill=black,inner sep=1pt] (4) at (5, 5) {};
    \node [circle,fill=black,inner sep=1pt] (5) at (10, 5) {};
    \node [circle,fill=black,inner sep=1pt] (6) at (15, 5) {};
    \node [circle,fill=black,inner sep=1pt] (7) at (5, 10) {};    
    \node [circle,fill=black,inner sep=1pt] (8) at (10, 10) {};
    \node [circle,fill=black,inner sep=1pt] (9) at (15, 10) {};    
     
    \draw [style=blue] (0) to (2);
    \draw [style=blue] (3) to (6);
    \draw [style=blue] (7) to (9);

    \draw [style=blue] (0) to (3);
    \draw [style=blue] (1) to (7);       
    \draw [style=blue] (2) to (8);   
    \draw [style=blue] (6) to (9);           

    \node [] (28) at (2.5, 2.5) {$1$};
    \node [] (29) at (7.5, 2.5) {$2$};
    \node [] (30) at (7.5, 7.5) {$3$};
    \node [] (31) at (12.5, 7.5) {$1$};

    \draw [style=blue,dotted] (1) to (3);
    \draw [style=blue,dotted] (2) to (4);
    \draw [style=blue,dotted] (5) to (7);
    \draw [style=blue,dotted] (6) to (8);              
          
    \node [] (28) at (1, 1) {$\sigma_3$};
    \node [] (29) at (6, 1) {$\theta_1$};
    \node [] (30) at (6, 6) {$\theta_2$};
    \node [] (31) at (11, 6) {$\sigma_3$};

    \node [] (28) at (4, 4) {$\theta_1$};
    \node [] (29) at (9, 4) {$\theta_2$};
    \node [] (30) at (9, 9) {$\sigma_3$};
    \node [] (31) at (14, 9) {$\theta_1$};

\end{tikzpicture} 

\vspace{1em}

\begin{tikzpicture}[scale=0.3,every node/.style={sloped,allow upside down}]
    \node [circle,fill=black,inner sep=1pt] (0) at (0, 0) {};
    \node [circle,fill=black,inner sep=1pt] (1) at (5, 0) {};
    \node [circle,fill=black,inner sep=1pt] (2) at (10, 0) {};
    \node [circle,fill=black,inner sep=1pt] (3) at (0, 5) {};    
    \node [circle,fill=black,inner sep=1pt] (4) at (5, 5) {};
    \node [circle,fill=black,inner sep=1pt] (5) at (10, 5) {};
    \node [circle,fill=black,inner sep=1pt] (6) at (15, 5) {};
    \node [circle,fill=black,inner sep=1pt] (7) at (5, 10) {};    
    \node [circle,fill=black,inner sep=1pt] (8) at (10, 10) {};
    \node [circle,fill=black,inner sep=1pt] (9) at (15, 10) {};    

    \node [circle,fill=black,inner sep=1pt] (10) at (20, 5) {};
    \node [circle,fill=black,inner sep=1pt] (11) at (20, 10) {};

    \draw [style=blue] (0) to (2);
    \draw [style=blue] (3) to (10);
    \draw [style=blue] (7) to (11);

    \draw [style=blue] (0) to (3);
    \draw [style=blue] (1) to (7);       
    \draw [style=blue] (2) to (8);   
    \draw [style=blue] (6) to (9);
    \draw [style=blue] (10) to (11);           

    \node [] (28) at (2.5, 2.5) {$1$};
    \node [] (29) at (7.5, 2.5) {$2$};
    \node [] (30) at (7.5, 7.5) {$3$};
    \node [] (31) at (12.5, 7.5) {$1$};
    \node [] (32) at (17.5, 7.5) {$2$};

    \draw [style=blue,dotted] (1) to (3);
    \draw [style=blue,dotted] (2) to (4);
    \draw [style=blue,dotted] (5) to (7);
    \draw [style=blue,dotted] (6) to (8);      
    \draw [style=blue,dotted] (10) to (9);        
          
    \node [] (28) at (1, 1) {$\sigma_3$};
    \node [] (29) at (6, 1) {$\theta_1$};
    \node [] (30) at (6, 6) {$\theta_2$};
    \node [] (31) at (11, 6) {$\sigma_3$};

    \node [] (28) at (4, 4) {$\theta_1$};
    \node [] (29) at (9, 4) {$\theta_2$};
    \node [] (30) at (9, 9) {$\sigma_3$};
    \node [] (31) at (14, 9) {$\theta_1$};          

    \node [] (32) at (16, 6) {$\theta_1$};
    \node [] (33) at (19, 9) {$\theta_2$};

\end{tikzpicture} 
 \hspace{2em}
\begin{tikzpicture}[scale=0.3,every node/.style={sloped,allow upside down}]

    \node [circle,fill=black,inner sep=1pt] (0) at (0, 0) {};
    \node [circle,fill=black,inner sep=1pt] (1) at (5, 0) {};
    \node [circle,fill=black,inner sep=1pt] (2) at (10, 0) {};
    \node [circle,fill=black,inner sep=1pt] (3) at (0, 5) {};    
    \node [circle,fill=black,inner sep=1pt] (4) at (5, 5) {};
    \node [circle,fill=black,inner sep=1pt] (5) at (10, 5) {};
    \node [circle,fill=black,inner sep=1pt] (6) at (15, 5) {};
    \node [circle,fill=black,inner sep=1pt] (7) at (5, 10) {};    
    \node [circle,fill=black,inner sep=1pt] (8) at (10, 10) {};
    \node [circle,fill=black,inner sep=1pt] (9) at (15, 10) {};    

    \node [circle,fill=black,inner sep=1pt] (10) at (20, 5) {};
    \node [circle,fill=black,inner sep=1pt] (11) at (20, 10) {};    

    \node [circle,fill=black,inner sep=1pt] (12) at (15, 15) {};
    \node [circle,fill=black,inner sep=1pt] (13) at (20, 15) {};    
    
    \node [circle,fill=black,inner sep=1pt] (14) at (25, 10) {};
    \node [circle,fill=black,inner sep=1pt] (15) at (25, 15) {};

    \draw [style=blue] (0) to (2);
    \draw [style=blue] (3) to (10);
    \draw [style=blue] (7) to (14);
    \draw [style=blue] (12) to (15);

    \draw [style=blue] (0) to (3);
    \draw [style=blue] (1) to (7);       
    \draw [style=blue] (2) to (8);   
    \draw [style=blue] (6) to (9);
    \draw [style=blue] (10) to (11);          
    \draw [style=blue] (9) to (12);
    \draw [style=blue] (11) to (13);
    \draw [style=blue] (14) to (15);     

    \node [] (28) at (2.5, 2.5) {$1$};
    \node [] (29) at (7.5, 2.5) {$2$};
    \node [] (30) at (7.5, 7.5) {$3$};
    \node [] (31) at (12.5, 7.5) {$1$};
    \node [] (32) at (17.5, 7.5) {$2$};
    \node [] (33) at (17.5, 12.5) {$3$};
    \node [] (34) at (22.5, 12.5) {$1$};

    \draw [style=blue,dotted] (1) to (3);
    \draw [style=blue,dotted] (2) to (4);
    \draw [style=blue,dotted] (5) to (7);
    \draw [style=blue,dotted] (6) to (8);      
    \draw [style=blue,dotted] (10) to (9);        
    \draw [style=blue,dotted] (11) to (12);        
    \draw [style=blue,dotted] (14) to (13);

    \node [] (28) at (1, 1) {$\sigma_3$};
    \node [] (29) at (6, 1) {$\theta_1$};
    \node [] (30) at (6, 6) {$\theta_2$};
    \node [] (31) at (11, 6) {$\sigma_3$};

    \node [] (28) at (4, 4) {$\theta_1$};
    \node [] (29) at (9, 4) {$\theta_2$};
    \node [] (30) at (9, 9) {$\sigma_3$};
    \node [] (31) at (14, 9) {$\theta_1$};          

    \node [] (32) at (16, 6) {$\theta_1$};
    \node [] (33) at (19, 9) {$\theta_2$};
    
    \node [] (32) at (16, 11) {$\theta_2$};
    \node [] (33) at (19, 14) {$\sigma_3$};

    \node [] (32) at (21, 11) {$\sigma_3$};
    \node [] (33) at (24, 14) {$\theta_1$};

\end{tikzpicture}  
$\cdots$
\caption{Snake Graphs $G_n(1,2)$ for small $n$.}
\label{fig:Ann21}

\end{figure}

\begin{proof}
We observe that for each arc $n \geq 3$, if we lift to the universal cover and then restrict to the subtriangulation of triangles crossed by arc $n$, the orientation of Figure \ref{fig:annulus} is a default orientation.  However, if we then apply the procedure of Definition \ref{def:pos_ordering} to construct a positive ordering of the associated $\mu$-invariants, we obtain the inconsistent ordering
$$ \cdots \sigma_3 > \sigma_3 > \cdots \sigma_3 > \theta_1 > \theta_2 > \theta_1 > \theta_2 > \cdots > \theta_1$$ 

$$\mathrm{~~or~~}  \cdots \sigma_3 > \sigma_3 > \cdots \sigma_3 > \theta_2 > \theta_1 > \theta_2 > \cdots > \theta_1$$ 
depending on the parity of $n$.  See Figure \ref{fig:Ann21b} for examples of how this affects the ordering of $\mu$-invariants as they arise on tiles of the associated snake graphs, depending on whether $n$ is odd or even.  In particular, comparing with Figure \ref{fig:Ann21}, the $\mu$-invariants coming first in the positive ordering are labeled with higher numbers (in parentheses) and those coming later are labeled with smaller numbers.

Any double dimer cover including a cycle whose leftmost triangle is $\sigma_3$ in Tile $1$ therefore has a rightmost triangle that comes later in this positive ordering.  (Further, if the last tile of this cycle is Tile $1$, we get a contribution of $\sigma_3\sigma_3 = 0$. We thus exclude such cycles of length divisible by $3$.) We thus get contributions of $+\sigma_3 \theta_1$ for cycles stretching from Tile $1$ to Tile $1$ and $+\sigma_3 \theta_2$ for cycles stretching from Tile $1$ to Tile $2$. 

On the other hand, a cycle whose leftmost triangle is $\theta_1$ in Tile $2$ will end with a rightmost triangle that comes earlier in this positive ordering.  We also again exclude such cycles of length divisible by $3$ to avoid contributions of $\theta_1\theta_1=0$.  Consequently, a cycle stretching from Tile $2$ to Tile $2$ contributes $-\theta_1\theta_2 = + \theta_2\theta_1$ while a cycle stretching from Tile $2$ to Tile $3$ contributes $-\theta_1\sigma_3 = + \sigma_3\theta_1$.  

Analogously, a cycle whose leftmost triangle is $\theta_2$ in Tile $3$ will end with a rightmost triangle that comes earlier in this positive ordering.  As above, we exclude such cycles of length divisible by $3$ to avoid contributions of $\theta_2\theta_2=0$.  Thus, a cycle stretching from Tile $3$ to Tile $1$ contributes $-\theta_2\theta_1$ while a cycle stretching from Tile $3$ to Tile $3$ contributes $-\theta_2\sigma_3 = + \sigma_3\theta_2$. 

Notice that in particular, we get positive coefficients on $\sigma_3\theta_1$, $\sigma_3\theta_2$, and $\theta_2\theta_1$ (with the products in this order) except for the one case of a cycle from Tile $3$ to Tile $1$, as desired.

Lastly, any double dimer cover containing more than one cycle will contains a degree four product of odd invariants.  But since we only have three odd invariants possible in this example, two of these must coincide and thus such terms vanish.
\end{proof}

\begin{figure}
\begin{tikzpicture}[scale=0.3,every node/.style={sloped,allow upside down}]
    \node [circle,fill=black,inner sep=1pt] (0) at (0, 0) {};
    \node [circle,fill=black,inner sep=1pt] (1) at (5, 0) {};
    \node [circle,fill=black,inner sep=1pt] (2) at (10, 0) {};
    \node [circle,fill=black,inner sep=1pt] (3) at (0, 5) {};    
    \node [circle,fill=black,inner sep=1pt] (4) at (5, 5) {};
    \node [circle,fill=black,inner sep=1pt] (5) at (10, 5) {};
    \node [circle,fill=black,inner sep=1pt] (6) at (15, 5) {};
    \node [circle,fill=black,inner sep=1pt] (7) at (5, 10) {};    
    \node [circle,fill=black,inner sep=1pt] (8) at (10, 10) {};
    \node [circle,fill=black,inner sep=1pt] (9) at (15, 10) {};    

    \node [circle,fill=black,inner sep=1pt] (10) at (20, 5) {};
    \node [circle,fill=black,inner sep=1pt] (11) at (20, 10) {};

    \draw [style=blue] (0) to (2);
    \draw [style=blue] (3) to (10);
    \draw [style=blue] (7) to (11);

    \draw [style=blue] (0) to (3);
    \draw [style=blue] (1) to (7);       
    \draw [style=blue] (2) to (8);   
    \draw [style=blue] (6) to (9);
    \draw [style=blue] (10) to (11);           

    \node [] (28) at (2.5, 2.5) {$1$};
    \node [] (29) at (7.5, 2.5) {$2$};
    \node [] (30) at (7.5, 7.5) {$3$};
    \node [] (31) at (12.5, 7.5) {$1$};
    \node [] (32) at (17.5, 7.5) {$2$};

    \draw [style=blue,dotted] (1) to (3);
    \draw [style=blue,dotted] (2) to (4);
    \draw [style=blue,dotted] (5) to (7);
    \draw [style=blue,dotted] (6) to (8);      
    \draw [style=blue,dotted] (10) to (9);        
          
    \node [] (28) at (1, 1) {$(6)$};
    \node [] (29) at (6, 1) {$(1)$};
    \node [] (30) at (6, 6) {$(2)$};
    \node [] (31) at (11, 6) {$(5)$};

    \node [] (28) at (4, 4) {$(1)$};
    \node [] (29) at (9, 4) {$(2)$};
    \node [] (30) at (9, 9) {$(5)$};
    \node [] (31) at (14, 9) {$(3)$};          

    \node [] (32) at (16, 6) {$(3)$};
    \node [] (33) at (19, 9) {$(4)$};

\end{tikzpicture} 
 \hspace{2em}
\begin{tikzpicture}[scale=0.3,every node/.style={sloped,allow upside down}]

    \node [circle,fill=black,inner sep=1pt] (0) at (0, 0) {};
    \node [circle,fill=black,inner sep=1pt] (1) at (5, 0) {};
    \node [circle,fill=black,inner sep=1pt] (2) at (10, 0) {};
    \node [circle,fill=black,inner sep=1pt] (3) at (0, 5) {};    
    \node [circle,fill=black,inner sep=1pt] (4) at (5, 5) {};
    \node [circle,fill=black,inner sep=1pt] (5) at (10, 5) {};
    \node [circle,fill=black,inner sep=1pt] (6) at (15, 5) {};
    \node [circle,fill=black,inner sep=1pt] (7) at (5, 10) {};    
    \node [circle,fill=black,inner sep=1pt] (8) at (10, 10) {};
    \node [circle,fill=black,inner sep=1pt] (9) at (15, 10) {};    

    \node [circle,fill=black,inner sep=1pt] (10) at (20, 5) {};
    \node [circle,fill=black,inner sep=1pt] (11) at (20, 10) {};    

    \node [circle,fill=black,inner sep=1pt] (12) at (15, 15) {};
    \node [circle,fill=black,inner sep=1pt] (13) at (20, 15) {};    
    
    \node [circle,fill=black,inner sep=1pt] (14) at (25, 10) {};
    \node [circle,fill=black,inner sep=1pt] (15) at (25, 15) {};

    \draw [style=blue] (0) to (2);
    \draw [style=blue] (3) to (10);
    \draw [style=blue] (7) to (14);
    \draw [style=blue] (12) to (15);

    \draw [style=blue] (0) to (3);
    \draw [style=blue] (1) to (7);       
    \draw [style=blue] (2) to (8);   
    \draw [style=blue] (6) to (9);
    \draw [style=blue] (10) to (11);          
    \draw [style=blue] (9) to (12);
    \draw [style=blue] (11) to (13);
    \draw [style=blue] (14) to (15);     

    \node [] (28) at (2.5, 2.5) {$1$};
    \node [] (29) at (7.5, 2.5) {$2$};
    \node [] (30) at (7.5, 7.5) {$3$};
    \node [] (31) at (12.5, 7.5) {$1$};
    \node [] (32) at (17.5, 7.5) {$2$};
    \node [] (33) at (17.5, 12.5) {$3$};
    \node [] (34) at (22.5, 12.5) {$1$};

    \draw [style=blue,dotted] (1) to (3);
    \draw [style=blue,dotted] (2) to (4);
    \draw [style=blue,dotted] (5) to (7);
    \draw [style=blue,dotted] (6) to (8);      
    \draw [style=blue,dotted] (10) to (9);        
    \draw [style=blue,dotted] (11) to (12);        
    \draw [style=blue,dotted] (14) to (13);

    \node [] (28) at (1, 1) {$(8)$};
    \node [] (29) at (6, 1) {$(1)$};
    \node [] (30) at (6, 6) {$(2)$};
    \node [] (31) at (11, 6) {$(7)$};

    \node [] (28) at (4, 4) {$(1)$};
    \node [] (29) at (9, 4) {$(2)$};
    \node [] (30) at (9, 9) {$(7)$};
    \node [] (31) at (14, 9) {$(3)$};          

    \node [] (32) at (16, 6) {$(3)$};
    \node [] (33) at (19, 9) {$(4)$};
    
    \node [] (32) at (16, 11) {$(4)$};
    \node [] (33) at (19, 14) {$(6)$};

    \node [] (32) at (21, 11) {$(6)$};
    \node [] (33) at (24, 14) {$(5)$};
    
\end{tikzpicture}  
$\cdots$
\caption{Positive Ordering on Snake Graphs $G_7(1,2)$ and $G_8(1,2)$ where $(i) > (j)$ signifies $i$ comes before $j$, i.e. $\mu_i\mu_j$ is the order with the positive sign.  Compare with Figure \ref{fig:Ann21}.}
\label{fig:Ann21b}

\end{figure}
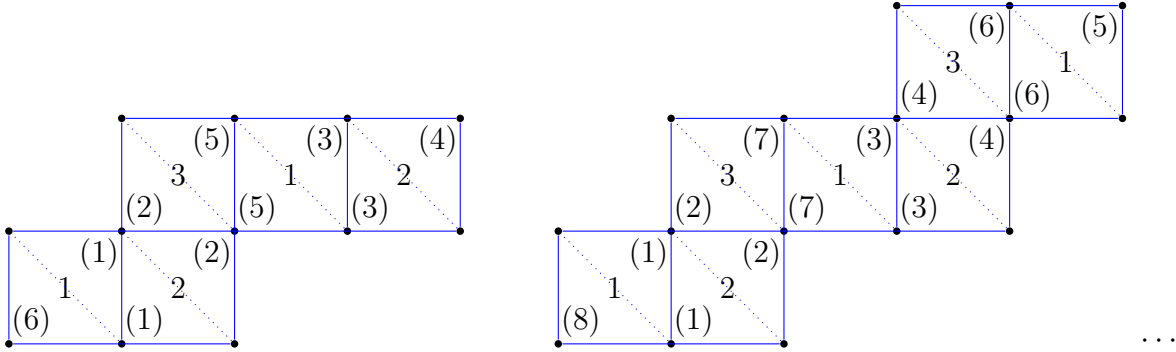

\begin{remark}
If we consider combinatorial interpretations for super $\lambda$-lengths $x_n$ for cases when $q\geq 3$, we would have four or more $\mu$-invariants.  While most of the proof could be adapted in such cases, the last line would not apply, and we thus would need to include contributions from double dimer covers with two or more cycles.
\end{remark}

\begin{remark}
We note that the number of odd elements ($\mu$-invariants) appearing in the coordinate ring for decorated super Teichm\"uller space is determined by the number of triangles in such a marked surface and whenever such a coordinate ring admits at least four distinct $\mu$-invariants, we may have terms of degree $\geq 4$ expressed in terms of $\mu$-invariants.  Otherwise, if only $2$ or $3$ $\mu$-invariants are admitted, we only have terms in the body (no $\mu$-invariants) or terms in the soul that are degree $2$ in terms of $\mu$-invariants.  The main examples mentioned or studied in this paper previously: Super Fibonacci Numbers (corresponding to annuli with two marked points) and Super Markov Numbers (corresponding to the once-punctured torus) both admit only $2$ $\mu$-invariants.  

  \vspace{0.5em}

Thanks to the work of \cite{pz_19}, Penner and Zeitlin imply that the super-dimension of decorated super Teichm\"uller space is  $(6g - 6 + 3s + c \bigg | 4g - 4 + 2s + c)$ where $g$ is the genus of the surface, $s$ is the number of 
 boundary components and punctures (combined) and $c$ is the number of marked points on boundary components.
 
  \vspace{0.5em}
 
 Thus the only surfaces with an odd dimension of $2$, i.e. $\sigma \theta = \epsilon$, are the thrice-punctured sphere, quadrilateral, once-punctured digon, annulus with one marked point on each boundary, and the once-punctured torus.  
 
  \vspace{0.5em}
 
 The only surfaces with an odd dimension of $3$, and thus the last case of $\sigma \theta = \epsilon_1$, $\sigma \nu = \epsilon_2$, and $\theta \nu = \epsilon_3$ such that $\epsilon_i\epsilon_j = 0$ for any $i$ and $j$ are the pentagon, once-punctured triangle, annulus with one marked point on one boundary two on the boundary, or the torus with one boundary and one marked point.

  \vspace{0.5em}

Consequently, between the author's previous work with N. Ovenhouse and S. Zhang focusing on all polygons (including the quadrilateral and pentagon) as well as the above work of this current paper on the once-punctured torus and annuli with two or three marked points in total, exhausts almost all such examples of decorated super Teichm\"uller space with odd dimension of two or three.
\end{remark}

\begin{remark}
The remaining cases include the three-punctured sphere, which we exclude for technical reasons\footnote{If one draws the quiver associated to triangulation of the three-punctured sphere, one gets a three-vertex quiver with $2$-cycles between each pair of vertices or equivalently an empty quiver.}, as well as once-punctured polygons, which the author plans to study in future work.  
This last remaining case mentioned here of an unpunctured torus with one boundary and one marked point admits only a single quiver in its mutation class as well, so would be another interesting case for further study as explored in the ordinary case in \cite{lampe2016diophantine}.  This triangulation corresponds to solutions to the $5$-variate Diophantine equation 
 $$ v w z^2 + v^2 y z + x y^2 z + w^2 x z + v w y^2 + w^2 y z + v w x^2 + v^2 x z + x^2 y z = 9  v w x y.$$  
  \end{remark}

\subsection{Other Diophantine equations or Integrable Systems}

Note that the super analogue of the Markov equation and super analogues of integrable systems constructed from Dehn twists of annuli are only two of many possible $\mathbb{Z_2}$-graded analogues of Diophantine equations or of discrete integrable systems.  For example, Y. Gyoda and K. Matsushita \cite{gyoda2022generalization} provide a number of other commutative generalizations of this equation.  P. Lampe also provides additional Diophantine equations that are related to ordinary cluster algebras \cite{lampe2016diophantine}.  These include equations of the form
$$x^2 + y^2 + z^2 + k_1 xy + k_2 yz + k_3 xz = (3+k_1 + k_2 + k_3)xyz$$
for choices of nonnegative integers $k_1,k_2$, and $k_3$, a one-parameter family of Diophantine equations 
$$x^2 + y^4 + z^4 + ky^2z^2 + 2xy^2 + 2xz^2 = (7+k)xy^2z^2$$ defined for any nonnegative integer $k$, as well as the aforementioned 
$5$-variate equation coming from the surface model of the torus cutting out a disk with a single marked point.   L. Bao and F. Li \cite{bao2024study} provide a more thorough treatment of Diophantine equations related to cluster theory and answering some of P. Lampe's conjectures.

\begin{question}
What are super analogues of the Diophantine equations arising in \cite{gyoda2022generalization, lampe2016diophantine, bao2024study}, and how can one construct algebraic formulas using the combinatorics of double dimer covers or otherwise, for their solutions?
\end{question}
 
\subsection{Super Peripheral Arcs and Super Infinite Friezes} 

As another possible future direction, we note that if instead of mutating along the periodic sequence inducing Dehn twists, we can mutate leading to peripheral arcs instead.  Unlike the bridging arcs, there are only ever a finite number of (non-self-intersecting) peripheral arcs for a given annulus.  With one marked point on the inner boundary and $q$ on the outer boundary, there are $q^2 - q$ such peripheral arcs on the outer boundary.  However, by including self-intersecting peripheral arcs as well, one gets an infinite frieze in the spirit of 
\cite{baur2016infinite} and \cite{gunawan2019cluster}.

For the special case of the annulus with one marked point on each boundary, one gets an infinite Super Frieze that includes Super Fibonacci numbers as well as odd variables (multiplied by appropriate square-roots) corresponding to the triangles of the annulus as more and more winding or shearing occurs.  See Figure \ref{fig:InfiniteSF}.  The author thanks Sophie Morier-Genoud for the question inspiring this example.

\begin{figure}
\begin{center}
$\begin{array}{ccccccccccc}
	& &Z_2& & & &Z_4& & & &\\	
	&\textcolor{red}{\theta \sqrt{Z_1Z_2}}& &\textcolor{red}{\sigma' \sqrt{Z_2Z_3}}& &\textcolor{red}{\theta'' \sqrt{Z_3Z_4}}& &\textcolor{red}{\sigma''' \sqrt{Z_4Z_5}}& &\textcolor{red}{\theta^{(iv)} \sqrt{Z_5Z_6}}&\\
	Z_1 & & & &Z_3& & & &Z_5& &\\
	&\textcolor{red}{\sigma\sqrt{Z_1Z_2}}& &\textcolor{red}{\theta'\sqrt{Z_2Z_3}}& &\textcolor{red}{\sigma'' \sqrt{Z_3Z_4}}& &\textcolor{red}{\theta''' \sqrt{Z_4Z_5}}& &\textcolor{red}{\sigma^{(iv)} \sqrt{Z_5Z_6}}&\\
	& &Z_2& & & &Z_4& & & &\\
	&\textcolor{red}{\theta \sqrt{Z_1Z_2}}& &\textcolor{red}{\sigma' \sqrt{Z_2Z_3}}& &\textcolor{red}{\theta'' \sqrt{Z_3Z_4}}& &\textcolor{red}{\sigma''' \sqrt{Z_4Z_5}}& &\textcolor{red}{\theta^{(iv)} \sqrt{Z_5Z_6}}&\\
	Z_1 & & & &Z_3& & & &Z_5& &\\
	&\textcolor{red}{\sigma\sqrt{Z_1Z_2}}& &\textcolor{red}{\theta'\sqrt{Z_2Z_3}}& &\textcolor{red}{\sigma'' \sqrt{Z_3Z_4}}& &\textcolor{red}{\theta''' \sqrt{Z_4Z_5}}& &\textcolor{red}{\sigma^{(iv)} \sqrt{Z_5Z_6}}&\\
	& &Z_2& & & &Z_4& & & &\\
	&\textcolor{red}{\theta \sqrt{Z_1Z_2}}& &\textcolor{red}{\sigma' \sqrt{Z_2Z_3}}& &\textcolor{red}{\theta'' \sqrt{Z_3Z_4}}& &\textcolor{red}{\sigma''' \sqrt{Z_4Z_5}}& &\textcolor{red}{\theta^{(iv)} \sqrt{Z_5Z_6}}&\\
	Z_1 & & & &Z_3& & & &Z_5& &\\
\end{array}$
\end{center}
\caption{An Infinite Super Frieze consisting of Super Fibonacci Numbers $\{Z_n\}$.  See \cite[Sec 11]{moz22} for notation.}
\label{fig:InfiniteSF}
\end{figure}
 
 \begin{question}
 For annuli with $1$ marked point on one boundary and $q$ marked points on the other, can we derive interesting infinite super frieze patterns based on the peripheral arcs for such triangulations?
 \end{question}
 
 \subsection{Additional Open Questions for General Annuli}
 
\begin{question} 
Can we obtain combinatorial formulas for super $\lambda$-lengths or the $\mu$-invariants associated to triangles in annuli with $p$ marked points on one boundary and $q$ on the other?  Or for other initial triangulations even when $p$ still equals $1$?
\end{question}

\begin{question}
Can we obtain combinatorial formulas for super $\lambda$-lengths of closed curves on an annulus, as well as a $\mathbb{Z}_2$-graded analogue of the bangles or bracelets bases?  Do these manifest as conserved quantities of the super-integrable system defined by equations (\ref{eq:x}) - (\ref{eq:thet}) or as double dimer covers of band graphs (following the work of \cite{mw13}) associated to such closed curves?
\end{question}

\bibliographystyle {alpha}
\bibliography {main}

\begin{thebibliography}{MSW11}

\bibitem[Aig15]{aigner2015markov}
Martin Aigner.
\newblock {\em Markov's theorem and 100 years of the uniqueness conjecture}.
\newblock Springer, 2015.

\bibitem[Ber09]{berstel2009combinatorics}
Jean Berstel.
\newblock {\em Combinatorics on words: Christoffel words and repetitions in
  words}, volume~27.
\newblock American Mathematical Soc., 2009.

\bibitem[BL24]{bao2024study}
Leizhen Bao and Fang Li.
\newblock A study on diophantine equations via cluster theory.
\newblock {\em Journal of Algebra}, 639:99--119, 2024.

\bibitem[BPT16]{baur2016infinite}
Karin Baur, Mark~J Parsons, and Manuela Tschabold.
\newblock Infinite friezes.
\newblock {\em European Journal of Combinatorics}, 54:220--237, 2016.

\bibitem[FG06]{fg_06}
Vladimir Fock and Alexander Goncharov.
\newblock Moduli spaces of local systems and higher {T}eichm{\"u}ller theory.
\newblock {\em Publications Math{\'e}matiques de l'IH{\'E}S}, 103:1--211, 2006.

\bibitem[GM23]{gyoda2022generalization}
Yasuaki Gyoda and Kodai Matsushita.
\newblock Generalization of markov diophantine equation via generalized cluster
  algebra.
\newblock {\em The Electronic Journal of Combinatorics}, pages P4--10, 2023.

\bibitem[GMV19]{gunawan2019cluster}
Emily Gunawan, Gregg Musiker, and Hannah Vogel.
\newblock Cluster algebraic interpretation of infinite friezes.
\newblock {\em European Journal of Combinatorics}, 81:22--57, 2019.

\bibitem[HPZ23]{mcshane}
Yi~Huang, Robert~C Penner, and Anton~M Zeitlin.
\newblock Super mcshane identity.
\newblock {\em Journal of Differential Geometry}, 125(3):509--551, 2023.

\bibitem[IPZ18]{ip2018n}
Ivan~CH Ip, Robert~C Penner, and Anton~M Zeitlin.
\newblock $\mathcal{N}=2$ super-{T}eichm{\"u}ller theory.
\newblock {\em Advances in Mathematics}, 336:409--454, 2018.

\bibitem[Lam16]{lampe2016diophantine}
Philipp Lampe.
\newblock Diophantine equations via cluster transformations.
\newblock {\em Journal of Algebra}, 462:320--337, 2016.

\bibitem[McS21]{mcshane2021convexity}
Greg McShane.
\newblock Convexity and aigner's conjectures.
\newblock {\em arXiv preprint arXiv:2101.03316}, 2021.

\bibitem[MOZ21]{moz21}
Gregg Musiker, Nicholas Ovenhouse, and Sylvester~W. Zhang.
\newblock An expansion formula for decorated super-{Teichm\"uller} spaces.
\newblock {\em SIGMA. Symmetry, Integrability and Geometry: Methods and
  Applications}, 17:080, September 2021.

\bibitem[MOZ22]{moz22}
Gregg Musiker, Nicholas Ovenhouse, and Sylvester Zhang.
\newblock Double dimer covers on snake graphs from super cluster expansions.
\newblock {\em Journal of Algebra}, 2022.

\bibitem[MOZ23]{moz22b}
Gregg Musiker, Nicholas Ovenhouse, and Sylvester~W. Zhang.
\newblock Matrix formulae for decorated super {T}eichm\"{u}ller spaces.
\newblock {\em J. Geom. Phys.}, 189:Paper No. 104828, 28, 2023.

\bibitem[MS10]{ms10}
Gregg Musiker and Ralf Schiffler.
\newblock Cluster expansion formulas and perfect matchings.
\newblock {\em Journal of Algebraic Combinatorics}, 32(2):187--209, 2010.

\bibitem[MSW11]{MSW_11}
Gregg Musiker, Ralf Schiffler, and Lauren Williams.
\newblock Positivity for cluster algebras from surfaces.
\newblock {\em Advances in Mathematics}, 227(6):2241--2308, 2011.

\bibitem[MW13]{mw13}
Gregg Musiker and Lauren Williams.
\newblock Matrix formulae and skein relations for cluster algebras from
  surfaces.
\newblock {\em International Mathematics Research Notices},
  2013(13):2891--2944, 2013.

\bibitem[Ovs23]{ovsienko2023shadow}
Valentin Ovsienko.
\newblock Shadow sequences of integers: From fibonacci to markov and back.
\newblock {\em The Mathematical Intelligencer}, 45(1):50--54, 2023.

\bibitem[Pro05]{propp05}
James Propp.
\newblock The combinatorics of frieze patterns and markoff numbers.
\newblock {\em Integers: Electronic Journal of Combinatorial Number Theory},
  20, 2020 (preprint in 2005).

\bibitem[PZ19]{pz_19}
RC~Penner and Anton~M Zeitlin.
\newblock Decorated super-{T}eichm{\"u}ller space.
\newblock {\em Journal of Differential Geometry}, 111(3):527--566, 2019.

\bibitem[RS20]{rabideau2020continued}
Michelle Rabideau and Ralf Schiffler.
\newblock Continued fractions and orderings on the markov numbers.
\newblock {\em Advances in Mathematics}, 370:107231, 2020.

\end{thebibliography}

\end{document}